\newtheorem{theorem}{Theorem}[section]
\newtheorem{example}[section]{Example}
\newtheorem*{theorem*}{Theorem}
\newtheorem{lemma}[theorem]{Lemma}
\newtheorem{proposition}[theorem]{Proposition}
\newtheorem{definition}[theorem]{Definition}
\newtheorem{notation}[theorem]{Notation}
\newtheorem{corollary}[theorem]{Corollary}
\newtheorem{remark}[theorem]{Remark}
\newtheorem{cohftaxiom}{CohFT}
\newtheorem{unitaxiom}{U}
\newcommand{\CC}{{\mathbb{C}}}
\newcommand{\HH}{{\mathbb{H}}}
\newcommand{\PP}{{\mathbb{P}}}
\newcommand{\QQ}{{\mathbb{Q}}}
\newcommand{\ZZ}{{\mathbb{Z}}}
\newcommand{\LL}{{\mathbb{L}}}
\newcommand{\NN}{{\mathbb{N}}}
\newcommand{\Fix}{\mathrm{Fix}}
\newcommand{\Aut}{\mathrm{Aut}}
\newcommand{\bx}{{\bf x}}
\newcommand{\bt}{{\bf t}}
\newcommand{\id}{{\rm id}}
\renewcommand{\Im}{{\rm Im}}
\def\SL2C{ {\rm SL(2,\mathbb C)} }
\newcommand{\M}[2]{ { \overline{\mathcal M}_{#1, #2} } }
\def\A{{\mathcal A}}
\def\C{{\mathcal C}}
\def\E{{\mathcal E}}
\def\F{{\mathcal F}}
\def\H{{\mathcal H}}
\def\L{{\mathcal L}}
\def\calM{{\mathcal M}}
\def\X{{\mathcal X}}
\def\Z{{\mathcal Z}}
\def\W{{\mathcal W}}
\def\rx{{\mathrm x}}
\def\ry{{\mathrm y}}
\def\rz{{\mathrm z}}
\def\rw{{\mathrm w}}
\def\p{\partial }
\begin{document}
\title{6--dimensional FJRW theories of the simple--elliptic singularities}
\begin{abstract}
We give explicitly in the closed formulae the genus zero primary potentials of the three 6--dimensional FJRW theories of the simple--elliptic singularity $\tilde E_7$ with the non--maximal symmetry groups. For each of these FJRW theories we establish the CY/LG correspondence to the Gromov--Witten theory of the elliptic orbifold $[\E/ (\ZZ/2\ZZ)]$ --- the orbifold quotient of the elliptic curve by the hyperelliptic involution. Namely, we give explicitly the Givental's group elements, whose actions on the partition function of the Gromov--Witten theory of $[\E/ (\ZZ/2\ZZ)]$ give up to a linear change of the variables the partition functions of the FJRW theories mentioned. We keep track of the linear changes of the variables needed.
We show that using only the axioms of Fan--Jarvis--Ruan, the genus zero potential can only be reconstructed up to a scaling.
\end{abstract}
\date{\today}

\author{Alexey Basalaev}
\address{Universit\"at Mannheim, Lehrsthul f\"ur Mathematik VI, Seminargeb\"aude A 5, 6, 68131 Mannheim, Germany.}
\address{Current address: Ruprecht-Karls-Universit\"at Heidelberg, Germany}
\email{abasalaev@mathi.uni-heidelberg.de}

\maketitle

\setcounter{tocdepth}{1}
\tableofcontents

\section{Introduction}
  To a quasi--homogeneous polynomial $W$, having an isolated critical point at the origin, and a group $G$ of diagonal symmetries of $W$, 
  FJRW theory associates the certain moduli space together with a virtual fundamental cycle giving rise to a well--defined intersection theory (see \cite{R}).
  First main application of this moduli space was to the \textit{Witten's equation}. This equation, originating from physics, is due to E.~Witten, but it only became mathematically reasonable on this moduli space of the FJRW theory. The name ``FJRW theory'' stands therefore for H.Fan, T.Jarvis and Y.Ruan, who gave the construction (in \cite{FJR}) and for E.Witten, whose idea was a sparkle for it. 

  This new moduli space can be seen as the generalization on the moduli space of the stable curves. From this point of view FJRW theory can be seen as the cousin of the Gromov--Witten theory.
  It was moreover shown in \cite{FJR} that for $W$ defining ADE singularities, and certain symmetry groups $G$, the partition function of the intersection numbers on this moduli space is a tau--function of the Kac--Wakimoto hierarchy. Then for $W = x^{r+1}$ and cycling group $G$, generated by $g(x) := \exp(2\pi \sqrt{-1}/(r+1)) x$, this new moduli space generalizes the moduli space of the $r$--spin curves, whose Gromov--Witten partition function is a tau--function of the Gelfand--Dykij hierarchy (see \cite{FSZ}).
  
  Another important application of the FJRW theories lies in the area of mirror symmetry. In mirror symmetry the pair $(W,G)$ as above is called \textit{Landau--Ginzburg orbifold}, and FJRW theory provides the A--side model of it. Several mirror symmetry results about the FJRW theories were published in \cite{CR,MR,MS,KS,LLSS,SZ2,PS,BP}.
  Establishing these mirror symmetry results one had to compute certain intersection numbers on the moduli space of the FJRW theory. However, the explicit use of the virtual fundamental cycle appeared to be hard. To our knowledge, in all the examples known, FJRW theory is not computed by using the virtual fundamental cycle of Fan--Jarvis--Ruan itself, but only utilizing the certain properties, it satisfies. These properties were derived already in \cite{FJR}, and called there ``axioms''.
  
  These axioms appeared to be powerful enough for the mirror symmetry purposes, where usually there is no need to compute the theory completely. For all mirror symmetry results above except \cite{BP}, just some small list of intersection numbers was computed on the FJRW theory side. In particular up to now there is no closed formula even for the genus zero potential of any FJRW theory except one particular case in loc. cit.. At the same time even in the computation of the certain intersection numbers, only the most extreme possible symmetry groups $G$ are considered up to now, except one particular case in \cite{SZ2}, --- maximal symmetry groups of $W$.

  The results of this paper come in two groups.
  
  \subsection*{FJRW theory}
  In this paper we take the ``axioms'' of \cite{FJR} as a definition of the FJRW theory. 
  Namely, we consider the FJRW theory as a Cohomological field theory, satisfying certain additional list of axioms.
  We consider the simple--elliptic singularity $\tilde E_7$ represented by $W := x^4 + y^4 + z^2$ with the three symmetry groups:
  \begin{align*}
    G_1 &:= \langle a_1,b_1,c_1 \rangle, \ &&a_1(x,y,z) := \left(\sqrt{-1}x, \sqrt{-1}y, z \right), \ b_1(x,y,z) := (x,-y,z),
    \\
    & && c_1(x,y,z) := (x,y,-z),
    \\
    G_2 &:= \langle a_2,b_2\rangle, \ &&a_2(x,y,z)  := \left(\sqrt{-1}x, \sqrt{-1}y, -z \right), \ b_2(x,y,z):= (x,-y,z),
    \\
    G_3 &:= \langle a_3,b_3\rangle, \ &&a_3(x,y,z) : = \left(\sqrt{-1}x, \sqrt{-1}y, z \right), \ b_3(x.y,z) := (x,y,-z),
  \end{align*}
  All these groups are not maximal for $W$, and this is the first novelty of this paper. All three FJRW theories of $(\tilde E_7,G_k)$ are 6--dimensional.  By using the ``axioms'' of \cite{FJR} only, we reconstruct the genus zero potentials of these FJRW theories up to the scaling of the variables. We give the closed formulae for the three genus zero potentials (see Propositions~\ref{proposition: primary potential of W,G_1}, \ref{proposition: primary potential of W,G_2} and \ref{proposition: primary potential E,G_3}). It turns out that two of these genus zero potentials can be reconstructed from the axioms only up to the scaling. This shows in particular that for the questions, where the particular values of the correlators are important, it's not enough to consider the axioms of FJRW theory only. It turns out also that the third genus zero potential we compute has irrational coefficients. This potential can be written in $\QQ[[\bt]]$ only after a rescaling of the variables.
  
  \subsection*{CY/LG correspondence} 
  Currently, working with the non--maximal symmetry groups on the FJRW theory side makes it hard to speak about the mirror symmetry. This is because the B side should be considered with the non--trivial symmetry group then, and an orbifolded Saito theory is not yet constructed (see \cite{BTW1, BTW2}). 
  However one could anyway consider one mirror symmetry conjecture in this setting too --- the CY/LG correspondence conjecture. It suggests that the partition functions of the two different A--side models, being both mirror dual to the same B--model, are connected by a Givental's action (acting on the space of all partition functions). 
  
  In this paper for the three FJRW theories of the pairs $(\tilde E_7,G_k)$ as above we establish also the CY/LG correspondence. Namely, we provide explicitly the R--matrices of Givental, s.t. up to the certain S--action of Givental the partition function of the FJRW theory is obtained by applying the Givental's action to the partition function of the Gromov--Witten theory of the orbifold $\PP^1_{2,2,2,2} := \left[\E \big/ (\ZZ/2\ZZ\right)]$ --- the orbifold quotient of the elliptic curve by the hyperelliptic involution.
  
  \begin{theorem*}[Theorem~\ref{theorem:main} in the text]
    Up to the certain different Givental's S--actions $S^{(k)}$ the partition functions of the FJRW theories $(\tilde E_7,G_k)$, $k=1,2,3$ are connected to the partition function of the Gromov--Witten theory of $\PP^1_{2,2,2,2}$ by the same Givental's R--action of:
    $$
      R^{\sigma^\prime} := 
	\exp( \left(
	\begin{array}{c c c}
	  0 & \dots & \sigma' \\
	  \vdots & 0 & \vdots \\
	  0 & \dots & 0
	\end{array}
      \right) z), \quad \text{ for } \quad \sigma' = -\frac{1}{2\pi^2}\left(\Gamma(\frac{3}{4})\right)^4,
    $$
    so that holds:
    $$
      {\mathcal Z}^{(\tilde E_7, G_k)} =  \hat R^{\sigma'}  \cdot \hat S^{(k)} \cdot \Z^{\PP^1_{2,2,2,2}}, \quad k=1,2,3.
    $$
  \end{theorem*}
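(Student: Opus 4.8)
The plan is to work inside Givental's symplectic loop--group formalism and to reduce the claimed all--genus identity to a genus--zero comparison together with a single application of Teleman's reconstruction theorem. All four objects are CohFTs of rank $6$: the Gromov--Witten theory of $\PP^1_{2,2,2,2}$ and the three FJRW theories $(\tilde E_7,G_k)$. Their genus--zero parts make the $6$--dimensional state spaces into Frobenius manifolds which, by the explicit primary potentials computed in Propositions~\ref{proposition: primary potential of W,G_1}, \ref{proposition: primary potential of W,G_2} and \ref{proposition: primary potential E,G_3}, are semisimple away from a discriminant. For a generically semisimple CohFT Teleman's theorem expresses the total descendant potential $\Z$ as the quantization $\hat R\,\hat S$ applied to a product of $6$ Witten--Kontsevich tau--functions, where $S(z)=\id+S_1 z^{-1}+\cdots$ is the calibration (a fundamental solution of the Dubrovin connection) and $R(z)=\id+R_1 z+\cdots$ is the upper--triangular $R$--matrix fixed at a semisimple point by the asymptotics of that connection. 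Writing each partition function in this form, the identity $\Z^{(\tilde E_7,G_k)}=\hat R^{\sigma'}\hat S^{(k)}\Z^{\PP^1_{2,2,2,2}}$ reduces to the statement that, after an explicit linear identification of state spaces, the two calibrations differ by $\hat S^{(k)}$ while the two $R$--matrices differ by the single nilpotent factor $\hat R^{\sigma'}$, uniformly in $k$.

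First I would fix, for each $k$, a linear isomorphism between the state space of $(\tilde E_7,G_k)$ and that of $\PP^1_{2,2,2,2}$ matching units and Poincar\'e pairings; this is the ``linear change of variables'' of the abstract, and it is pinned down by comparing the explicit FJRW primary potentials with the known genus--zero potential of the elliptic orbifold. Having matched the Frobenius products, I would read off on each side the calibration as the normalized fundamental solution of the respective Dubrovin connection at the chosen base point; the ratio of the two solutions is exactly the sought $S$--transformation $\hat S^{(k)}$. Because the three FJRW state spaces carry different $G_k$--gradings, the flat coordinates and hence the three calibrations $S^{(k)}$ genuinely differ, which is why the $S$--action in the statement is permitted to depend on $k$.

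The decisive point is the comparison of the $R$--matrices, and here the main structural input is that all four Frobenius manifolds are analytic continuations of one and the same ($\tilde E_7$) Saito Frobenius manifold. Consequently the Dubrovin connection, and therefore the $R$--matrix produced by Teleman's construction at a common semisimple point, is a feature of the underlying geometry alone and does not feel the $G_k$--grading; this is precisely what forces the same $R^{\sigma'}$ for $k=1,2,3$. The only remaining freedom is the choice of opposite subspace/calibration relating the Gromov--Witten frame to the Landau--Ginzburg frame, and this freedom is a symplectic transformation computed by the period of the primitive form of $\tilde E_7$ at the relevant square--lattice (lemniscatic) elliptic point. I would evaluate this period as an oscillatory integral and show that it contributes the single off--diagonal entry $\sigma'=-\tfrac{1}{2\pi^2}\bigl(\Gamma(\tfrac34)\bigr)^4$, every other entry being killed by the quasi--homogeneous weight grading of $W=x^4+y^4+z^2$; the vanishing of all but the corner entry is what makes the matrix $N$ with $R^{\sigma'}=\exp(Nz)$ satisfy $N^2=0$.

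It then remains to assemble the pieces: since Givental's quantized operators compose and the product of Witten--Kontsevich tau--functions is common to all four theories, the genus--zero matching up to $\hat S^{(k)}$ and the $R$--matrix matching up to $\hat R^{\sigma'}$ combine into the desired equality of total potentials at every genus. I expect the evaluation of $\sigma'$ to be the principal obstacle: verifying that the relevant period at the square--lattice elliptic curve is exactly $-\tfrac{1}{2\pi^2}\Gamma(\tfrac34)^4$, and that the weight grading annihilates every competing entry of the $R$--matrix, is the delicate transcendental computation on which the whole comparison rests. As an independent check I would verify that both sides transform identically under the modular group acting on the elliptic parameter, so that agreement of finitely many genus--zero correlators together with the single constant $\sigma'$ already propagates to the full all--genus identity.
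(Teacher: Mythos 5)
Your overall architecture (genus--zero matching plus Teleman's reconstruction at a semisimple point, with the $S$--actions absorbing the $k$--dependent linear identifications of state spaces) agrees with the paper's endgame. But the decisive middle step --- producing the single $R$--matrix $R^{\sigma'}$ and showing it is the \emph{same} for $k=1,2,3$ --- rests in your proposal on two inputs that are not available. First, you claim that all four Frobenius manifolds are analytic continuations of one and the same $\tilde E_7$ Saito Frobenius manifold, so that the $R$--matrix ``does not feel the $G_k$--grading.'' This cannot be invoked: the Saito Frobenius manifold of $\tilde E_7$ has rank $\mu=9$, whereas the three FJRW theories and $\PP^1_{2,2,2,2}$ are all $6$--dimensional; the object you would actually need is an \emph{orbifolded} Saito theory for $(\tilde E_7, G_k)$, which the paper explicitly points out has not been constructed. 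Moreover, the quantity in the theorem is not the Teleman $R$--matrix of either theory but the \emph{ratio} relating the two, and nothing in your argument computes that ratio or explains why it is independent of $k$. In the paper this uniformity is an outcome of explicit computation, not a structural a priori: each $F_0^{(\tilde E_7,G_k)}$ is shown (Propositions~\ref{proposition: G_1 explicit}, \ref{proposition: G_2 explicit}, \ref{proposition: G_3 explicit}) to equal $A^{G_k}\cdot F_0^{\PP^1_{2,2,2,2}}$ for three \emph{different} matrices $A^{G_k}\in\SL2C$ --- obtained by writing the FJRW potentials via the Halphen--system solutions $X_a^{(\tau_0,\omega_0)}$ and matching them against $\A^{(\tau,\omega)}\cdot F_{an}^{\PP^1_{2,2,2,2}}$ --- and only then does Theorem~\ref{theorem: SL2C to Givental} convert each $\SL2C$--action into $\hat R^{\sigma'}\cdot\hat S^{(k)}$ with $\sigma'=-c/(c\tau+d)$ coming out equal in all three cases.

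Second, you defer the evaluation of $\sigma'=-\tfrac{1}{2\pi^2}\Gamma(\tfrac34)^4$ to ``a period of the primitive form at the lemniscatic point, computed as an oscillatory integral,'' and you yourself flag this as the principal obstacle. In the paper no period integral is computed: $\sigma'$ is pure arithmetic in the entries of the matrices $A^{G_k}$ (whose transcendental content $\Theta=\sqrt{2\pi}/\Gamma(\tfrac34)^2$ is inherited from the already--established $G_{max}$ correspondence of Theorem~\ref{theorem: CYLG Gmax}). Your claim that the weight grading of $W$ kills every entry of the exponent except the corner one is also unsupported; in the paper the nilpotent shape of $R^{\sigma'}$ is not something to be proved about a general Teleman $R$--matrix but is built into the statement of Theorem~\ref{theorem: SL2C to Givental}. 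As written, your proposal replaces the paper's main computational content (Section~\ref{section: proofs} together with the $\SL2C$--to--Givental dictionary) with an appeal to structures that either do not exist or are left as unexecuted transcendental computations, so the argument has a genuine gap.
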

  The S--actions are usually considered to be of little importance because they only stand for the shift of coordinates and a basis choice (in the Chen--Ruan cohomology ring in our case), and hence do not affect ``the geometry'' of the Cohomological field theory. 
  However no explicit computation can be done without knowing these S--actions. Due to this fact we also keep track of them in this paper.

  For the simple--elliptic singularities, CY/LG correspondence conjecture was also considered in \cite{SZ2} in a beautiful manner. It was explained there in terms of a natural operation on the space of quasi--modular forms --- Cayley transform. However \cite{SZ2} didn't derive this particular R--action of Givental giving the CY/LG correspondence or establish the particular Cayley transform. It was first \cite{BP}, where the explicit R--action was given for the simple--elliptic singularities, but with the maximal symmetry group only.
  
  The proof of the theorem uses extensively the explicit formulae for the genus zero potentials of $\PP^1_{4,4,2}$, $\PP^1_{2,2,2,2}$ Gromov--Witten theories and explicitly computed FJRW theories of $(\tilde E_7,G_k)$. We utilize the fact that genus zero potentials of both Gromov--Witten theories can be written via the quasi--modular forms. At the same time, even missing the orbifolded Saito theory, we consider the certain $\SL2C$--action on the space of WDVV equation solutions, that allows us to connect the genus zero partition functions of $\PP^1_{2,2,2,2}$ and $(\tilde E_7,G_k)$. This action was proposed in \cite{BT} as a model for the primitive form change for the Saito theory and was shown to be equivalent to the particular Givental's action in \cite{B}.
  
  \subsection*{Organization of the paper} In Section~\ref{section: FJRW} we define the FJRW theory as a CohFT, subject to the certain list of additional axioms. Gromov--Witten theory of elliptic orbifolds is reviewed in Section~\ref{section: GW theory}. We make certain preparations there, needed to perform the computations. In Section~\ref{section: group actions} we define the group action on the space of CohFTs. Section~\ref{section: CYLG} is devoted to the CY/LG correspondence, where we give the proof of the main theorem with the help of computations, performed in Section~\ref{section: proofs}. This is the last section too, where we give explicit formulae for the primary potentials of the FJRW theories of $(\tilde E_7, G_k)$, $k=1,2,3$ as above --- see Propositions~\ref{proposition: primary potential of W,G_1}, \ref{proposition: primary potential of W,G_2} and \ref{proposition: primary potential E,G_3}. Certain useful formulae are given in Appendix.
  
  \subsection*{Acknowledgement}
  The work of A.B. was partially supported by the DFG grant He2287/4--1 (SISYPH). The author is also grateful to Nathan Priddis, Amanda Francis and Yefeng Shen for the useful discussions and email correspondence.
 
\section{FJRW theory}\label{section: FJRW}

In this section we define the FJRW theory axiomatically as a Cohomological field theory $\Lambda^{(W,G)}$, satisfying some additional system of axioms, as given in Theorem~4.1.8 of \cite{FJR}. In this way all our conclusions hold true for the FJRW theories of $(W,G)$, defined through the virtual fundamental cycle. At the same time it's important to note that to our knowledge almost all computations done up to now in FJRW theories only use these ``axioms'' of \cite{FJR}.

\subsection{The pair $(W,G)$}\label{section: FJRW notations}

  Throughout this paper let $W = W(\bx) = W(x_1,\dots,x_N) \in \CC[\bx]$ be a quasi--homogeneous polynomial. Namely there are integers $d,w_1,\dots,w_N$, s.t. $\gcd(w_1,\dots,w_N) = 1$,
  and for any $\lambda \in \CC^*$ holds $W(\lambda^{w_1}x_1,\dots,\lambda^{w_N}x_N) = \lambda^d W(x_1,\dots,x_N)$. Denote $q_k := w_k/d$ for $k=1,\dots,N$.
  Assume also $0 \in \CC^N$ to be an isolated critical point of $W$ and the weight set to be unique.
  
  Let $G_W := \{\alpha \in (\CC^*)^N \mid W(\alpha \cdot \bx) = W(\bx)\}$ be the so--called {\it maximal group of symmetries} of $W$ (or just $G_{max}$ is the polynomial is clear from the context).
  It's non--empty as $W$ is quasihomogeneous. Denote $e[\alpha] := \exp(2 \pi \sqrt{-1} \alpha)$ for any $\alpha \in \QQ$. 
  Then for ${J:=(e[q_1],\dots,e[q_N])}$, the group $\langle J \rangle$  is a non--empty subgroup of~$G_W$.

  The group $G \subseteq G_W$ is called {\it admissible} if $\langle J \rangle \subseteq G$. 
  In what follows, we will assume $d$, the degree of $W$, to be also the exponent of $G_W$, i.e. for each $h\in G_W$, $h^d=\id$. 
  This is not the case in general, but holds in our examples.

\subsection{Cohomological field theories}\label{subsection:cohft axioms}
Let $(V,\eta)$ be a finite--dimensional vector space with a non--degenerate pairing.
Consider a system of linear maps 
\[
\Lambda_{g,n}: V^{\otimes n} \rightarrow H^*(\M{g}{n}),
\] 
defined for all $g,n$ such that $\M{g}{n}$ exists and is non--empty.
The set $\Lambda_{g,n}$ is called a \emph{cohomological field theory on $(V,\eta)$}, or CohFT, if it satisfies the following axioms.

\begin{cohftaxiom}
$\Lambda_{g,n}$ is equivariant with respect to the $S_n$--action, permuting the factors in the tensor product and the numbering of marked points in $\M{g}{n}$.
\end{cohftaxiom}

\begin{cohftaxiom}
For the gluing morphism $\rho: \M{g_1}{n_1+1} \times \M{g_2}{n_2+1} \rightarrow \M{g_1+g_2}{n_1+n_2}$ we have:
\[
\rho^* \Lambda_{g_1+g_2,n_1+n_2} = (\Lambda_{g_1, n_1+1} \cdot \Lambda_{g_2,n_2+1}, \eta^{-1}),
\]
where we contract with $\eta^{-1}$ the factors of $V$ that correspond to the node in the preimage of~$\rho$.
\end{cohftaxiom}
      
\begin{cohftaxiom}
For the gluing morphism $\sigma: \M{g}{n+2} \rightarrow \M{g+1}{n}$ we have:
\[
\sigma^* \Lambda_{g+1,n} = (\Lambda_{g, n+2}, \eta^{-1}),
\]
where we contract with $\eta^{-1}$ the factors of $V$ that correspond to the node in the preimage of~$\sigma$.
\end{cohftaxiom}
    
In this paper we further assume the CohFT $\Lambda_{g,n}$ to be \emph{unital} --- i.e. there is a fixed vector $\textbf 1 \in V$ called the \emph{unit} such that the following axioms are satisfied.
  
\begin{unitaxiom} For every $\alpha_1,\alpha_2 \in V$ we have: $\eta(\alpha_1, \alpha_2) = \Lambda_{0,3}(\textbf{1} \otimes \alpha_1 \otimes \alpha_2)$.
\end{unitaxiom}

\begin{unitaxiom} Let $\pi: \M{g}{n+1} \rightarrow \M{g}{n}$ be the map forgetting the last marking, then:
\[
\pi^* \Lambda_{g,n}(\alpha_1 \otimes \dots \otimes \alpha_n) = \Lambda_{g,n+1}(\alpha_1 \otimes \dots \otimes \alpha_n \otimes \textbf{1}).
\]
\end{unitaxiom}
A CohFT $\Lambda_{g,n}$ on $(V,\eta)$ is called \emph{quasihomogeneous} if the vector space $V$ is graded by a linear map $\deg: V \to \QQ$ and there is a number $\delta$, such that for any $\alpha_1, \dots, \alpha_n \in V$ holds:
$$
  \left( (g-1)\delta + n \right)\Lambda_{g,n}(\alpha_1,\dots,\alpha_n) = \left( \frac{1}{2}\deg_{coh} + \sum_k \deg (\alpha_k) \right) \Lambda_{g,n}(\alpha_1,\dots,\alpha_n),
$$
where $\deg_{coh}$ is the (real) $H^*(\M{g}{n})$--cohomology class degree.

 Let $\psi_i \in H^2(\M{g}{n})$,  $1 \le i \le n$ be the so--called psi--classes. The genus $g$, $n$--point correlators of the CohFT are the following numbers:
 \[
  \langle \tau_{a_1}(e_{\alpha_1}) \dots \tau_{a_n}(e_{\alpha_n}) \rangle_{g,n}^\Lambda := \int_{\M{g}{n}} \Lambda_{g,n}(e_{\alpha_1} \otimes \dots \otimes e_{\alpha_n}) \psi_1^{a_1} \dots \psi_n^{a_n}.
 \]
 Denote by $\F_g$ the generating function of the genus $g$ correlators, called genus $g$ potential of the CohFT:
 \[
 \F_g := \sum_{\boldsymbol \alpha, \bf a} \frac{\langle \tau_{a_1}(e_{\alpha_1}) \dots \tau_{a_n}(e_{\alpha_n}) \rangle_{g,n}^\Lambda}{\Aut( \{ \boldsymbol \alpha, \bf a \})} \ t^{a_1,\alpha_1} \dots t^{a_n,\alpha_n}.
 \]
 It is useful to assemble the correlators into a generating function called partition function of the CohFT
 $\Z := \exp \left( \sum\nolimits_{g \ge 0} \hbar^{g-1} \mathcal F_g \right)$.
 We will also make use of the so--called \textit{primary} genus $g$ potential that is a function of the finite number of variables $t^\alpha:=t^{0,\alpha}$ defined as follows:
 \[
   F_g := \F_g \mid_{t^\alpha := t^{0,\alpha}, \ t^{\ell,\alpha} = 0, \forall \ell \ge 1}
 \]
 what is also sometimes called a \textit{restriction to the small phase space}.
   
  Due to some topological properties of $\M{0}{n}$, the small phase space potential of a CohFT on $(V,\eta)$ satisfies the so--called WDVV equation. For any four fixed $1 \le i,j,k,l \le \dim(V)$ holds:
  \begin{equation}\label{eq: wdvv}
    \sum_{p,q = 1}^{\dim(V)} \frac{\partial^3 F_0}{\partial t^i \partial t^j \partial ^p} \eta^{p,q} \frac{\partial^3 F_0}{\partial t^q \partial t^k \partial^l} = \sum_{p,q = 1}^{\dim(V)} \frac{\partial^3 F_0}{\partial t^i \partial t^k \partial ^p} \eta^{p,q} \frac{\partial^3 F_0}{\partial t^q \partial t^j \partial^l}.
  \end{equation}
  It's important to note that function $\F_0$ is reconstructed unambiguously from $F_0$ due to the \textit{topological recursion relation in genus zero}. Hence function $F_0$ contains all genus zero information of the CohFT.
  
  \subsection{Moduli of W--curves}

  An \emph{$n$--pointed orbifold curve} $\C$ is a 1--dimensional Deligne--Mumford stack with at worst nodal singularities with orbifold structure only at the marked points and the nodes. Moreover the orbifold structure is required to be \emph{balanced} at the nodes.

  A $d$--stable curve is a proper connected orbifold curve $\C$ of genus $g$ with $n$ distinct smooth markings $p_1,\dots,p_n$ such that the  $n$--pointed underlying coarse curve is stable, and all the stabilizers at nodes and markings have order $d$. 
  The moduli stack $\overline{\calM}_{g,n,d}$ parameterizing such curves is proper, smooth and has dimension $3g-3+n$. It differs from the moduli space of curves only because of the stabilizers over the normal crossings.

  Let $W$ be written as 
  $$
    \displaystyle W = \sum_{i=1}^M c_i \prod_{k=1}^N x_k^{a_{ik}}, \quad a_{ik}\in \NN, \ c_i\in \CC.
  $$
  Given line bundles $\L_1, \ldots , \L_N$ on the $d$--stable curve $\C$, we define the line bundle 
  \[
    \W_i(\L_1,\dots,\L_N) := \bigotimes_{k=1}^N\L_k^{\otimes a_{ik}}, \quad 1 \le i \le M.
  \] 

  \begin{definition}A \emph{$W$--structure} is the data 
  $
  (\C, p_1,\dots, p_n, \L_1,\dots,\L_N,\varphi_1,\dots \varphi_N),
  $
  where $\C$ is an $n$--pointed $d$--stable curve, the $\L_k$ are line bundles on $\C$ satisfying 
  \[
  \W_i(\L_1,\dots,\L_N)\cong \omega_{\log} = \omega(p_1 + \dots + p_n),
  \] 
  and for each $k$, $\varphi_k:\L_k^{\otimes d}\to \omega_{\log}^{w_k}$ is an isomorphism of line bundles. 
  \end{definition}
  When $G = G_W$, the following theorem holds.
  \begin{theorem}[Fan--Jarvis--Ruan, \cite{FJR}]
    There exists a moduli stack of all $W$--structures, denoted by $\W_{g,n,G_W}(W)$, possessing also the suitable virtual fundamental cycle $[\W_{g,n,G_W}(W)]^{vir}$, defining the CohFT of the pair $(W,G_W)$ by the morphism $\mathrm{st}: \W_{g,n,G_W}(W) \to \M{g}{n}$, forgetting the $W$--structure of a curve.
  \end{theorem}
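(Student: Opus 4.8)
The plan is to build the theory in three stages: construct the moduli stack $\W_{g,n,G_W}(W)$, equip it with a virtual fundamental cycle, and verify that the resulting classes satisfy the CohFT axioms. The state space $V$ here should be the FJRW state space, graded by the sectors indexed by the elements of $G_W$ (the $G_W$-invariant parts of the Milnor rings of the fixed loci), with the pairing $\eta$ coming from residue pairings on these sectors.

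First I would construct $\W_{g,n,G_W}(W)$ as a stack over the moduli of $d$--stable curves $\overline{\calM}_{g,n,d}$. Over a fixed curve $\C$ a $W$--structure is the data of the line bundles $\L_1,\dots,\L_N$ together with the isomorphisms $\W_i(\L_1,\dots,\L_N)\cong \omega_{\log}$ and the maps $\varphi_k\colon \L_k^{\otimes d}\to \omega_{\log}^{w_k}$. Since each $\varphi_k$ exhibits $\L_k$ as a $d$--th root of a fixed line bundle, subject to the linear constraints imposed by the monomials of $W$, the relative problem is a torsor-type problem for a finite group scheme, generalizing the moduli of $r$--spin and of generalized spin curves. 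Representability, properness and the Deligne--Mumford property then follow from the corresponding facts for roots of line bundles on nodal orbifold curves: the balanced orbifold structure at the nodes guarantees that the roots glue compatibly across the normalization, and the order--$d$ stabilizers are precisely what make the relevant $d$--torsion sheaves finite over the normal crossings.

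The difficult step is the construction of $[\W_{g,n,G_W}(W)]^{vir}$. I would follow the analytic route via Witten's equation, the nonlinear first-order elliptic system $\bar\partial u_j + \overline{\frac{\partial W}{\partial u_j}} = 0$ on the $W$--curve with $u_j$ a section of $\L_j$. Its moduli of solutions admits a Fredholm description whose index equals the expected complex dimension. Two analytic obstacles must be overcome: \emph{compactness}, where the quasihomogeneity of $W$ together with the nondegeneracy of the isolated critical point at $0\in \CC^N$ are used to rule out escape of solutions to infinity and to control bubbling along the degenerating boundary; and \emph{transversality}, where one perturbs the equation inside a suitable Banach family so as to cut out a cycle of the correct dimension even when the linearized operator fails to be surjective. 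Assembling these through the virtual-cycle machinery for such Fredholm problems (in the style of Li--Tian or Fukaya--Ono) yields the class $[\W_{g,n,G_W}(W)]^{vir}$ in the expected degree.

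Finally, pushing forward along $\mathrm{st}\colon \W_{g,n,G_W}(W)\to \M{g}{n}$ and setting $\Lambda_{g,n}(e_{\alpha_1}\otimes\cdots\otimes e_{\alpha_n})$ to be the degree-correct component of $\mathrm{st}_*$ of the virtual cycle restricted to the sectors $\alpha_1,\dots,\alpha_n$, I would verify the axioms. The $S_n$--equivariance of CohFT~1 is built into the symmetric construction of the moduli problem. The gluing axioms CohFT~2 and~3 form the structural heart: over the boundary divisors of $\M{g}{n}$ the $W$--structures and the corresponding solution spaces factorize along the normalization, and one must show that $[\W_{g,n,G_W}(W)]^{vir}$ is compatible with this decomposition, with the contraction by $\eta^{-1}$ arising from the residue pairing on the two sectors attached to the branches of the node. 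The unit axioms U1 and U2 follow from the behaviour of the Witten-equation moduli under the forgetful map $\pi$ and from identifying $\Lambda_{0,3}(\mathbf 1\otimes\alpha_1\otimes\alpha_2)$ with $\eta(\alpha_1,\alpha_2)$. The main obstacle throughout is the analytic construction of the virtual cycle and the proof of its factorization under gluing; by comparison, the moduli-theoretic construction and the axiom verification are formal consequences once the cycle and its precise boundary behaviour are in hand.
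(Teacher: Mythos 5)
The paper does not prove this statement: it is quoted verbatim as a black--box result of Fan--Jarvis--Ruan, and the whole architecture of the paper is designed to \emph{avoid} ever touching the virtual fundamental cycle --- the author immediately replaces it by the list of ``axioms'' (\ref{subsection: FJRW state space}--\ref{subsection: FJRW concavity}) and works only with those. So there is no in--paper proof to compare yours against; the relevant comparison is with the construction in \cite{FJR} itself.

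Measured against that, your outline identifies the right strategy --- the moduli of $W$--structures as a root/torsor problem generalizing $r$--spin curves, the virtual cycle via the Witten equation, and the CohFT axioms via factorization along boundary strata --- but it is a roadmap, not a proof. Every step whose difficulty you correctly flag is left entirely to the reader, and these are precisely the content of the (several hundred page) original: the compactness and regularity theory for the Witten equation; the perturbation scheme, which in \cite{FJR} requires deforming $W$ to a polynomial with nondegenerate critical points and is genuinely delicate in the \emph{broad} sectors ($N_h \neq 0$), where the naive solution space is ill--behaved and the state space must be built from Lefschetz thimbles / relative (co)homology rather than simply ``restricted to sectors''; and the proof that the resulting class factorizes under the gluing morphisms with the contraction by $\eta^{-1}$. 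Your last sentence concedes that the axiom verification is ``formal once the cycle and its boundary behaviour are in hand,'' which is true but is exactly the point: nothing in the proposal puts them in hand. As a blind reconstruction of the cited theorem's proof strategy it is accurate; as a proof it has gaps coextensive with the theorem itself.
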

   
  For the cases when $G \subsetneq G_W$, consider the following construction. Let $Z$ be a Laurent polynomial, satisfying the following three conditions: (i) it's quasi--homogeneous with the same weights $q_k$ as $W$ (see Section~\ref{section: FJRW notations} for the notation), (ii) it has no monomials in common with $W$, (iii) $G = G_{W+Z}$. 
  
  Then one sets: $\W_{g,n,G}(W) := \W_{g,n,G_{W+Z}}(W+Z)$. It turns out that the moduli space obtained is independent of the choice of $Z$.
 
  Moreover, there is a universal curve $\C$ with the projection $\pi: \C \to \W_{g,n,G}$, endowed with the universal $W$--structure $(\LL_1,\dots,\LL_N)$.

  \begin{example}
    For $W = x_1^{r+1}$ and $G = G_W$ we have $\W_{g,n,G_W} \cong \overline{\mathcal{M}}_{g,n}^{r}$ --- the module space of $r$--spin curves.
  \end{example}

  \subsection{FJRW CohFT of a simple--elliptic singularity}
  Denote $\Omega_W := \Omega^N_{\CC^N,0} \Big/ \left(dW \wedge d^{N-1}_{\CC^N,0}\right)$. It's a finite dimensional rank one module over the Jacobian algebra of $W$ in case when $W$ has only isolated critical points. It's equipped with the non--degenerate bilinear form $\langle \cdot, \cdot \rangle_W$ --- the Poincar\'e residue pairing.
  
  For any $h \in G$ denote by $\Fix(h) \subseteq \CC^N$ the fixed locus of $h$ and $N_h := \dim(\Fix(h))$. Define
  $W^h := W\mid_{\Fix(h)}: \CC^{N_h} \to \CC$. We call $h \in G$ s.t. $N_h = 0$ the {\it narrow sector} group elements.
  
  For $N_h \neq 0$ we can consider the module $\Omega_{W^h}$. Because $W^h$ will have only isolated critical points too, $\Omega_{W^h}$ will be finite--dimensional, equipped with the non--degenerate bilinear form $\langle \cdot, \cdot \rangle_{W^h}$. It also has a (coordinate--wise) $G$--action on it. Denote $\Omega_h := \left( \Omega_{W^h} \right)^G$ --- the $G$--invariant subspace of $\Omega_{W^h}$. 
  
  If $N_h = 0$ we set $\Omega_h := \CC \cdot e_1$ with the trivial $G$--action, s.t. $(\Omega_h)^G = \Omega_h$. It's also assumed to have the bilinear form on it. Namely, $\langle e_1, e_1\rangle_{W^h} := 1$.
  
  Note that $\Fix(h) = \Fix(h^{-1})$. Let $\psi_h$ be an isomorphism $\Omega_h \cong \Omega_{h^{-1}}$.

  \begin{definition}
    We call a unital CohFT $\Lambda = \Lambda_{g,n}^{(W,G)}$ a FJRW CohFT of $(W,G)$ if it satisfies the following list of axioms \ref{subsection: FJRW state space} --- \ref{subsection: FJRW concavity}.
  \end{definition}

  \subsubsection{State space}\label{subsection: FJRW state space}
  $\Lambda$ is a CohFT on the state space $\H_{W,G} := \oplus_{h \in G} \H_h$, where as a vector space $\H_h \cong \Omega_h$ for all $h \in G$. Equip $\H_{W,G}$ with the $\CC$--bilinear pairing $\langle \cdot,\cdot\rangle_{W,G} := \oplus_{h\in G}\langle \cdot, \cdot \rangle_h$, for $ \langle \cdot, \cdot \rangle_h: \ \H_h \otimes_{\CC} \H_{h^{-1}} \to \CC$ defined by $\langle \cdot,\cdot \rangle_h:= \langle \cdot, \psi_{h^{-1}}(\cdot) \rangle_{W^h}$. This pairing is non--degenerate too.

  In what follows for any $h \in G$ by the element $\alpha_h \in \H_{W,G}$ we will always assume a vector, belonging to $\H_h \subset \H_{W,G}$.
  
  For any $h \in G$, let the numbers $\varTheta_k^h \in \QQ \cap [0,1)$ be s.t. $h$ is represented by the diagonal $\mathrm{GL}(N,\CC)$--matrix $\mathrm{diag}(e[\varTheta_1^h],\dots,e[\varTheta_N^h])$.
  
  The vector space $\H_{W,G}$ is graded by $\deg_W: \H_{W,G} \to \QQ$, defined by
  $$
    \deg_W(\alpha_h) := N_h + 2 \ \iota(g), \quad \alpha_h \in \H_h,
  $$
  where the \textit{degree shifting number} $\iota(h)$ is defined as follows.
  $$
    \iota(h):=\sum_{k=1}^N(\varTheta_k^h-q_k).
  $$
  
  \subsubsection{Degree}\label{subsection: FJR degree axiom}
  Set $\hat c := \sum_{k=1}^N (1 - 2q_k) \in \QQ$. The class $\Lambda_{g,n}^{(W,G)}(\alpha_{h_1},\dots, \alpha_{h_n})$ vanishes
  unless $\hat c(g-1) + \sum_i \iota_{h_i} \not\in \ZZ$.
  Otherwise it has the following degree
  \[
    2\left((\hat c -3)(1-g)+ n -\sum_{i=1}^n \iota(h_i) - \sum_{i=1}^n \frac{N_{h_i}}{2}\right).
  \]

  \subsubsection{Selection rule}\label{subsection: FJR selection rule}
  The class $\Lambda_{g,n}^{(W,G)}(\alpha_{h_1},\dots,\alpha_{h_n})$ is zero unless for all $1 \le k \le N$ holds:
  \[
    q_k(2g-2+n)-\sum_{i=1}^n \varTheta_k^{h_i} \in \ZZ
  \]
  
  \subsubsection{$G_W$--invariance}\label{subsection: FJR Gw invariance}
  Assume axiom~\ref{subsection: FJRW state space} to hold true.
  Consider the action of $G_W$ on each $\Omega_h$, and extend it to the action of $G_W$ on $\H_{W,G}$. The CohFT $\Lambda^{(W,G)}_{g,n}$ (considered as a system of linear maps) is required to be invariant under this action.

  \subsubsection{Concavity}\label{subsection: FJRW concavity}
  Suppose that $h_i\in G$ are s.t. $\Fix(h_i) = \emptyset$ for all $i = 1,\dots,n$.
  Let $\pi$ be the projection from the universal curve of the moduli space and $\LL_1,\dots,\LL_N$ be the universal $W$--structure. Let $c_{top}$ stand for the top Chern class.
  If $\pi_*\left(\bigoplus_{k=1}^N \LL_k \right)=0$, then holds:
  \[
    \Lambda_{g,n}^{(W,G)}(\alpha_{h_1},\dots,\alpha_{h_n}) = \frac{|G|^g}{\deg(\mathrm{st})} \mathrm{PD} \ \mathrm{st}_* c_{top}\left(\big(R^1\pi_*\bigoplus_{k=1}^N \LL_k \big)^\vee\right).
  \]
  The subspace of $\H_{W,G}$, generated by $\alpha_{h_1},\dots,\alpha_{h_n}$ is called \textit{concave}.

  \subsection{Remarks on the axioms}
  
  The state space axiom is usually introduced via the so--called {\it Lefschetz thimbles} of $W^h$. However they are only used further as the generators of the vector spaces, that are isomorphic to those we used --- $\Omega_h$.
  
  Degree axiom we formulate, is exactly Degree axiom of Fan--Jarvis--Ruan, modulo the notational difference. We give only the degrees of the cohomology classes in $\M{g}{n}$ while in \cite{FJR} the state space degrees (that of Lefschetz thimbles, treated as homology classes) are counted too.
  
  It's immediate to note that the CohFT $\Lambda^{(W,G)}$ is quasi--homogeneous with $\delta := 3- \hat c$ and the grading $\deg_W$ on $\H_{W,G}$. It's also unital with the unit --- the generator of $\Omega_{J}$ (which is one--dimensional because $\Fix(J) = \emptyset$).

  \subsection{Concavity axiom}
  In the list of axioms above it's clear that the only source of non--zero quantitative data of FJRW CohFT is concavity axiom and pairing axiom. The latter one only concerns the three point correlators $\langle \ \rangle_{0,3}$, hence this is only concavity axiom giving us the ``data''. It's a surprising fact, that this is indeed the concavity axiom, providing all non--trivial computations of all mirror symmetry results, we reference in this paper. In other words, this small source of data appeared to be powerful enough for the mirror symmetry needs.
  
  After the result of A.Chiodo (\cite[Theorem~1.1.1]{C}) the $\M{g}{n}$--cohomology class of concavity axiom can be written via the well--known $\M{g}{n}$ tautological classes --- $\kappa_d$, $\psi_k$, classes of the divisors. In particular for $W = x_1^4 + x_2^4 + x_3^2$ and $G = G_W$ we have:
  \[
   \Lambda_{0,4}^{(W,G_W)}(\alpha_{h_1}, \alpha_{h_2},\alpha_{h_3},\alpha_{h_4}) = \frac{1}{2} \sum_{i=1}^3 \left(B_2(q_i)\kappa_1 - \sum_{j=1}^3 B_2(\theta_i^{h_j})\psi_j  + \sum_{\Gamma} B_2(\theta_i^{h_\Gamma}) [\Gamma] \right),
  \]
  where $B_2(z) := z^2 - z + 1/6$, $[\Gamma]$ is a class of the divisor in $\M{0}{4}$ and the summation is taken over the possible decorations of such a divisor.
  Consult \cite[Section~3]{F} for details.
  
  \subsection{FJRW theory of a simple--elliptic singularity}
%
  Fixing the basis $\{ \phi_k^{(h)}(\bx)d^{N_h}\bx \}$ of $\Omega_h $ for all $h \in G$, we will consider the basis $\left\lbrace [h,\phi_k^{(h)}(\bx)] \right\rbrace_{h,k}$ of $\H_{W,G}$. For narrow $h \in G$, s.t. $N_h = 0$ we denote $\alpha_h \in \H_h \subset \H_{W,G}$ by $[h,1]$.
  
  Associate also to the vector $[h,\phi_k^{(h)}(\bx)]$ the variable $t_{\phi_k^{(h)}(\bx),h}$ if $N_h \neq 0$ and the variable~$t_{h}$ to $[h,1]$.

  In the case of simple--elliptic singularities concavity axiom is in particular powerful.
  
  \begin{proposition}
    Let $W = x_1^4 + x_2^4 + x_3^2$ define a simple--elliptic singularity and $G$ be any admissible group of its symmetries. Then for any $h_1,\dots,h_n$, s.t. $N_{h_k} = 0$ for all $1 \le k \le n$ the subspace generated by $\alpha_{h_1},\dots,\alpha_{h_n}$ is concave.
  \end{proposition}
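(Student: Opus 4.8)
The plan is to verify the hypothesis of the concavity axiom~\ref{subsection: FJRW concavity}, namely that $\pi_*\left(\bigoplus_{k=1}^{3}\LL_k\right)=0$; the stated concavity formula, and hence concavity of the subspace, then hold by definition. For a proper flat family this vanishing is equivalent to $H^0(\C,\LL_k)=0$ for each $k=1,2,3$ and every $W$--curve $\C$ in the relevant genus zero moduli space $\W_{0,n,G}$. Recall that for $W=x_1^4+x_2^4+x_3^2$ one has $q_1=q_2=1/4$, $q_3=1/2$, $d=4$, and the universal $W$--structure gives $\LL_k^{\otimes d}\cong\omega_{\log}^{w_k}$; by the selection rule~\ref{subsection: FJR selection rule} the coarse line bundle $|\LL_k|$ on a fibre therefore has integral degree $q_k(2g-2+n)-\sum_i\varTheta_k^{h_i}$. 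The decisive numerical input is that each monodromy exponent $\varTheta_k$ at a marking or a node lies in $q_k\ZZ/\ZZ$, so its least positive value is exactly $q_k$; since narrowness ($N_{h_i}=0$) forces $\varTheta_k^{h_i}>0$ for all $k$, every marking obeys $\varTheta_k^{h_i}\geq q_k$, and so does every twisted node.

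On a smooth genus zero curve the degree above equals $q_k(n-2)-\sum_i\varTheta_k^{h_i}\leq q_k(n-2)-n\,q_k=-2q_k<0$, so $H^0(\C,\LL_k)=H^0(C,|\LL_k|)=0$ on the rational coarse curve $C$. This disposes of the interior of $\W_{0,n,G}$.

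I expect the main obstacle to be the boundary: on a reducible $\C$ the per--component coarse degree of $\LL_k$ may become non--negative, precisely at components carrying several \emph{untwisted} node branches (where $\varTheta_k=0$). I would handle this by induction on the number of irreducible components, pruning leaves and carrying along a divisor $D$ of untwisted points at which global sections are forced to vanish. For a leaf component $\C'$ with narrow markings $M'$, a set $\delta'$ of untwisted points of $D$, and a single node of exponent $b\geq 0$, the bound $\varTheta_k\geq q_k$ on the markings gives
\[
  \deg\!\left(|\LL_k|_{\C'}(-D)\right)\;\leq\;(q_k-1)\,|\delta'|-q_k-b\;<\;0,
\]
because each untwisted point contributes the net amount $q_k-1<0$ (it gains $q_k$ in the orbifold degree but loses $1$ from the imposed vanishing), while $q_k>0$ and $b\geq 0$. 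Hence any global section restricts to zero on $\C'$ and so vanishes at the node joining $\C'$ to the rest; deleting $\C'$ and recording that node as a new point of $\hat\C=\overline{\C\setminus\C'}$ --- an ordinary special point of exponent $1-b\geq q_k$ if $b>0$, or a vanishing point of $D$ if $b=0$ --- reproduces the inductive hypothesis. The base case is a single rational component, where the same computation is negative. Therefore $H^0(\C,\LL_k)=0$ for all fibres and all $k$, so $\pi_*\left(\bigoplus_k\LL_k\right)=0$ and the narrow subspace is concave.

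The delicate point is the bookkeeping at the nodes in the inductive step: distinguishing twisted from untwisted branches and checking that the reduced exponent $1-b$ (respectively the newly imposed vanishing) keeps every leaf strictly negative. The reason the induction closes is exactly the arithmetic fact that the minimal positive monodromy in each coordinate equals $q_k$, so that narrow markings and twisted nodes always pay at least $q_k$, while untwisted nodes are always compensated by a forced vanishing of the section.
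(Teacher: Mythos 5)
Your proposal is correct and follows essentially the same route as the paper: bound the degree of the coarse pushforward $|\LL_k|$ on each irreducible component using that every narrow insertion (and every twisted node branch) contributes a monodromy exponent of at least $q_k$, conclude the degrees are too negative to admit sections, and hence that $\pi_*\bigl(\bigoplus_k\LL_k\bigr)=0$. The paper compresses the nodal bookkeeping into a single per--component inequality and a citation of Proposition~1.6 of \cite{PS}, whereas you spell out the leaf--pruning induction explicitly; the content is the same.
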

  \begin{proof}
    The proof copies proof of Proposition~1.6 in \cite{PS}. It's enough to count the line bundle degrees of $\L_k$. Because $\sum_{k=1}^3 q_k = 1$ and $q_k < 1$ for a point $(\C,p_1,\dots, p_n,\L_1,\L_2,\L_3, \phi_1,\phi_2,\phi_3)$ on each irreducible component $\C_v$ of $\C$ holds
    $$
      \deg(|\L_k|_{\C_v}) \le q_k \left(\text{\# nodes}(\C_v) -2\right) < \text{\# nodes}(\C_v) -1,
    $$
    where $|\L_k|$ denotes the pushforward of $\L_k$ to the underlying curve of $\C$. The inequality obtained finally shows that $|\L_k|$ has no section.
  \end{proof}
  \begin{corollary}\label{corollary: concave sectors}
    For a simple--elliptic singularity $W$ let $F_0^{(W,G)}$ and $F_0^{(W,G_W)}$ be the genus zero primary FJRW potentials of $(W,G)$ and $(W,G_W)$ respectively. Then holds:
    $$
      F_0^{(W,G)} \mid_{t_{\phi,h} = 0, \ h \not\in G^{nar}} \quad = \quad F_0^{(W,G_W)} \mid_{t_{\phi,h} = 0, \ h \not\in G^{nar}}
    $$
  \end{corollary}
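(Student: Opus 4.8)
The plan is to compare the two restricted potentials coefficient by coefficient. After setting all broad variables $t_{\phi,h}$ with $h\notin G^{nar}$ to zero, the surviving monomials of $F_0^{(W,G)}$ and of $F_0^{(W,G_W)}$ are indexed by the genus--zero correlators $\langle \tau_0(\alpha_{h_1})\cdots\tau_0(\alpha_{h_n})\rangle_{0,n}$ with all insertions narrow, $N_{h_i}=0$. Since every $h_i$ lies in $G\subseteq G_W$ and the associated one--dimensional state $[h_i,1]$ (with the normalization $\langle[h,1],[h^{-1},1]\rangle=1$) is the same in both theories, it suffices to prove that each such narrow correlator takes the same value for $G$ and for $G_W$. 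By the preceding Proposition any collection of narrow insertions is concave in genus zero --- simultaneously for $G$ and for $G_W$ --- so each of these correlators is governed by the concavity axiom, and there the prefactor $|G|^{g}=|G|^{0}=1$ disappears.

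Next I would invoke Chiodo's theorem in the form recalled above: the normalized pushforward $\deg(\mathrm{st})^{-1}\,\mathrm{st}_* c_{top}\big((R^1\pi_*\bigoplus_k \LL_k)^\vee\big)$, which is exactly the genus--zero concavity class $\Lambda_{0,n}$, is a universal tautological class on $\M{0}{n}$ assembled from $\kappa_1$, the $\psi_j$, and the boundary divisor classes, with coefficients the values of $B_2$ at the weights $q_k$, the insertion angles $\theta_k^{h_i}$, and the node angles $\theta_k^{h_\Gamma}$. The crucial feature is that this expression is \emph{local}: it refers only to the weights and to the monodromy data of the $\LL_k$ at the special points, and never to the ambient group itself. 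Consequently, once I verify that all of this input data agrees for $G$ and for $G_W$, the two classes coincide, and so do their integrals against the $\psi$--monomials, i.e. the correlators.

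The weights $q_k$ are identical, and the insertion angles $\theta_k^{h_i}$ are intrinsic to $h_i\in G\subseteq G_W$, so the only potentially group--dependent ingredient is the set of boundary decorations. A genus--zero boundary stratum is a tree; along an edge separating the marked points indexed by a subset $S$ from the rest, the selection rule applied to the corresponding sub--component (of genus $0$ with $|S|+1$ special points) forces, for every $k$, the congruence $\theta_k^{h_\Gamma}\equiv (|S|-1)q_k-\sum_{i\in S}\theta_k^{h_i}\pmod 1$. In terms of the diagonal representation this reads $h_\Gamma = J^{\,|S|-1}\prod_{i\in S}h_i^{-1}$. Here is exactly where admissibility enters: since $J\in\langle J\rangle\subseteq G$ and each $h_i\in G$, the decoration $h_\Gamma$ already lies in $G$; and because $G_W$ contains a unique diagonal element with prescribed angles, $h_\Gamma$ is literally the same element, with the same angles $\theta_k^{h_\Gamma}$, in both theories (whether or not it happens to be broad, which is immaterial to Chiodo's formula). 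Thus every coefficient in Chiodo's formula matches, the concavity classes agree, and the narrow correlators --- hence the two restricted potentials --- coincide.

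I would expect the main obstacle to be precisely the control of the boundary decorations: one must argue that the node group elements produced along every degeneration stay inside the smaller group $G$, which is the very content of admissibility $J\in G$, and that they coincide as elements of $G_W$, so that Chiodo's local formula is fed identical data. A secondary point to handle carefully is that the normalization $|G|^{g}/\deg(\mathrm{st})$ introduces no hidden group dependence; this is settled by restricting to $g=0$, where $|G|^{g}=1$, together with the fact that Chiodo's formula already delivers the $\deg(\mathrm{st})$--normalized pushforward as a group--independent, purely local expression.
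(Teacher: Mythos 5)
Your argument is correct and follows the same route as the paper: identify the narrow states of $(W,G)$ with the corresponding one--dimensional states of $(W,G_W)$, invoke the preceding concavity proposition for all--narrow insertions in genus zero, and conclude that the concavity--axiom classes (hence the correlators) agree. The paper simply asserts that ``the formula for the correlators is literally the same''; your use of Chiodo's tautological expression and the check that the boundary decorations $h_\Gamma = J^{\,|S|-1}\prod_{i\in S}h_i^{-1}$ lie in $G$ by admissibility supplies exactly the verification the paper leaves implicit.
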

  \begin{proof}
    The full state space $\H_{W,G_W}$ is concave.
    As the vector space $\H_{W,G}$ is defined as the direct sum over all $G$ elements, if $\alpha_h \in \H_{W,G}$, then there is a vector $\alpha_h' \in \H_{h} \subset \H_{W,G_W}$. 
    These two vectors can be identified because $\Omega_h \cong \CC$.
    The rest follows from Concavity axiom because the formula for the correlators of $\Lambda_{0,n}^{(W.G)}$ and $\Lambda_{0,n}^{(W,G_W)}$ is literally the same.
  \end{proof}

\section{Gromov--Witten theory of elliptic orbifolds}\label{section: GW theory}
  In this paper we make use of the \textit{orbifold Gromov--Witten} (that we call later just GW theory). Like FJRW theory, GW theory also defines certain CohFT. The state space of it is the orbifold cohomology ring, or Chen--Ruan cohomology ring, and the CohFT is fixed by the (Poincare dual to the pushforward of) virtual fundamental class of the moduli space of stable maps. 
  
  We skip completely the definition of the Gromov--Witten theory here, referencing an interested reader to \cite{A}. For the cases we only need in this paper --- of the elliptic orbifolds, we define the Gromov--Witten theory in genus zero by giving explicitly the CohFT potentials, found in \cite{ST,BP,SZ2}.
  
  The so--called \textit{elliptic orbifolds} $\PP^1_{a,b,c}$ for $(a,b,c) = (3,3,3)$, $(4,4,2)$ or $(6,3,2)$ --- are smooth orbifold projective lines with only $3$ points having the non--trivial orbifold structure $\ZZ/a\ZZ$, $\ZZ/b\ZZ$ and $\ZZ/c\ZZ$. They are called elliptic because each of them can be realized as a global quotient of the elliptic curve by the finite group action. GW theory of these orbifolds was found to give the A--model, mirror to the Saito structures B--model of the simple--elliptic singularities (see the references, given in Introduction).
  
  Apart from the three elliptic orbifold named, there is one more, more mysterious one -- $\PP^1_{2,2,2,2}$. This orbifold is obtained as a global quotient of an elliptic curve by the hyperelliptic involution. Compared to the previously named elliptic orbifolds, this one was not identified in the context of mirror symmetry until the recent result of \cite{SZ2}.
  
  In what follows denote $\X_2 := \PP^1_{2,2,2,2}$ and $\X_4 := \PP^1_{4,4,2}$. Fix the bases of the Chen--Ruan cohomology $H^*_{orb}(\X_k)$ as follows.
  
  Let $\Delta_0, \Delta_{-1}$ be the degree $0$ and degree $2$ generators of $H^*(\PP^1)$ respectively, viewed as untwisted sector of $H^*_{orb}(\X_k)$. Let $\Delta_{i,j}$ be the twisted sector generators, corresponding to the $i$--th point with a non--trivial isotropy group. We have:
  $$
    H^*_{orb}(\X_2) \cong \QQ\Delta_0\oplus\QQ\Delta_{-1} \ \bigoplus_{i=1}^4 \QQ\Delta_{i,1},
    \
    H^*_{orb}(\X_4) \cong \QQ\Delta_0\oplus\QQ\Delta_{-1} \ \bigoplus_{j=1}^3 \QQ\Delta_{1,j} \bigoplus_{j=1}^3 \QQ\Delta_{2,j} \bigoplus \QQ \Delta_{3,1}.
  $$
  The ring $H^*_{orb}(\X_k)$ is also endowed with the pairing $\eta$, an analogue of the Poincar\'e pairing. Gromov--Witten theory of $\X_k$ expresses the intersection theory of the moduli space of the stable orbifold maps to $\X_k$. 
  We will be only working with the CohFT it defines on the moduli space of stable curves. 
  
  The genus $0$ potential of the Gromov--Witten theory of $\X_k$ is a function of the variables $\bt$, being dual to the basis element fixed, and also of the formal {\it Novikov variable} $q_{formal}$. We will fix the variables $\bt$ differently in what follows, but we always keep $t_0,t_{-1}$ to correspond to the basis elements $\Delta_0, \Delta_{-1}$ respectively.
  
  \subsection{Novikov variable}
  The Novikov variable $q = q_{formal}$ is used to keep track of the homology class --- it appears in the genus $g$ potential as $q^\beta$, where $\beta \in H_2(X)$. In our case $\dim(H_2(\X_k)) = 1$ and by using Divisor equation (of the GW theory) the Novikov variable $q$ can be identified with $\exp(t_{-1})$ (cf. \cite[Section 1.2]{SZ1}). The correlation functions of the genus $0$ potentials  after such an identification appear to coincide with the Fourier expansions of the certain functions. However it's useful to work with the function itself rather than the Fourier expansion of it.
  To do this we make another identification of the Novikov variable that depends on the orbifold in question:
  \begin{equation}\label{eq: Novikov to modular}
    q_{formal} = \exp(t_{-1}) = \exp \left( \frac{2 \pi \sqrt{-1} \tau}{k} \right) =: q_k, 
    \quad
    \text{for the orbifold } \X_k.
  \end{equation}
  This identification also affects the cubic terms of the partition function, fixed by the pairing in Axiom~U1. Because of this we can't just take the change of the variables $t_{-1} = 2\pi \sqrt{-1} \tau /k$ (what would change the CohFT state space) and will treat this identification carefully.

  At the same time only after making an identification of the formal variable we get the clear holomorphicity property of the genus zero potential and are able to introduce suitable group action, we use later in the text. For this purpose we introduce new functions --- \textit{analytic potentials} of $\PP^1_{2,2,2,2}$ and $\PP^1_{4,4,2}$ GW theories in order to make the statements about the genuine genus zero potentials. One can do the same for the remaining two elliptic orbifolds $\PP^1_{3,3,3}$ and $\PP^1_{6,3,2}$ as well.
  
  \subsection{Gromov--Witten theory of $\PP^1_{2,2,2,2}$}
  The genus zero potential of this GW theory was found explicitly by Satake--Takahashi in \cite{ST}. We present their result here in a slightly modified form that will be useful for us in what follows.
  
  Let the variables $\{ t_0,t_{-1}, t_1,t_2,t_3,t_4 \}$ be dual to the following basis of $H^*_{orb}(\PP^1_{2,2,2,2})$ (recall the notation above)
  $$
    \left\lbrace\Delta_0, \Delta_{-1}, \frac{1}{\sqrt{2}}\left(\Delta_{2,1}-\Delta_{4,1} \right), \frac{1}{\sqrt{2}}\left(\Delta_{2,1}+\Delta_{4,1} \right), \frac{1}{\sqrt{2}}\left(\Delta_{1,1}-\Delta_{3,1} \right), \frac{1}{\sqrt{2}}\left(\Delta_{1,1}+\Delta_{3,1} \right)\right\rbrace.
  $$
  Consider the functions $\psi_k$, defined by the following formal series in $q$:
  \begin{align*}
    \psi _2(q) := \frac{1}{2} + 2\sum_{n=1}^\infty (-1)^{n-1}\frac{2n q^n}{1-q^n} ,
    \
    \psi _3(q) := 2\sum_{n=1}^\infty (-1)^{n-1}\frac{2n q^{n/2}}{1-q^n}, 
    \ 
    \psi _4(q) := -2\sum_{n=1}^\infty \frac{2n q^{n/2}}{1-q^n}.
  \end{align*}
  In the basis fixed the primary genus zero potential of the GW theory in question assumes the following form:
  \begin{align*}
    F_0^{\PP^1_{2,2,2,2}} &= \frac{1}{2} t_0^2 t_{-1} +\frac{1}{4} t_0 \sum_{k=2}^5 t_k^2  
    -\frac{1}{16} \left(t_3^2 t_4^2 + t_1^2 t_2^2 \right) \psi_4 \left(q^2 \right) -\frac{1}{16} \left(t_1^2 t_3^2 + t_2^2 t_4^2 \right) \psi_2 \left(q^2\right)
    \\
    &  - \frac{1}{16} \left( t_2^2 t_3^2 + t_1^2 t_4^2 \right) \psi_3 \left(q^2\right) - \frac{1}{96} \left( \sum_{k=2}^5 t_k^4 \right) \left( \sum_{k=2}^4 \psi_k \left(q^2\right) \right), \quad q = \exp(t_{-1}).
  \end{align*}
  WDVV equation on this genus zero potential is equivalent to the following system of PDE's on the functions $\{X_2(q),X_3(q),X_4(q)\}$, satisfied by the triple $\{ \psi_2(q^2), \psi_3(q^2), \psi_4(q^2) \}$:
  \begin{equation}\label{eq: Halp in q}
    \begin{aligned}
      q\frac{\p }{\p q} X_2(q) &= X_2(q) \left(X_3(q) + X_4(q) \right) - X_3(q) X_4(q),
      \\
      q\frac{\p }{\p q} X_3(q) &= X_3(q) \left(X_2(q) + X_4(q) \right) - X_2(q) X_4(q),
      \\
      q\frac{\p }{\p q} X_4(q) &= X_4(q) \left(X_2(q) + X_3(q) \right) - X_2(q) X_3(q),
    \end{aligned}
  \end{equation}
  that we call a Halphen's system of equations.

  Note that up to now we didn't use the relation between $q$ and $t_{-1}$. 
  For all $\tau \in \HH$ let the Jacobi theta constants $\vartheta_k(\tau)$ be the holomorphic functions on $\HH$ given by the following Fourier series:
  \begin{equation*}
    \begin{aligned}
      \vartheta_2(\tau) := \sum_{n = -\infty}^\infty e^{\pi \sqrt{-1} \tau (n-1/2)^2},
      \quad
      \vartheta_3(\tau) := \sum_{n = -\infty}^\infty e^{\pi \sqrt{-1} \tau n^2} ,
      \quad
      \vartheta_4(\tau) := \sum_{n = -\infty}^\infty (-1)^n e^{\pi \sqrt{-1} \tau n^2} .
    \end{aligned}
  \end{equation*}
  The function $\vartheta_1(\tau)$ is skipped because it vanishes identically. 
  Consider the functions:
  $$
    X_k^\infty(\tau) := 2 \p_\tau \log \vartheta_k(\tau), \quad X_k^\infty \left( q \right) := \frac{1}{\pi \sqrt{-1}} X_k^\infty \left( \frac{\tau}{\pi \sqrt{-1}} \right), \quad k = 2,3,4.
  $$
  Then the triple $\{X_2^\infty(\tau),X_3^\infty(\tau),X_4^\infty(\tau)\}$ is a solution of Haplhen's system of equations:
  \begin{equation}\label{eq: Halp in tau}
  \begin{aligned}
    \frac{\p }{\p \tau} X_2(\tau) = X_2(\tau) \left(X_3(\tau) + X_4(\tau) \right) - X_3(\tau) X_4(\tau).
    \\
    \frac{\p }{\p \tau} X_3(\tau) = X_3(\tau) \left(X_2(\tau) + X_4(\tau) \right) - X_2(\tau) X_4(\tau),
    \\
    \frac{\p }{\p \tau} X_4(\tau) = X_4(\tau) \left(X_2(\tau) + X_3(\tau) \right) - X_2(\tau) X_3(\tau),
  \end{aligned}
  \end{equation}
  and $\{ X_2^\infty(q), X_3^\infty(q), X_4^\infty(q) \}$ give solution to Eq.~\eqref{eq: Halp in q}.
  We have the equality:
  $$
    \pi \sqrt{-1} \psi_k(q) = X_k^\infty(\tau).
  $$
  
  \begin{notation}\label{notation: analytic potential X2}
    In what follows we denote by $F_{an}^{\PP^1_{2,2,2,2}}$ the analytic potential of $\PP^1_{2,2,2,2}$:
    \begin{align*}
      F_{an}^{\PP^1_{2,2,2,2}} &= \frac{1}{2} t_0^2 \tau +\frac{1}{4} t_0 \sum_{k=2}^5 t_k^2  
      -\frac{1}{16} \left(t_3^2 t_4^2 + t_1^2 t_2^2 \right) X_4^\infty(\tau) -\frac{1}{16} \left(t_1^2 t_3^2 + t_2^2 t_4^2 \right) X_2^\infty(\tau)
      \\
      &  - \frac{1}{16} \left( t_2^2 t_3^2 + t_1^2 t_4^2 \right) X_3^\infty(\tau) - \frac{1}{96} \left( \sum_{k=2}^5 t_k^4 \right) \left( \sum_{k=2}^4 X_k^\infty(\tau) \right).
    \end{align*}
  \end{notation}
  
  \begin{proposition}
    The function $F_{an}^{\PP^1_{2,2,2,2}}$ is holomorphic on $\CC^5\times\HH$ and is solution to the WDVV equation.
  \end{proposition}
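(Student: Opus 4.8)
The plan is to establish the two claims in turn, handling holomorphicity first and then the WDVV equation.

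For holomorphicity, I observe that $F_{an}^{\PP^1_{2,2,2,2}}$ is a polynomial in $t_0,t_1,t_2,t_3,t_4$ and $\tau$ whose only non-elementary coefficients are the functions $X_k^\infty(\tau)=2\p_\tau\log\vartheta_k(\tau)$, $k=2,3,4$. Thus it suffices to prove that each $X_k^\infty$ is holomorphic on $\HH$. The theta constants $\vartheta_k(\tau)$ are defined by Fourier series that converge locally uniformly on $\HH$, so they are holomorphic there; and by the Jacobi triple product each admits an infinite-product expansion in $p=e^{\pi\sqrt{-1}\tau}$ all of whose factors are nonzero (since $|p|<1$ on $\HH$), so $\vartheta_k$ has no zeros on $\HH$. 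Consequently $X_k^\infty=2\vartheta_k'/\vartheta_k$ is a quotient of holomorphic functions with nowhere-vanishing denominator, hence holomorphic on $\HH$, and $F_{an}^{\PP^1_{2,2,2,2}}$ is holomorphic on $\CC^5\times\HH$.

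For the WDVV equation, I would exploit that $F_{an}^{\PP^1_{2,2,2,2}}$ has, in the variables $(t_0,\tau,t_1,t_2,t_3,t_4)$, exactly the same polynomial shape as $F_0^{\PP^1_{2,2,2,2}}$ has in $(t_0,t_{-1},t_1,t_2,t_3,t_4)$ under the formal replacement $t_{-1}\mapsto\tau$ and $\psi_k(q^2)\mapsto X_k^\infty(\tau)$. In particular the flat metric read off from the cubic part is the same: $\p_{t_0}^2\p_\tau F_{an}^{\PP^1_{2,2,2,2}}=1$ gives $\eta_{0,-1}=1$, and $\p_{t_0}\p_{t_k}^2 F_{an}^{\PP^1_{2,2,2,2}}=1/2$ gives $\eta_{k,k}=1/2$ on the four twisted directions, a non-degenerate pairing coinciding with the one for $F_0^{\PP^1_{2,2,2,2}}$. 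Since the WDVV equation \eqref{eq: wdvv} is a differential--algebraic identity in the coefficient functions and their single derivative along the elliptic variable, its reduction is purely formal and transfers verbatim under this replacement.

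The remaining point is to match the elliptic derivatives. Because $q=\exp(t_{-1})$ one has $\p_{t_{-1}}=q\p_q$, so for $F_0^{\PP^1_{2,2,2,2}}$ the elliptic derivative acting on the coefficients is exactly $q\p_q[\psi_k(q^2)]$; the equivalence recorded just before \eqref{eq: Halp in q} then says precisely that WDVV holds if and only if the coefficient triple solves the Halphen system with respect to the elliptic-variable derivative. Transporting this equivalence through $t_{-1}\mapsto\tau$---which is legitimate because the term $\tfrac12 t_0^2\tau$ makes $\p_\tau$ play the exact role of $\p_{t_{-1}}$---I conclude that WDVV for $F_{an}^{\PP^1_{2,2,2,2}}$ is equivalent to $\{X_2^\infty(\tau),X_3^\infty(\tau),X_4^\infty(\tau)\}$ solving the Halphen system \eqref{eq: Halp in tau}. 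That solvability was recorded above, so the WDVV equation holds. I expect the only genuine obstacle to be the bookkeeping of this change of elliptic variable, flagged already after \eqref{eq: Novikov to modular}: one must confirm that the normalization $\tfrac12 t_0^2\tau$ is exactly what makes $\p_\tau$ correspond to $\p_{t_{-1}}$, and that the reduction of WDVV to the Halphen system is insensitive to replacing $\psi_k(q^2)$ by any other triple satisfying the same first-order system, so that no analytic input beyond \eqref{eq: Halp in tau} enters. A fully self-contained alternative would be to compute the three-point functions $\p_i\p_j\p_l F_{an}^{\PP^1_{2,2,2,2}}$ directly, contract with $\eta^{-1}$, and check \eqref{eq: wdvv} by hand; this is routine but lengthy, and the structural argument above avoids it.
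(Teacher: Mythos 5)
Your argument is correct and is essentially the paper's own (the paper's proof is a one-line appeal to the definition of $X_k^\infty$, the Halphen system, and the structure of $F_0^{\PP^1_{2,2,2,2}}$): holomorphicity follows from the non-vanishing of the theta constants on $\HH$, and WDVV follows because the reduction of WDVV to the Halphen system is formal in the coefficient triple, which for $F_{an}^{\PP^1_{2,2,2,2}}$ is $\{X_k^\infty(\tau)\}$ solving Eq.~\eqref{eq: Halp in tau}. You have simply filled in the details the paper leaves as ``straightforward,'' including the bookkeeping of the elliptic variable that the paper flags after Eq.~\eqref{eq: Novikov to modular}.
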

  \begin{proof}
    This is straightforward by using the definition of the function $X_k^\infty(\tau)$, Eq.~\eqref{eq: Halp in q} and the properties of $F_0^{\PP^1_{2,2,2,2}}$.
  \end{proof}

  The connection between the functions $F_0^{\PP^1_{2,2,2,2}}$ and $F_{an}^{\PP^1_{2,2,2,2}}$ is obvious --- we have applied the relation $q_{formal} = q_k(\tau)$, however in order to obtain the function, that is solution to the WDVV equation, we had to make an additional rescaling. In what follows we are going to use the second function (having only an indirect connection to the GW theory) in order to make statement about the first function (being indeed a true potential of the GW theory).
  
  Comparing to the functions $\psi_k(q)$ and $X_k^\infty(q)$, big advantage of the functions $X_k^\infty(\tau)$ is that they are holomorphic in $\HH$. Apart from the holomorphicity property, the functions $X_k^\infty(\tau)$ enjoy another major advantage --- there is a $\SL2C$ group action on the space of solutions to the Halphen's system Eq.\eqref{eq: Halp in tau} written in $\tau$, but not on that of Eq.~\eqref{eq: Halp in q}.

  \subsection{Gromov--Witten theory of $\PP^1_{4,4,2}$}
  We write this GW theory in the basis $\Delta_{i,j}$, we have considered at the start of the section. Let also the coordinates $t_{i,j}$ be corresponding to this basis elements. The genus $0$ potential of this orbifold is written completely via the functions $x(q)$, $y(q)$, $z(q)$ and $w(q)$, defined by:
  \begin{align*}
    \frac{1}{4} x(q) := \langle \Delta_{1,1}, \Delta_{1,1}, \Delta_{1,2} \rangle_{0,3}, \quad \frac{1}{4} y(q) := \langle \Delta_{1,2}, \Delta_{2,1}, \Delta_{2,1} \rangle_{0,3}
    \\
    - \frac{1}{8} w(q) := \langle \Delta_{1,1},\Delta_{1,1},\Delta_{1,3},\Delta_{1,3}\rangle_{0,4}, \quad \frac{1}{4} z(q) := \langle \Delta_{1,1}, \Delta_{2,1}, \Delta_{3,1} \rangle_{0,3}.
  \end{align*}
  The functions $x(q)$, $y(q)$, $z(q)$, $w(q)$ have the following expression:
  \begin{align*}
    x(q) &= \left( \vartheta_3(q^8) \right)^2, \ y(q) = \left( \vartheta_2(q^8) \right)^2, \ z(q) = \left( \vartheta_2(q^4) \right)^2, 
    \\ 
    w(q) &= \frac{1}{3} \left( f(q^4) - 2 f(q^8) + 4 f(q^{16}) \right)
  \end{align*}
  for the functions $\vartheta_k(q)$ as above and $f(q) := 1 - 24 \sum_{k=1}^\infty \dfrac{k q^k}{1-q^k}$.
  \begin{proposition}[Appendix~A in \cite{BP} and Section~3.2.3 in \cite{SZ1}]\label{prop: gw potential basics}
    The potential $F^{\PP^1_{4,4,2}}_0$ has an explicit form via the functions defined above. Namely there exists the polynomial $\mathrm{P}_{poly}^{\PP^1_{4,4,2}} = \mathrm{P}_{poly}^{\PP^1_{4,4,2}}(t_0,t_{-1},t_{i,j},x,y,z,w) \in \QQ \left[t_0,t_{-1},t_{i,j}, x,y,z,w \right]$, s.t.
    \[
      F^{\PP^1_{4,4,2}}_0 (t_0,t_{-1},t_{i,j},q) = \mathrm{P}^{\PP^1_{4,4,2}}_{poly} (t_0,t_{-1}, t_{i,j}, x(q), y(q), z(q), w(q)),
    \]
    for $x(q)$, $y(q)$, $z(q)$ and $w(q)$ as above. 
    Moreover the following homogeneity property holds:
     \[
       \mathrm{P}^{\PP^1_{4,4,2}}_{poly} \left( t_0,t_{-1},t_{i,j}, x,y,z,w \right) = \frac{1}{\alpha^{2}} \mathrm{P}^{\PP^1_{4,4,2}}_{poly} \left( t_0, \alpha^2 \cdot t_{-1}, \alpha \cdot t_{i,j}, \frac{x}{\alpha},\frac{y}{\alpha},\frac{z}{\alpha},\frac{w}{\alpha^2} \right),
     \]
     for any $\alpha \in \CC^*$.
  \end{proposition}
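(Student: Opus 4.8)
The plan is to derive both claims from the quasi--homogeneity of the Gromov--Witten CohFT of $\PP^1_{4,4,2}$ together with the WDVV, string and divisor equations, using the quasi--modularity of the four generating functions to control the reconstruction. First I would use the conformal grading of the CohFT --- the GW analogue of the degree axiom~\ref{subsection: FJR degree axiom} and selection rule~\ref{subsection: FJR selection rule} --- to bound the twisted insertions. Each twisted generator $\Delta_{i,j}$ carries a strictly positive rational degree, so the dimension constraint forces every nonzero genus zero correlator to contain only a bounded number of twisted insertions. After the divisor equation is used to replace the Novikov variable by $q = \exp(t_{-1})$ (cf.\ Eq.~\eqref{eq: Novikov to modular}), this already shows that $F_0^{\PP^1_{4,4,2}}$ is a polynomial in $t_0$ and the twisted variables $t_{i,j}$, of bounded degree, whose coefficients are holomorphic functions of $q$; the only occurrence of $t_{-1}$ outside $q$ is the cubic term fixed by Axiom~U1.

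Next I would reconstruct every coefficient function from the four basic correlators. By definition $x,y,z$ are three--point twisted correlators and $w$ is a four--point one, and by the WDVV equation~\eqref{eq: wdvv} together with the string and divisor equations every higher correlator is expressed recursively in terms of these. The crucial point, and the main obstacle, is to show that this recursion never leaves the polynomial ring $\QQ[x,y,z,w]$. The divisor equation identifies insertion of $\Delta_{-1}$ with the derivation $q\,\p_q$, so closure of the recursion is equivalent to closure of $\QQ[x,y,z,w]$ under $q\,\p_q$. This is where the explicit expressions are essential: $x=(\vartheta_3(q^8))^2$, $y=(\vartheta_2(q^8))^2$, $z=(\vartheta_2(q^4))^2$ are squares of theta constants, and since $f=E_2$ the function $w$ is a weight two quasi--modular form; hence all four lie in the ring of quasi--modular forms, which is preserved by $q\,\p_q$ via the Ramanujan identities. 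Writing each $q\,\p_q$ of a generator as an explicit polynomial in $x,y,z,w$ --- a finite check using theta--function identities --- then closes the induction and produces the polynomial $\mathrm{P}^{\PP^1_{4,4,2}}_{poly}$.

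Finally, for the homogeneity statement I would invoke the quasi--homogeneity of the GW CohFT. The Euler grading assigns a weight to $t_{-1}$ and to each $t_{i,j}$, and the primary potential is quasi--homogeneous of a single total weight. The substitution in the statement, namely $t_{-1}\mapsto \alpha^2 t_{-1}$ and $t_{i,j}\mapsto\alpha t_{i,j}$ together with the induced rescaling $x\mapsto x/\alpha$, $y\mapsto y/\alpha$, $z\mapsto z/\alpha$, $w\mapsto w/\alpha^2$ on the generators, is precisely the $\CC^*$--Euler scaling of the Frobenius structure: the theta--squared functions $x,y,z$ have modular weight one and $w$ has weight two, which fixes their transformation under the $\tau$--scaling underlying this action and is consistent with $t_{-1}$ carrying weight two. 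Consequently each monomial of $\mathrm{P}^{\PP^1_{4,4,2}}_{poly}$ carries the same total weight, and collecting the overall factor yields exactly the claimed identity with prefactor $\alpha^{-2}$. Once the polynomiality established in the previous step is in hand, this last part is pure weight bookkeeping, forced term by term by the degree axiom, with no further computation required.
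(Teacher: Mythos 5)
The paper itself does not prove this proposition: it is imported from \cite{BP} (Appendix~A) and \cite{SZ1} (Section~3.2.3), and both claims are meant to be read off directly from the explicit closed formula for $F_0^{\PP^1_{4,4,2}}$ reproduced in Appendix~\ref{section: appendix GW}. Your proposal is therefore a genuinely different, from--scratch route. Its first half is sound in outline and is essentially the argument of the cited references: the dimension constraint does bound the number of twisted insertions (each contributes at least $1/4$ to the deficiency, so at most eight occur), and closure of $\QQ[x,y,z,w]$ under $q\,\p_q$ is exactly the content of Eq.~\eqref{eq: wdvv x_4 in q} together with $z^2=4xy$. The one soft spot there is that you assert, rather than establish, that WDVV plus string and divisor reconstructs every correlator from $x,y,z,w$; that reconstruction lemma is true but is itself a nontrivial input from \cite{KS,SZ1}.

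The genuine gap is in the homogeneity part. The grading in the statement --- $t_{-1}$ of weight $2$, $t_{i,j}$ of weight $1$, $t_0$ of weight $0$, $x,y,z$ of weight $-1$, $w$ of weight $-2$, with total weight $2$ --- is \emph{not} the Euler/conformal grading of the CohFT, so it is not ``forced by the degree axiom'' as you claim. The quasi--homogeneity axiom gives $\deg t_0=1$, $\deg t_{-1}=0$, $\deg t_{i,j}=1-\deg_{CR}\Delta_{i,j}$ and says nothing about the weights of the coefficient functions: it explains, say, why both $t_{1,3}^8$ and $t_{1,2}^4$ occur (each has Euler degree $2$), but not why their coefficients are isobaric of weights $6$ and $2$ respectively. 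The proposition's grading is the statement that the coefficient of an $m$--fold twisted monomial is quasi--modular of weight $m-2$; this is a theorem of Shen--Zhou, not a consequence of the axioms, and the two gradings are logically independent. To close the gap you must either verify the identity term by term on the explicit formula of Appendix~\ref{section: appendix GW} (which is what the paper implicitly does), or carry the weight bookkeeping through your own induction: the initial data $x,y,z$ (three--point, weight $1$) and $w$ (four--point, weight $2$) have the correct weights, and one checks that the WDVV recursion and the identities \eqref{eq: wdvv x_4 in q} are homogeneous for this grading.
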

  To make the exposition complete, we give also the potential $F_0^{\PP^1_{4,4,2}}$ in Appendix~\ref{section: appendix GW}.

  In what follows the function $z(q)$ will be sometimes skipped because the following identity holds:
  $$
    z(q)^2 = 4 x(q) y(q).
  $$
  It was found by Shen--Zhou \cite{SZ1} that
  WDVV equation on this genus $0$ potential is equivalent to the following system (written in the Novikov variable)
  \begin{equation}\label{eq: wdvv x_4 in q}
  \begin{aligned}
    q \frac{\p}{\p q} x(q) &= 2 x(q) y(q)^2 -x(q) (x(q)^2 -w(q) ),
    \\
    q \frac{\p}{\p q} y(q) &= 2 x(q)^2 y(q)-y(q) (x(q)^2 -w(q)),
    \\
    q \frac{\p}{\p q} w(q) &= w(q)^2-x(q)^4.
  \end{aligned}
  \end{equation}
  The functions $\vartheta_k(q)$ and $\psi_k(q)$ are connected by the certain equalities (see Appendix~\ref{appendix A}).
  Using also double argument formulae for $\vartheta_k$ and comparing the formal series expansions we find:
  \begin{equation}\label{eq: xyz via Halphen}
    \begin{aligned}
      x(q) &= \frac{1}{2} \left(\sqrt{2\psi _2 (q^4)-2\psi _4(q^4)}+ \sqrt{2\psi _2(q^4)-2\psi _3(q^4)}\right),
      \\
      y(q) &= \frac{1}{2} \left( \sqrt{2\psi _2(q^4)-2\psi _4(q^4)}- \sqrt{2\psi _2(q^4)-2\psi _3(q^4)}\right),
      \\
      w(q) &= \psi _2(q^4)+ \frac{1}{2}\psi _3(q^4)+ \frac{1}{2}\psi _4(q^4)+ \sqrt{(\psi _2(q^4)-\psi _3(q^4))(\psi _2(q^4)-\psi _4(q^4))}.
    \end{aligned}
  \end{equation}
  The square roots in the equation above can be unambiguously resolved as being applied to the formal power series in $q$ with the $\QQ_+$ coefficients.
  \begin{proposition}
    WDVV equation on the genus $0$ GW potential of $\PP^1_{4,4,2}$ is equivalent to the Halphen's system of equations.
  \end{proposition}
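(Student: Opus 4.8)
The plan is to reduce the statement to a finite computation by composing two facts already at our disposal. First, by the result of Shen--Zhou recalled just above, the WDVV equation for $F_0^{\PP^1_{4,4,2}}$ is equivalent to the system~\eqref{eq: wdvv x_4 in q} for the triple $(x,y,w)$. Hence it is enough to prove that~\eqref{eq: wdvv x_4 in q} is equivalent, under the change of variables~\eqref{eq: xyz via Halphen}, to the Halphen's system~\eqref{eq: Halp in q}. In other words I would treat the proposition as the assertion that two ordinary differential systems are intertwined by an invertible algebraic substitution.

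Next I would make the substitution~\eqref{eq: xyz via Halphen} explicit and, crucially, \emph{invertible}. Writing $a:=\psi_2(q^4)$, $b:=\psi_3(q^4)$, $c:=\psi_4(q^4)$, the first two lines of~\eqref{eq: xyz via Halphen} give $(x+y)^2 = 2(a-c)$ and $(x-y)^2 = 2(a-b)$, while the stated $\QQ_+$--positivity of the relevant power series fixes $\sqrt{(a-b)(a-c)} = (x^2-y^2)/2$ in the third line. Solving this linear system for $a,b,c$ yields the clean polynomial inverse
\begin{equation*}
  a = \frac{w+y^2}{2}, \qquad b = \frac{w-x^2+2xy}{2}, \qquad c = \frac{w-x^2-2xy}{2},
\end{equation*}
so that both the substitution and its inverse are algebraic and the Jacobian is generically non--degenerate. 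I would also record that, since the argument $q^2$ appearing in the Halphen solution $\psi_k(q^2)$ of~\eqref{eq: Halp in q} is replaced by $q^4$ in~\eqref{eq: xyz via Halphen}, the triple $(a,b,c)$ satisfies the Halphen's system~\eqref{eq: Halp in q} with all right--hand sides multiplied by an overall factor $2$; this harmless rescaling of the independent variable must simply be carried through the computation.

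The verification itself is then a direct chain--rule check. I would apply $q\p_q$ to the polynomial expressions for $a,b,c$, substitute $q\p_q x$, $q\p_q y$, $q\p_q w$ from~\eqref{eq: wdvv x_4 in q}, and confirm that the outcome equals the factor--$2$ Halphen right--hand sides $2(a(b+c)-bc)$, and cyclically. For instance $q\p_q a = \tfrac12\bigl(q\p_q w + 2y\,q\p_q y\bigr) = \tfrac12\bigl(w^2 - x^4 + 2y^2(x^2+w)\bigr)$, which indeed matches $2(a(b+c)-bc)$ once one uses $b+c = w-x^2$ and $4bc = (w-x^2)^2 - 4x^2y^2$; the remaining two identities are entirely analogous. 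Because the substitution is invertible, this single computation establishes both implications simultaneously.

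The main obstacle I anticipate is not the algebra but the careful bookkeeping around the square roots and the rescaling of the Novikov variable: one must fix the correct branch of each square root in~\eqref{eq: xyz via Halphen}, which is justified by the stated positivity of the formal series, and one must not drop the overall factor $2$ relating the $q^4$--argument to the Halphen normalization of~\eqref{eq: Halp in q}. Once these two points are pinned down, passing to the polynomial inverse turns the whole equivalence into a routine matching of rational expressions in $x,y,w$.
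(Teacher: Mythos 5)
Your proposal is correct and follows exactly the route the paper intends: its proof is the one--line remark that the claim is ``an easy computation by using Eq.~\eqref{eq: wdvv x_4 in q} and Eq.~\eqref{eq: xyz via Halphen}'', and you have simply carried out that computation in full. Your explicit polynomial inverse $a=\tfrac{w+y^2}{2}$, $b=\tfrac{w-x^2+2xy}{2}$, $c=\tfrac{w-x^2-2xy}{2}$, together with the bookkeeping of the factor $2$ from the $q^4$ versus $q^2$ arguments, is exactly what is needed to make the two--way equivalence rigorous, and the chain--rule identities you verify do check out.
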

  \begin{proof}
    This is an easy computation by using Eq.~\eqref{eq: wdvv x_4 in q} and Eq.~\eqref{eq: xyz via Halphen}.
  \end{proof}

  It was found in \cite{SZ1}, that the WDVV equation for the other elliptic orbifolds, $\PP^1_{3,3,3}$ and $\PP^1_{6,3,2}$, can be written in the form similar to Eq.~\eqref{eq: wdvv x_4 in q}. So, there is a special system of ODE's in $q$ for each elliptic orbifold, that is equivalent to the WDVV equation.
  This is not a subject of this paper, however there is a strong evidence to conjecture that WDVV equation for the genus zero potentials of GW theory of all elliptic orbifolds (namely, for $\PP^1_{3,3,3}$ and $\PP^1_{6,3,2}$  too) is also equivalent to Halphen's system of equations. Namely, we believe, that there is a proposition like the one above for the other two elliptic orbilds too.
    
  \begin{notation}\label{notation: X_4}
    Fixing some branch of the square root, denote $\lambda_2:= \sqrt{\pi \sqrt{-1}}$ and $\lambda_4:= \lambda_2/\sqrt{2}$. We have then $\lambda_2^2 = 2\pi \sqrt{-1}/$ and $\lambda_4^2 = 2 \pi \sqrt{-1} /4$.
    For $q(\tau) = \exp \left(\frac{2 \pi \sqrt{-1}}{4}\tau \right)$ introduce the functions:
  \begin{align*}
    x^\infty(\tau) = \lambda_4 \cdot x(q(\tau)), &\quad y^\infty(\tau) = \lambda_4 \cdot y(q(\tau)),   
    \quad
    z^\infty(\tau) = \lambda_4 \cdot z(q(\tau)), 
    \\
    & w^\infty(\tau) = \lambda_4^2 \cdot w(q(\tau)).
  \end{align*}      
    Recall Proposition~\ref{prop: gw potential basics}. We call the function $F_{an}^{\PP^1_{4,4,2}}$ the analytic potential of $\PP^1_{4,4,2}$:
    $$
      F_{an}^{\PP^1_{4,4,2}}(t_0,\tau,t_{i,j}) := \mathrm{P}_{poly}^{\PP^1_{4,4,2}}(t_0,\tau,t_{i,j}, x^\infty(\tau), y^\infty(\tau), z^\infty(\tau), w^\infty(\tau)).
    $$
    Namely $F_{an}^{\PP^1_{4,4,2}}(t_0,\tau,t_{i,j})$ is obtained by substituting $t_{-1} = \tau$, $x^\infty(\tau)$ instead of $x(q)$ and so on.
  \end{notation}

  \begin{proposition}
    The function $F_{an}^{\PP^1_{4,4,2}}(\tau)$ is holomorphic on $\CC^8\times\HH$ and is a solution to WDVV equation.
  \end{proposition}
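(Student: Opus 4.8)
The plan is to deduce both assertions from the corresponding properties of the genuine potential $F_0^{\PP^1_{4,4,2}}$, in close parallel with the $\PP^1_{2,2,2,2}$ case, the main tool being the homogeneity of $\mathrm{P}^{\PP^1_{4,4,2}}_{poly}$ recorded in Proposition~\ref{prop: gw potential basics}. For holomorphicity I would first note that $q(\tau)=\exp(2\pi\sqrt{-1}\tau/4)$ maps $\HH$ into the punctured disc $\{0<|q|<1\}$, on which $\vartheta_2,\vartheta_3,\vartheta_4$ and $f$ are holomorphic; hence the functions $x^\infty,y^\infty,z^\infty,w^\infty$ of Notation~\ref{notation: X_4} are holomorphic on $\HH$. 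Since $\mathrm{P}^{\PP^1_{4,4,2}}_{poly}$ is a polynomial in its arguments, $F_{an}^{\PP^1_{4,4,2}}$ is holomorphic on $\CC^8\times\HH$, the eight complex factors being $t_0$ and the seven coordinates $t_{i,j}$.

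For the WDVV equation I would first rewrite $F_{an}$ through $F_0$. Applying the homogeneity of Proposition~\ref{prop: gw potential basics} with $\alpha=\lambda_4$ to the definition in Notation~\ref{notation: X_4}, and using $\lambda_4^2=2\pi\sqrt{-1}/4$ so that $\exp(\lambda_4^2\tau)=q(\tau)$, one gets
\begin{equation*}
  F_{an}^{\PP^1_{4,4,2}}(t_0,\tau,t_{i,j}) \;=\; \lambda_4^{-2}\, F_0^{\PP^1_{4,4,2}}\bigl(t_0,\,\lambda_4^2\tau,\,\lambda_4 t_{i,j}\bigr),
\end{equation*}
where on the right the Novikov variable is $q=\exp(\lambda_4^2\tau)=q(\tau)$. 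Thus $F_{an}$ is obtained from $F_0$ by the diagonal linear change $A=\mathrm{diag}(1,\lambda_4^2,\lambda_4,\dots,\lambda_4)$ of the coordinates (with $t_0$ fixed) together with the overall factor $c=\lambda_4^{-2}$. The exponents in Proposition~\ref{prop: gw potential basics} are tuned precisely so that $A$ acts conformally on the pairing, $\eta(Av,Aw)=\lambda_4^2\,\eta(v,w)$ --- indeed the nonzero entries of $\eta$ pair $t_0$ with $t_{-1}$ (weights $1$ and $\lambda_4^2$) and pair the twisted directions among themselves (weights $\lambda_4$ and $\lambda_4$), in every case giving the product $\lambda_4^2$ --- while $c=\lambda_4^{-2}$ restores the metric, so that $F_{an}$ has the same classical cubic part, hence the same pairing $\eta$, as $F_0$. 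A direct chain--rule computation then shows that each third derivative of $F_{an}$ is $c\,A_iA_jA_k$ times that of $F_0$, and since $A_pA_q=\lambda_4^2$ on the support of $\eta^{pq}$ the two sides of \eqref{eq: wdvv} for $F_{an}$ reduce to those for $F_0$ up to the common, $j\leftrightarrow k$--symmetric factor $c^2\lambda_4^2 A_iA_jA_kA_l$. As $F_0^{\PP^1_{4,4,2}}$ solves \eqref{eq: wdvv}, so does $F_{an}$.

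Alternatively, and in closer parallel with the proof for $\PP^1_{2,2,2,2}$, one may argue through the equivalent system \eqref{eq: wdvv x_4 in q}: WDVV for the ansatz $\mathrm{P}^{\PP^1_{4,4,2}}_{poly}$ in the variable $\tau$ is equivalent, by the Shen--Zhou computation, to \eqref{eq: wdvv x_4 in q} with $q\tfrac{\p}{\p q}$ replaced by $\tfrac{\p}{\p\tau}$, and one checks that $x^\infty,y^\infty,z^\infty,w^\infty$ solve this $\tau$--system. Since $q(\tau)=\exp(\lambda_4^2\tau)$ gives $\p_\tau=\lambda_4^2\,q\p_q$, substituting $x=\lambda_4^{-1}x^\infty$, $y=\lambda_4^{-1}y^\infty$, $z=\lambda_4^{-1}z^\infty$, $w=\lambda_4^{-2}w^\infty$ into \eqref{eq: wdvv x_4 in q} balances all powers of $\lambda_4$ (every cubic monomial on the right contributes $\lambda_4^{-3}$, matching the $\lambda_4^{3}$ from the left), yielding for instance $\p_\tau x^\infty = 2x^\infty(y^\infty)^2-x^\infty((x^\infty)^2-w^\infty)$. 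The only genuinely delicate point in either route is this bookkeeping of the scaling factors $\lambda_4,\lambda_4^2$, which is exactly why they were built into Notation~\ref{notation: X_4}; once it is checked, WDVV for $F_{an}$ follows from that for $F_0$.
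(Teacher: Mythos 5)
Your proposal is correct, and it supplies exactly the details that the paper's one‑line proof (``The proof is straightforward'') leaves implicit: holomorphicity of the theta constants and of $f$ on $\{0<|q|<1\}$, and the fact that the rescaling built into Notation~\ref{notation: X_4} is, by the homogeneity of Proposition~\ref{prop: gw potential basics}, a diagonal linear change of coordinates with an overall factor that preserves the cubic terms and multiplies $A_pA_q$ by the constant $\lambda_4^2$ on the support of $\eta$, so WDVV is inherited from $F_0^{\PP^1_{4,4,2}}$. Both of your routes (the coordinate‑change argument and the check that $x^\infty,y^\infty,z^\infty,w^\infty$ solve the $\tau$‑version of Eq.~\eqref{eq: wdvv x_4 in q}) are sound and consistent with what the paper intends.
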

  \begin{proof}
    The proof is straightforward
  \end{proof}
  
  It's important to note that we can write the function $F_{an}^{\PP^1_{4,4,2}}$ via the functions $X_k^\infty(\tau)$ too by using the following formulae.
  \begin{equation}\label{eq: xyz via HalpT}
    \begin{aligned}
    x^\infty(\tau) &=  \frac{1}{2} \left(\sqrt{\left(X^\infty_2(\tau) -X^\infty_4(\tau) \right)} + \sqrt{\left(X^\infty_2(\tau) -X^\infty_3(\tau) \right)}\right),
    \\
    y^\infty(\tau) &=  \frac{1}{2} \left(\sqrt{\left(X^\infty_2(\tau) -X^\infty_4(\tau) \right)} -\sqrt{\left(X^\infty_2(\tau) -X^\infty_3(\tau) \right)} \right),
    \\
    z^\infty(\tau) &=  \sqrt{\left(X^\infty_3(\tau) -X^\infty_4(\tau) \right)},
    \\
    w^\infty(\tau) &=  \frac{1}{4}\Bigg(2 X^\infty_2(\tau) +X^\infty_3(\tau) + X^\infty_4(\tau) 
    \\
    &\quad\quad\quad  +2 \sqrt{ \left(X^\infty_2(\tau) -X^\infty_3(\tau) \right)\left(X^\infty_2(\tau) -X^\infty_4(\tau) \right)}\Bigg),
    \end{aligned}
  \end{equation}
  where we choose the square root branch as for $x(q),y(q),z(q),w(q)$ in Eq.~\eqref{eq: xyz via Halphen} by using relation of Notation~\ref{notation: X_4}.

\section{Group actions of the space of genus CohFT potentials}\label{section: group actions}
  For a fixed state space $(V,\eta)$, consider the space of all CohFTs on it. On this space there is a group action, called \textit{Givental's action}, or \textit{upper--triangular} group action. This was first proposed by Givental \cite{G} in genus zero and later developed by the other researchers in the higher genera \cite{S,FSZ}.
  
  The upper--triangular group is defined to be $\{ R \in \mathrm{End}(V) [[z]] \mid R(z)R(-z)^T = 1 \}$. To its element $R = \exp(r(z))$ one can associate the differential operator $\hat R$, s.t. for any CohFT partition function $\Z$ on $(V,\eta)$, the function $\Z' := \hat R \cdot \Z$, is a partition function of a CohFT on the same state space. The action of the upper--triangular group element is also called \textit{R--action} of Givental.
  
  Similarly to the upper--triangular group, one can consider the action of the \textit{lower--triangular} group := $\{ S \in \mathrm{End}(V) [[z^{-1}]] \mid S(z)S(-z)^T = 1 \}$. The action of this group on a CohFT partition function is equivalent to the linear change of the variables, and probably, addition of some new terms to $\F_0$. The action of the lower--triangular group element is also called \textit{S--action} of Givental. We will denote the $S$--action by $\hat S$.
  
  Givental's action appeared to be a powerful tool in working with the CohFTs last decades. However it's usually hard to compute (namely, to give the function $\tilde \Z := \hat R \cdot \Z$ is a closed form). At the same time, there are the situations, when the other action can be introduced, acting on the smaller space, compared to the Givental's action. Being not that general as Givental's action, it can, however make use of some properties, that are specific for this smaller class of CohFTs.   
  In what follows we will work with this sort of actions.
  
  Finally we formulate our results in terms of Givental's action, as playing de facto the role of a canonical group action on the space of CohFT partition functions.

  \subsection{$\SL2C$--group action on the potentials of elliptic orbifolds}
    Consider a unital CohFT on the state space $(V,\eta)$, s.t. $V = \langle e_1,\dots,e_n\rangle$, the unit vector is $e_1$ and $\eta_{1,\alpha} = \delta_{\alpha,n}$.
    Then $F_0(\bt)$, the primary genus $0$ potential, reads:
    $$
      F_0(t_1,\dots,t_n) = \frac{t_1^2t_n}{2} + t_1 \sum_{1 < \alpha \le \beta < n} \eta_{\alpha,\beta} \frac{t_\alpha t_\beta}{|\mathrm{Aut}(\alpha,\beta)|} + H(t_2,\dots,t_n),
    $$
    where $|\mathrm{Aut}(\alpha,\beta)| = 2$ if $\alpha = \beta$ and $1$ otherwise.
    
    For any $A \in \SL2C$ consider another function $F_0^A = F_0^A(t_1,\dots,t_n)$.
    \begin{equation}\label{eq:SLAction}
      \begin{aligned}
	F_0^A \left(t_1,\dots,t_n \right) & := \frac{t_1^2t_n}{2} + t_1 \sum_{1 < \alpha \le \beta < n} \eta_{\alpha,\beta} \frac{t_\alpha t_\beta}{|\mathrm{Aut}(\alpha,\beta)|} + \frac{c\left( \sum_{1 < \alpha \le \beta < n} \eta_{\alpha,\beta} \frac{t_\alpha t_\beta}{|\mathrm{Aut}(\alpha,\beta)|} \right)^2}{2(ct_n+d)}
	\\
	& + (ct_n +d)^2 H \left(\frac{t_2}{ct_n + d},\dots,\frac{t_{n-1}}{ct_n + d}, \frac{at_n + b}{ct_n + d} \right)
	\quad
	\text{for}
	\quad
	A = \begin{pmatrix} 
	a & b \\ c & d
	\end{pmatrix}.
      \end{aligned}
    \end{equation}
    It's not hard to see that $F_0^A$ is solution to WDVV equation and hence a genus $0$ primary potential of some CohFT. 

    It was shown in \cite{B} that the $\SL2C$--action $F_0 \to F_0^A$ can be written via the Givental's R--action. In what follows for any CohFT partition function $\Z$ and any Givental's upper-- or lower--triangular group element $X$ we use the notation 
    $$
      \hat X \cdot F_0 := \mathrm{res}_{\hbar} \left( \hat X \cdot \Z \right) 
    $$ 
    where $F_0 = \mathrm{res}_{\hbar} \left( \Z \right)$. This notation can also be supported by the fact that only genus zero correlators of the initial CohFT contribute to the genus zero correlators of the Givental--transformed CohFT.
    
    For a function $f(\bt)$ we denote by $\left(f(\bt)\right)_p$ the expansion of it at the point $\bt = p$.
    \begin{theorem}[Theorem~3 and Section~5 in \cite{B}]\label{theorem: SL2C to Givental}
    Fix some
    $A = \begin{pmatrix}
	  a & b \\
	  c & d
	\end{pmatrix}
      \in {\rm SL}(2, \mathbb C)
    $ and $\tau \in \CC$, s.t. $c\tau + d\neq 0$. Fix a CohFT with the primary genus zero potential $F_0(\bt)$.
    Let $F_0(\bt)$ and $F_0^A(\bt)$ be convergent in some small neighborhoods of $p_1 := (0,\dots,0, A \cdot \tau)$ and of $p_2 := (0,\dots,0,\tau)$ respectively. 
    For $\sigma := -c(c\tau+d)$, $\sigma^\prime := -c/(c\tau+d)$ 
    holds:
      \begin{align*}
	\left(F^A_0\right)_{p_2} &=  \left( \hat S_0^A \right)^{-1} \cdot \hat R^{\sigma} \cdot \left( F_0\right){p_1},
 	\\
 	\left(F^A_0\right)_{p_2} &=  \hat R^{\sigma^\prime}  \cdot \left( \hat S_0^A \right)^{-1}\cdot \left( F_0\right){p_1},
      \end{align*}
      where 
      \begin{equation*}
      R^\sigma(z) := 
	\exp( \left(
	\begin{array}{c c c}
	  0 & \dots & \sigma \\
	  \vdots & 0 & \vdots \\
	  0 & \dots & 0
	\end{array}
      \right) z),
      \quad
	S_0^A := 
	\left(
	\begin{array}{c c c}
	  1 & \dots & 0 \\
	  \vdots & (c\tau+d) I_{n-2} & \vdots \\
	  0 & \dots & (c\tau+d)^2
	\end{array}
	\right).
    \end{equation*}
    
\end{theorem}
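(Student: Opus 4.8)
The plan is to carry out the entire argument at \emph{genus zero}. This is legitimate because $\mathrm{res}_{\hbar}$ records only the $\hbar^{-1}$-coefficient of $\hat R^{\sigma}\cdot\hat S_0^A\cdot\Z$, and the triangular structure of Givental's action guarantees that this coefficient depends on $\Z$ only through its own genus-zero part $\F_0$ (the higher-genus data of $\Z$ decouples, as noted after the definition of the actions). I may therefore replace the operators $\hat R^{\sigma}$ and $\hat S_0^A$ by their explicit genus-zero incarnations: the lower-triangular $\hat S_0^A$ acts on the small phase space as the linear change of coordinates by the \emph{constant} matrix $S_0^A$, while the upper-triangular $\hat R^{\sigma}$ acts by Givental's tree-level formula, whose distinctive feature is a term quadratic in the first derivatives of the potential, with the contraction dictated by $r_1=\sigma E_{1n}$, together with a linear vector-field term.

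The conceptual backbone is that $S_0^A=\mathrm{diag}\bigl(1,(c\tau+d)I_{n-2},(c\tau+d)^2\bigr)$ is exactly the inverse Jacobian at $t_n=\tau$ of the nonlinear map defining \eqref{eq:SLAction}. Writing $m(t_n)=\tfrac{at_n+b}{ct_n+d}$ for the Möbius transformation of the distinguished variable, one has $m(\tau)=A\cdot\tau$ and, since $\det A=1$, $m'(\tau)=(c\tau+d)^{-2}$, while the middle variables are rescaled by $(ct_n+d)^{-1}\big|_{t_n=\tau}=(c\tau+d)^{-1}$. Thus $(\hat S_0^A)^{-1}$ supplies precisely these Jacobian factors and transports the base point from $p_2=(0,\dots,0,\tau)$ to $p_1=(0,\dots,0,A\cdot\tau)$. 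The first step, then, is to split the transformation law \eqref{eq:SLAction} into three pieces: the weight-two automorphy of the $H$-part, $H\mapsto (ct_n+d)^2 H(\tfrac{t_\bullet}{ct_n+d},m(t_n))$ — the very transformation that ties together the potentials built from the quasimodular forms $X_k^\infty(\tau)$ — the linear rescaling accounting for the Jacobian of this reparametrization, realized by $(\hat S_0^A)^{-1}$, and the purely nonlinear remainder consisting of the Möbius curvature and the correction $\tfrac{c\,Q^2}{2(ct_n+d)}$, where I abbreviate $Q:=\sum_{1<\alpha\le\beta<n}\eta_{\alpha\beta}\,t_\alpha t_\beta/|\mathrm{Aut}(\alpha,\beta)|$.

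The heart of the proof is to identify this nonlinear remainder with the genus-zero R-action $\hat R^{\sigma}$. Rather than resumming the tree sum defining the R-action head-on, I would match \emph{flows} along the lower-triangular one-parameter subgroup $A_s=\bigl(\begin{smallmatrix}1&0\\ s&1\end{smallmatrix}\bigr)$. On one side, differentiating \eqref{eq:SLAction} in $s$ yields an explicit first-order evolution of $F_0$: the infinitesimal Möbius vector field on $t_n$ and the middle variables, plus the $s$-derivative of the $Q^2$-correction. On the other side, the infinitesimal genus-zero R-action generated by $r_1=s\,E_{1n}$ is a differential operator quadratic in $\F_0$; because $E_{1n}$ is rank one, $\eta$-self-adjoint and nilpotent ($E_{1n}^2=0$, whence $R^{\sigma}(z)R^{\sigma}(-z)^T=1$), this operator collapses to a single contraction, and through $\partial_{t_1}F_0=t_1t_n+Q$ it is exactly the $Q^2$-type term that it generates. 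One then verifies that the two evolutions agree and share the initial condition at $s=0$; integrating from $s=0$ to $s=c$ settles the lower-triangular case, and composing with the upper-triangular Borel part — realized by the linear $\hat S_0^A$ and the trivial shift $b$ — gives the general $A$, proving the first displayed identity.

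The second identity follows from the first formally, by commuting $\hat R^{\sigma}$ past $(\hat S_0^A)^{-1}$: a direct computation of the conjugation $(\hat S_0^A)^{-1}\hat R^{\sigma}\hat S_0^A$ rescales the single nonzero entry of $R^{\sigma}=I+\sigma z\,E_{1n}$ by $(c\tau+d)^{-2}$, giving $\hat R^{\sigma'}$ with $\sigma'=(c\tau+d)^{-2}\sigma=-c/(c\tau+d)$, as claimed. I expect the main obstacle to be the genus-zero R-step: making Givental's tree-level action fully explicit for $r_1=\sigma E_{1n}$ and controlling the restriction to the small phase space, where the descendent legs $t^{\mu}_{k\ge 1}$ are set to zero but reappear at internal vertices of the trees. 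The rank-one nilpotency of $E_{1n}$ is precisely what should force the a priori infinite graph sum to collapse to the single rational term $\tfrac{c\,Q^2}{2(ct_n+d)}$ and to the geometric series resumming to the Möbius denominator $ct_n+d$; pinning this collapse down cleanly — equivalently, proving the flow-matching rigorously — is the crux.
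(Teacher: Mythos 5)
First, a point of reference: the paper does not prove this statement at all --- it is imported wholesale from \cite{B} (``Theorem~3 and Section~5''), so there is no internal proof to compare against. Judged on its own, your proposal correctly identifies the strategy that the cited reference actually uses: decompose $A$ into the translation (trivial shift), the diagonal part (realized by $S_0^A$, which is indeed the inverse Jacobian of the M\"obius reparametrization at $t_n=\tau$), and the unipotent lower--triangular part, and then identify the residual nonlinear correction $\tfrac{cQ^2}{2(ct_n+d)}$ of \eqref{eq:SLAction} with the tree--level action of the rank--one nilpotent $R$--matrix. The flow--matching along $A_s=\bigl(\begin{smallmatrix}1&0\\ s&1\end{smallmatrix}\bigr)$ is a legitimate packaging of that computation. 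However, what you have written is a plan rather than a proof, and the two places where you defer the work are exactly the places where the content lives. When you say that the quadratic term of the infinitesimal $R$--action, fed with $\partial_{t_1}F_0=t_1t_n+Q$, ``is exactly the $Q^2$--type term that it generates,'' you are suppressing the cross terms $t_1t_n\cdot Q$ and $(t_1t_n)^2$, which must be cancelled against the linear vector--field part of the action and the dilaton shift; likewise the reappearance of descendent legs at internal vertices after restriction to the small phase space is acknowledged but not resolved. These are not routine.

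The second identity is where I would press hardest. You claim that conjugation $(\hat S_0^A)^{-1}\hat R^{\sigma}\hat S_0^A$ ``rescales the single nonzero entry of $R^{\sigma}$ by $(c\tau+d)^{-2}$.'' Taken at face value, with $S_0^A=\mathrm{diag}\bigl(1,(c\tau+d)I_{n-2},(c\tau+d)^2\bigr)$ and $E_{1n}$ the elementary matrix in row $1$, column $n$, one has $(S_0^A)^{-1}E_{1n}S_0^A=(S_0^A)^{-1}_{11}(S_0^A)_{nn}\,E_{1n}=(c\tau+d)^{+2}E_{1n}$, which would give $\sigma''=-c(c\tau+d)^3$ rather than $\sigma'=-c/(c\tau+d)$. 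The correct power $(c\tau+d)^{-2}$ does come out, but only after one fixes whether the quantized $\hat S_0^A$ acts on potentials by substituting $S_0^A\bt$ or $(S_0^A)^{-1}\bt$, and after raising an index of $r_1$ with $\eta$ (note $\eta_{1\alpha}=\delta_{\alpha n}$ swaps the roles of the first and last slots, which is precisely what flips the sign of the exponent). Since your final answer matches the theorem but your stated computation does not, this step needs to be redone with the conventions pinned down; as written it is the one concrete error in the proposal, and the tree--level collapse is the one concrete omission.
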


    The theorem above has an extension to the higher genera too (Theorem~3 in \cite{B}), we just don't give it here because at the moment it doesn't play a role. Note that the expansion of the potential at some point can be viewed as an S--action of Givental.

    In \cite[Theorem~6]{B} it was shown, that the $\SL2C$--action above is equivalent to the primitive form change for the simple--elliptic singularities. Due to this fact we don't need to consider the action of full upper--triangular group for the CY/LG correspondence when assuming simple--elliptic singularities only --- the $\SL2C$--action above is the enough. Big advantage of it is clear from the following sections.
    
  \subsection{$\SL2C$--action on the space of Halphen's system solutions}
    For any $A \in \SL2C$ the triple of functions $\{X_2^A(\tau),X_3^A(\tau),X_4^A(\tau)\}$ defined as follows is a solution to the Halphen's system of equations \eqref{eq: Halp in tau} too\footnote{this can be easily checked by hands}.
    \begin{equation}\label{eq: SL action on Halp}
      X_k^A(\tau) := \frac{1}{(c \tau + d)^2} X_k^\infty \left( \frac{a \tau + b}{c \tau + d} \right) + \frac{c}{c \tau + d}, 
      \quad 
      A = \begin{pmatrix}
	a & b \\ c & d
      \end{pmatrix}.
    \end{equation}
    Recall that the analytic genus zero GW potentials of $\PP^1_{4,4,2}$ and $\PP^1_{2,2,2,2}$ are written via the functions $X_k^\infty(\tau)$, and the WDVV equation on them is equivalent to the Halphen's system of equations. Consider the new functions:
    \begin{align*}
      & A \cdot F_{an}^{\PP^1_{2,2,2,2}} := F_{an}^{\PP^1_{2,2,2,2}} \mid_{ \left[ \{X_2^\infty,X_3^\infty, X_4^\infty\} \to \{X_2^A,X_3^A, X_4^A\} \right]},
      \\
      & A \cdot F_{an}^{\PP^1_{4,4,2}} := F_{an}^{\PP^1_{4,4,2}} \mid_{ \left[ \{X_2^\infty,X_3^\infty, X_4^\infty\} \to \{X_2^A,X_3^A, X_4^A\} \right]},
    \end{align*}
    obtained by substituting one solution to the Halphen's system $\{X_2^\infty,X_3^\infty, X_4^\infty\}$ by the other $\{X_2^A,X_3^A, X_4^A\}$. These functions will also be solutions to the WDVV equation and define the same pairing as the previous two.

    The following proposition connects the $\SL2C$--action of Eq.~\eqref{eq:SLAction} (on the space of WDVV equation solutions) with the $\SL2C$--action of Eq.~\eqref{eq: SL action on Halp} (on the space of Halphen's equation solutions).
    \begin{proposition}\label{proposition: SL action general to Halp}
      For any $A \in \SL2C$, the action of it on $F_{an}^{\PP^1_{2,2,2,2}}$ and $F_{an}^{\PP^1_{4,4,2}}$ via Eq.\eqref{eq:SLAction} is equivalent to the action of $A$ on the triple $\{X_2^\infty,X_3^\infty, X_4^\infty\}$ as is Eq.\eqref{eq: SL action on Halp}:
      \begin{align*}
	\left( F_{an}^{\PP^1_{2,2,2,2}} \right)^A &= F_{an}^{\PP^1_{2,2,2,2}} \mid_{ \left[ \{X_2^\infty,X_3^\infty, X_4^\infty\} \to \{X_2^A,X_3^A, X_4^A\} \right]},
	\\
	\left( F_{an}^{\PP^1_{4,4,2}} \right)^A &= F_{an}^{\PP^1_{4,4,2}} \mid_{ \left[ \{X_2^\infty,X_3^\infty, X_4^\infty\} \to \{X_2^A,X_3^A, X_4^A\} \right]}
      \end{align*}
    \end{proposition}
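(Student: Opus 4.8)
The plan is to prove both equalities by a direct comparison of the two explicit prescriptions, exploiting that each analytic potential already has the canonical shape required by Eq.~\eqref{eq:SLAction}. Taking $t_1:=t_0$ (the unit) and $t_n:=t_{-1}=\tau$ (the variable dual to the top class), both $F_{an}^{\PP^1_{2,2,2,2}}$ and $F_{an}^{\PP^1_{4,4,2}}$ read $\tfrac12 t_0^2\tau + t_0\,Q + H$, where $Q:=\sum_{1<\alpha\le\beta<n}\eta_{\alpha\beta}\tfrac{t_\alpha t_\beta}{|\mathrm{Aut}(\alpha,\beta)|}$ is the quadratic form of the twisted pairing and $H$ collects the remaining terms, whose entire $\tau$--dependence enters through the Halphen functions $X_k^\infty(\tau)$ --- directly for $\PP^1_{2,2,2,2}$ (Notation~\ref{notation: analytic potential X2}), and through $x^\infty,y^\infty,z^\infty,w^\infty$, which are algebraic in the $X_k^\infty$ by Eq.~\eqref{eq: xyz via HalpT}, for $\PP^1_{4,4,2}$.

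First I would compute the right-hand side of Eq.~\eqref{eq:SLAction}. The leading part $\tfrac12 t_0^2\tau + t_0 Q$ is untouched, so it suffices to analyse the transformed $H$ together with the extra term $\tfrac{c\,Q^2}{2(c\tau+d)}$. The rescaling $t_\alpha\mapsto t_\alpha/(c\tau+d)$ of the twisted variables, combined with $\tau\mapsto\frac{a\tau+b}{c\tau+d}$ and the overall factor $(c\tau+d)^2$, acts on each monomial of $H$ exactly so as to send $X_k^\infty(\tau)\mapsto (c\tau+d)^{-2}X_k^\infty\!\big(\tfrac{a\tau+b}{c\tau+d}\big)$. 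Comparing with Eq.~\eqref{eq: SL action on Halp}, this is precisely $X_k^A(\tau)$ minus the additive constant $\tfrac{c}{c\tau+d}$, the same for every $k$. Hence the two prescriptions differ only through the effect of this constant shift, and the whole problem reduces to showing that this shift is compensated by the extra term $\tfrac{c\,Q^2}{2(c\tau+d)}$.

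For $\PP^1_{2,2,2,2}$ the function $H$ is linear in the $X_k^\infty$, so the shift contributes $\tfrac{c}{c\tau+d}\cdot H|_{X_2^\infty=X_3^\infty=X_4^\infty=1}$, and the matching boils down to the purely algebraic identity $H|_{X_2^\infty=X_3^\infty=X_4^\infty=1}=-\tfrac12 Q^2$; expanding the quartic terms of Notation~\ref{notation: analytic potential X2} one finds both sides equal to $-\tfrac{1}{32}\big(\sum_k t_k^2\big)^2$, which also fixes the sign. For $\PP^1_{4,4,2}$ the key observation is that the common shift $\tfrac{c}{c\tau+d}$ cancels in every difference $X_j^\infty-X_k^\infty$; by Eq.~\eqref{eq: xyz via HalpT} this forces $x^\infty,y^\infty,z^\infty$ to transform homogeneously (multiplication by $(c\tau+d)^{-1}$ after $\tau\mapsto\frac{a\tau+b}{c\tau+d}$) while only $w^\infty$ acquires the additive shift $\tfrac{c}{c\tau+d}$. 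Feeding this into the homogeneity property of $\mathrm{P}_{poly}^{\PP^1_{4,4,2}}$ from Proposition~\ref{prop: gw potential basics} (with $\alpha=c\tau+d$) reproduces the $\SL2C$--rescaling of the variables, and the residual $w$--shift is absorbed by the same extra term $\tfrac{c\,Q^2}{2(c\tau+d)}$, via the analogue of the identity above.

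I expect the main obstacle to be this last compensating identity in the $\PP^1_{4,4,2}$ case: there $H$ is a genuine polynomial in $x^\infty,y^\infty,z^\infty,w^\infty$ (given explicitly in the Appendix), so one must track how the additive $w$--shift interacts simultaneously with the homogeneity of $\mathrm{P}_{poly}^{\PP^1_{4,4,2}}$ and with the quadratic form $Q$, and verify that the $w$--linear part of $\mathrm{P}_{poly}^{\PP^1_{4,4,2}}$ collapses to the correct multiple of $Q^2$. The $\PP^1_{2,2,2,2}$ case is by contrast a short direct check, and once the $\PP^1_{4,4,2}$ identity is established the proposition follows for both potentials at once.
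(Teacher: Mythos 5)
Your proposal follows essentially the same route as the paper's own proof: observe that the additive shift $\tfrac{c}{c\tau+d}$ cancels in the differences $X_j^\infty-X_k^\infty$, so that $x^\infty,y^\infty,z^\infty$ transform homogeneously and only $w^\infty$ (resp.\ the linear $X_k$--dependence for $\PP^1_{2,2,2,2}$) picks up the shift; match the homogeneous factors via Proposition~\ref{prop: gw potential basics}; and absorb the shift into the extra $\tfrac{cQ^2}{2(c\tau+d)}$ term of Eq.~\eqref{eq:SLAction} by noting that the $w$--linear part of the potential is proportional to $Q^2$. Your version is in fact somewhat more explicit than the paper's (which only sketches the $\PP^1_{4,4,2}$ case), and your verification $H|_{X_2^\infty=X_3^\infty=X_4^\infty=1}=-\tfrac{1}{32}\bigl(\sum_k t_k^2\bigr)^2$ for $\PP^1_{2,2,2,2}$ is correct.
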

    \begin{proof}
      This is easy to see from the explicit form of the potential $F_0^{\PP^1_{4,4,2}}$ (see Appendix~\ref{section: appendix GW}), Eq.~\eqref{eq: xyz via Halphen} and Proposition~\ref{prop: gw potential basics}.
      
      In particular for the first step we see that the functions $x^\infty(\tau)$, $y^\infty(\tau)$, $z^\infty(\tau)$ only get the factor of $(c \tau +d)^{-1}$ if one substitutes $X_k^\infty$ by $X_k^A$ while the function $w^\infty(\tau)$ gets indeed an additional summand of $c/(c \tau +d)$. For the second step we note that the functions $x^\infty$, $y^\infty$, $z^\infty$ come to the potential so that the factor of $(c \tau +d)^{-1}$ matches the formula of Eq.~\eqref{eq:SLAction} by Proposition~\ref{prop: gw potential basics}. And for the last step we note that this is only the function $w^\infty(\tau)$, that appears with the factor of $t_it_jt_kt_l$ s.t. $\eta(\p_{t_k}, \p_{t_l}) \eta(\p_{t_i}, \p_{t_j}) \neq 0$. Hence the additional summand it gets corresponds exactly to the additional summand of Eq.~\eqref{eq:SLAction}. 
    \end{proof}

    Due to this proposition we will use the notations $A \cdot F$ and $F^A$ without making difference between them.
    
    \begin{notation}\label{notation: xAyAzAwA}
    For any $A \in \SL2C$ denote by $x^A(\tau)$,$y^A(\tau)$,$z^A(\tau)$ and $w^A(\tau)$ the functions obtained from $x^\infty(\tau)$,$y^\infty(\tau)$,$z^\infty(\tau)$ and $w^\infty(\tau)$ by the substitution of the proposition above as in Eq.~\eqref{eq: xyz via HalpT}.
    \end{notation}

    The following proposition makes the connection between the $\SL2C$--actions on $F_{an}^{\X_k}$ and $F_0^{\X_k}$(see also Proposition~4.6 in \cite{BP}).
    \begin{proposition}\label{proposition: sl Fan to F}
      For any $A \in \SL2C$ consider the genus zero potential $F_0^{\X_k} = F_0^{\X_k}(\bt)$ of $\X_k$ written in the formal variables $\bt$  and the analytic potential $F_{an}^{\X_k}(\tau)$.
      Let $\lambda_k = \sqrt{2\pi\sqrt{-1}/k}$ be as in Notation~\ref{notation: X_4}. The following relation holds:
      $$
	A \cdot F_{an}^{\X_k}(\tau) = \left( A' \cdot F_0^{\X_k}(\bt) \right)\mid_{t_{-1} = \tau}.
      $$
      where for 
      $
	A = 
	\begin{pmatrix}
	  a & b
	  \\
	  c & d
	\end{pmatrix}$, we set $
	A' := 
	\begin{pmatrix}
	  a\lambda_k & b\lambda_k
	  \\
	  c\lambda_k^{-1} & d\lambda_k^{-1}
	\end{pmatrix}.
      $
    \end{proposition}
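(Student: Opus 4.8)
The plan is to reduce the statement to the single observation that the passage from the formal potential $F_0^{\X_k}$ to the analytic potential $F_{an}^{\X_k}$ is itself an instance of the $\SL2C$--action of Eq.~\eqref{eq:SLAction}, namely the action of the diagonal matrix
$$
  D_k := \begin{pmatrix} \lambda_k & 0 \\ 0 & \lambda_k^{-1} \end{pmatrix} \in \SL2C .
$$
Once this is known, the proposition becomes the composition identity $A \cdot (D_k \cdot F_0^{\X_k}) = (D_k A) \cdot F_0^{\X_k}$ together with the elementary matrix computation $D_k A = A'$. Throughout, $F_0^{\X_k}$ is written in the normal form of Eq.~\eqref{eq:SLAction} with $t_0$ the unit variable and $t_{-1}$ its $\eta$--dual (so that $\tfrac12 t_0^2 t_{-1}$ is the cubic unit term), and I abbreviate the pairing quadratic form by $Q:=\sum_{1<\alpha\le\beta<n}\eta_{\alpha,\beta}\,t_\alpha t_\beta/|\mathrm{Aut}(\alpha,\beta)|$.

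First I would make the relation between the two potentials precise, claiming
$$
  F_{an}^{\X_k}(t_0,\tau,\mathbf t) = \frac{1}{\lambda_k^2}\, F_0^{\X_k}\!\left(t_0,\ \lambda_k^2 \tau,\ \lambda_k \mathbf t\right),
$$
where $\mathbf t$ denotes the twisted--sector variables and $F_0^{\X_k}$ is read with the Novikov identification $q=\exp(t_{-1})=\exp(\lambda_k^2\tau)=q_k(\tau)$ built in. For $\X_4$ this is immediate from the homogeneity property of $\mathrm P_{poly}^{\PP^1_{4,4,2}}$ in Proposition~\ref{prop: gw potential basics} with $\alpha=\lambda_4$, combined with Notation~\ref{notation: X_4}; for $\X_2$ it follows by direct comparison of $F_0^{\PP^1_{2,2,2,2}}$ with $F_{an}^{\PP^1_{2,2,2,2}}$ (Notation~\ref{notation: analytic potential X2}), using $\pi\sqrt{-1}\,\psi_k=X_k^\infty$ and $\lambda_2^2=\pi\sqrt{-1}$. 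The point to verify is that the unit variable $t_0$ is unscaled, $t_{-1}$ scales by $\lambda_k^2$, and each twisted variable by $\lambda_k$. Feeding these scalings into Eq.~\eqref{eq:SLAction} for $D_k$, where $c=0$ so that the quadratic correction term drops out, shows that the cubic unit term $\tfrac12 t_0^2 t_{-1}$ and the pairing term $t_0 Q$ are fixed while the remaining part $H$ is rescaled exactly as above; hence the displayed identity reads $F_{an}^{\X_k} = (D_k\cdot F_0^{\X_k})\big|_{t_{-1}=\tau}$.

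Next I would establish the composition identity by a direct substitution into Eq.~\eqref{eq:SLAction}, rather than invoking a general group--action statement. Applying $A$ to $D_k\cdot F_0^{\X_k}$ and setting $A'=D_kA$, so that $c'=c\lambda_k^{-1}$, $d'=d\lambda_k^{-1}$, the crucial observation is that the denominators match,
$$
  c'\,t_{-1}+d' = \lambda_k^{-1}\,(c\,t_{-1}+d).
$$
Consequently the correction term $\tfrac{c'Q^2}{2(c't_{-1}+d')}$ produced by $A'$ equals the one produced by $A$ acting after $D_k$, while the rescalings $\tfrac{t_j}{c't_{-1}+d'} = \lambda_k\,\tfrac{t_j}{ct_{-1}+d}$ and $\tfrac{a't_{-1}+b'}{c't_{-1}+d'} = \lambda_k^2\,\tfrac{at_{-1}+b}{ct_{-1}+d}$ reproduce precisely the arguments of $H$ coming from the two--step action. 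This yields $A\cdot(D_k\cdot F_0^{\X_k}) = A'\cdot F_0^{\X_k}$; restricting to $t_{-1}=\tau$ and combining with the first step gives $A\cdot F_{an}^{\X_k}(\tau)=(A'\cdot F_0^{\X_k})|_{t_{-1}=\tau}$. Alternatively one may run the identical computation at the level of Halphen solutions via Proposition~\ref{proposition: SL action general to Halp}, where it reduces to the cocycle relation governing Eq.~\eqref{eq: SL action on Halp}.

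The main obstacle is the bookkeeping of the first step: one must pin down the exact power of $\lambda_k$ attached to each variable and check that the Novikov identification $t_{-1}=\lambda_k^2\tau$ is consistent with the scaling $t_{-1}\mapsto\lambda_k^2 t_{-1}$ demanded by $D_k$. Once the identification $F_{an}^{\X_k} = (D_k\cdot F_0^{\X_k})|_{t_{-1}=\tau}$ is secured with the correct constants, the remainder is the purely algebraic matching of the two applications of Eq.~\eqref{eq:SLAction}, which is routine.
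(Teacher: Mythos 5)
Your proposal is correct, and the verification goes through: $D_kA=A'$, the cocycle identity $(F^{D_k})^{A}=F^{D_kA}$ holds for the action of Eq.~\eqref{eq:SLAction} (the Moebius maps, automorphy factors and correction terms all compose in the order you use), and the identification $F_{an}^{\X_k}(t_0,\tau,\mathbf t)=\lambda_k^{-2}F_0^{\X_k}(t_0,\lambda_k^2\tau,\lambda_k\mathbf t)$ is exactly what the homogeneity of $\mathrm{P}_{poly}^{\PP^1_{4,4,2}}$ with $\alpha=\lambda_4$ (resp.\ the relation $\pi\sqrt{-1}\,\psi_j=X_j^\infty$ together with $\lambda_2^2=\pi\sqrt{-1}$) delivers, with the Novikov identification $t_{-1}=\lambda_k^2\tau$ absorbed consistently. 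The paper disposes of this in one line by appealing to Proposition~\ref{proposition: SL action general to Halp}, i.e.\ it works at the level of Halphen solutions, where both sides reduce to the substitution $X_j^\infty\mapsto X_j^A$ and the $\lambda_k$'s are tracked through Notation~\ref{notation: X_4}; you instead isolate the passage $F_0^{\X_k}\to F_{an}^{\X_k}$ as the action of the diagonal element $D_k=\mathrm{diag}(\lambda_k,\lambda_k^{-1})$ and then invoke only the group law of Eq.~\eqref{eq:SLAction}. Your route is slightly more structural and self-contained --- it makes transparent why the conjugation $A\mapsto A'=D_kA$ appears and why it is $D_kA$ rather than $AD_k$ --- whereas the paper's route reuses machinery already set up for the elliptic orbifolds and avoids restating the scaling bookkeeping. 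Both arguments rest on the same computation in the end.
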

    \begin{proof}
      This follows immediately from the explicit form of the action and Proposition~\ref{proposition: SL action general to Halp} above.
    \end{proof}

    \subsection{The action of $\A^{(\tau_0,\omega_0)}$}
    In what follows we will be in particular interested in the action of the $\SL2C$ elements of the certain form.
    For any fixed $\tau_0 \in \HH$, $\omega_0 \in \CC^*$ define:
    $$
      \A^{(\tau_0, \omega_0)} := 
      \begin{pmatrix}
	  \dfrac{\sqrt{-1}\bar{\tau}_0}{2\omega_0{\rm Im}(\tau_0)} & \omega_0 \tau_0
	  \\
	  \dfrac{\sqrt{-1}}{2\omega_0{\rm Im}(\tau_0)} & \omega_0
	\end{pmatrix} \in \SL2C.
    $$
    This special choice of a $\SL2C$ element comes from singularity theory assumptions and was first proposed\footnote{note however that in the reference given this element was introduced to have $\det = 1/(2\pi\sqrt{-1})$ for any $\tau_0$ and $\omega_0$. We rescale it here because we want to work with the $\SL2C$ element} in \cite{BT}. It has a special meaning in our treatment and we will comment on it later.

    \begin{notation}\label{notation: Xtauomega}
    For any any fixed $\tau_0 \in \HH$, $\omega_0 \in \CC^*$ by using Eq.~\eqref{eq: SL action on Halp} denote: 
    $$
      X_k^{(\tau_0,\omega_0)}(t) := \left( X_k^\infty(t) \right)^{\A^{(\tau_0, \omega_0)}}, \quad 2 \le k \le 4.
    $$
    \end{notation}
    It's easy to see that the functions $X_k^{(\tau_0,\omega_0)}(t)$ are holomorphic in $\lbrace t \in \CC \ | \ |t|<|2\omega_0{\rm Im}(\tau_0)| \rbrace$.
  
  \section{CY/LG correspondence}\label{section: CYLG}
    The idea of CY/LG correspondence came from global Mirror symmetry conjecture. In its framework both FJRW theory and GW theory appear to be the A--side models. The B--model of the global mirror symmetry is given by a singularity with a symmetry group fixed. However it should be understood globally, as varying in a family, given by the different choices of an additional structure --- {\it primitive form} of the singularity. On the B--side, different choices of the primitive form should give (generally) different CohFTs, understood as \textit{different phases} of the one B--model.
    
    The A--model is said to be mirror to the B--model if the partition function of the A--model CohFT coincides up to an S--action of Givental with the partition function of the B--model with some primitive form choice.
    It can happen that two A--models are mirror to the same B--model (taken in the different phases). Then two mirror B--model partition functions differ by a primitive form change. This led to the conjecture, that there should be a R--action of Givental, connecting two B--model CohFTs of the same singularity with the different primitive form choice, or, up to a mirror symmetry equivalently, there should be a R--action of Givental, connecting two A--models, that are mirror to the same global B--model.

    Another important aspect of the global mirror symmetry is the symmetry group, that should be present on both A and B sides.
    Namely, everything said above should hold in the equivariant setting, when both A--model and B--model are considered with some symmetry groups. This is now ultimately realized on the A--side (by FJRW theory in particular), but missing in full generality on the B--side (see \cite{BTW1,BTW2}).

    In \cite{BT} the action of $\A^{(\tau_0,\omega_0)}$ was considered as a \textit{model} for the primitive form change for simple--elliptic singularities. Even as there is no construction of the orbifolded B--model CohFT, one can use the action $\A^{(\tau_0,\omega_0)}$, standing (conjecturally, being equivalent) for the primitive form change of the orbifolded B--model.
    The results of this paper support this conjectural usage of it.
    
  \subsection{Simple--elliptic singularities with the maximal symmetry group}\label{section: SES Gmax}
    The global mirror symmetry program conjectures that for the B--model with the trivial symmetry group, the symmetry group of the A--model should be maximal --- $G_{max}$. In this case the B--model is given by the so--called Saito--Givental CohFT and several different mirror symmetry results were proven (see \cite{CR,MS,MR,KS,LLSS,SZ2,PS,BP}). 
    
    From this variety of mirror symmetry results, in this paper the most important for us is the following $G_{max}$---CY/LG correspondence theorem. Let the basis of $H^*_{orb}(\PP^1_{4,4,2})$ be as in Section~\ref{section: GW theory} and $\lambda_4$ be as in Notation~\ref{notation: X_4}.

    \begin{theorem}[Theorem~4.1 and Lemma~4.9 in \cite{BP}]\label{theorem: CYLG Gmax}
      Consider the FJRW theory of the pair $(\tilde E_7,G_{max})$ and the GW theory of $\PP^1_{4,4,2}$. We have:
      $$
	F_0^{(\tilde E_7, G_{max})}(\tilde \bt) = \A^{(\tau_0,\omega_0)} \cdot F_{an}^{\PP^1_{4,4,2}}(\bt),
      $$
      for $\tau_0 = \sqrt{-1}$, $\omega_0 = \lambda_4 \sqrt{2\pi}/ \left( \Gamma (3/4) \right)^2$ and the certain linear change of variables $\tilde \bt = \tilde \bt(\bt)$. Moreover for the upper--triangular group element $R^{\sigma'}$:
      $$
	R^{\sigma^\prime} := 
	  \exp( \left(
	  \begin{array}{c c c}
	    0 & \dots & \sigma' \\
	    \vdots & 0 & \vdots \\
	    0 & \dots & 0
	  \end{array}
	\right) z), \quad \text{ where } \quad \sigma' = -\frac{1}{2\pi^2}\left(\Gamma(\frac{3}{4})\right)^4,
      $$
      up to the certain $S$--action holds:
      $$
	F_0^{(\tilde E_7, G_{max})} = \hat R^{\sigma'} \cdot \hat S \cdot F_0^{\PP^1_{4,4,2}}.
      $$
    \end{theorem}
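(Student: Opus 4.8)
The plan is to prove the two displayed identities in sequence: first the $\SL2C$--level equality between the FJRW potential and the analytic $\PP^1_{4,4,2}$--potential, and then to repackage it as a Givental action using the dictionary of Theorem~\ref{theorem: SL2C to Givental} and Proposition~\ref{proposition: sl Fan to F}.

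First I would compute the genus zero primary potential $F_0^{(\tilde E_7,G_{max})}$ in closed form. Since $\tilde E_7$ is simple--elliptic, every correlator with only narrow insertions is controlled by the concavity axiom~\ref{subsection: FJRW concavity}, whose four--point values are the Chiodo expression in terms of $B_2$ displayed in the Concavity subsection; the remaining broad correlators are pinned down by the pairing axiom U1, the selection rule~\ref{subsection: FJR selection rule}, $G_W$--invariance and the WDVV equation. This reduces the whole potential to a finite list of structure constants and exhibits it as a solution of WDVV on a state space of the same dimension and with the same pairing as $H^*_{orb}(\PP^1_{4,4,2})$. The key structural observation, exactly as for $F_{an}^{\PP^1_{4,4,2}}$, is that the three-- and four--point FJRW data organize into a triple of functions solving Halphen's system~\eqref{eq: Halp in tau}; this is what makes the two theories directly comparable.

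Next I would match the two WDVV solutions. By Proposition~\ref{proposition: SL action general to Halp} the action of $\A^{(\tau_0,\omega_0)}$ on $F_{an}^{\PP^1_{4,4,2}}$ is the substitution of the Halphen triple $\{X_2^\infty,X_3^\infty,X_4^\infty\}$ by $\{X_2^{(\tau_0,\omega_0)},X_3^{(\tau_0,\omega_0)},X_4^{(\tau_0,\omega_0)}\}$, which is holomorphic near the origin. Expanding this transformed potential in a Taylor series at $t=0$, I would compare its low--order coefficients with the FJRW structure constants of the previous step. The values $\tau_0=\sqrt{-1}$ and $\omega_0=\lambda_4\sqrt{2\pi}/(\Gamma(3/4))^2$ are \emph{forced} by this comparison: $\tau_0=\sqrt{-1}$ is the lemniscatic CM point at which the theta constants $\vartheta_k$ degenerate symmetrically, and $\omega_0$ is precisely the normalization for which the leading values of $X_k^{(\tau_0,\omega_0)}$ reproduce the FJRW three--point functions; the special values of $\vartheta_k(\sqrt{-1})$ are what introduce the transcendental factor $\Gamma(3/4)$. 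The linear change of variables $\tilde\bt=\tilde\bt(\bt)$ is then read off from the identification of the FJRW narrow and broad sectors with the basis $\Delta_{i,j}$, normalized so that the two pairings agree.

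Finally, for the second identity I would invoke Theorem~\ref{theorem: SL2C to Givental}, which rewrites any $\SL2C$--action $F_0\mapsto F_0^A$ as $\hat R^{\sigma'}\cdot(\hat S_0^A)^{-1}$ with $\sigma'=-c/(c\tau+d)$, together with Proposition~\ref{proposition: sl Fan to F} to pass from the action on $F_{an}^{\PP^1_{4,4,2}}$ to the action on the honest potential $F_0^{\PP^1_{4,4,2}}$ (this replaces $A$ by its $\lambda_4$--rescaling $A'$). Substituting $A=\A^{(\sqrt{-1},\omega_0)}$ and simplifying via $\omega_0^2=\pi^2\sqrt{-1}/(\Gamma(3/4))^4$ produces the stated constant $\sigma'=-\frac{1}{2\pi^2}(\Gamma(3/4))^4$, while the residual lower--triangular data is absorbed into a single $\hat S$, giving the formula $F_0^{(\tilde E_7,G_{max})}=\hat R^{\sigma'}\cdot\hat S\cdot F_0^{\PP^1_{4,4,2}}$. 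The main obstacle will be the normalization bookkeeping: matching the analytic normalization of the Halphen triple (hence the values of $\vartheta_k$ at $\tau=\sqrt{-1}$, where $\Gamma(3/4)$ enters) against the FJRW three--point correlators, and then propagating this correctly through the $\lambda_4$--rescaling of Proposition~\ref{proposition: sl Fan to F} to land on the exact $\sigma'$. The WDVV verification and the change of variables are routine once the triples coincide; it is the transcendental constant and the chain of rescalings that demand care.
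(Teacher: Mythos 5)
This statement is not proved in the paper at all: it is imported verbatim from the reference \cite{BP} (Theorem~4.1 and Lemma~4.9 there), so there is no in--paper proof to compare your sketch against line by line. That said, your outline is a faithful reconstruction of the method that \cite{BP} uses and that this paper itself replicates for the non--maximal groups in Section~\ref{section: proofs}: reconstruct the FJRW potential from the axioms plus WDVV, identify it with $\A^{(\tau_0,\omega_0)}\cdot F_{an}^{\PP^1_{4,4,2}}$ via the Halphen triple, and then convert the $\SL2C$--action into the $R$-- and $S$--actions through Theorem~\ref{theorem: SL2C to Givental} and Proposition~\ref{proposition: sl Fan to F}. Your constant bookkeeping is correct: with $\tau_0=\sqrt{-1}$ one has $c=\sqrt{-1}/(2\omega_0)$, $d=\omega_0$, the $\lambda_4$--rescaling of Proposition~\ref{proposition: sl Fan to F} leaves $-c/d$ unchanged, and $-c/d=-\sqrt{-1}/(2\omega_0^2)=-(\Gamma(3/4))^4/(2\pi^2)$ as claimed. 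Two caveats. First, a small inaccuracy: for $(\tilde E_7,G_{max})$ the entire state space is narrow (the basis is $\{[\rho_1^i\rho_2^j\rho_3,1]\}$), so there are no ``remaining broad correlators'' to pin down --- the whole genus--zero data comes from concavity (via Chiodo's formula) plus WDVV reconstruction; this actually makes your step~1 easier than you describe. Second, the genuinely hard content --- producing the closed form of $F_0^{(\tilde E_7,G_{max})}$ and verifying that the Taylor coefficients of $X_k^{(\tau_0,\omega_0)}$ at $t=0$ match the FJRW three-- and four--point numbers, which is what forces $\tau_0=\sqrt{-1}$ and $\omega_0=\lambda_4\sqrt{2\pi}/(\Gamma(3/4))^2$ --- is asserted rather than carried out, which is acceptable for a blueprint but is precisely where all the work of \cite{BP} resides.
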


    The change of the variables $\tilde \bt(\bt)$ is the following one. We need first to fix the basis in FJRW theory of $(\tilde E_7,G_{max})$. For $W = x^4 + y^4 + z^2$ we have $G_{max} = \langle \rho_1,\rho_2,\rho_3\rangle$, where $\rho_1(x,y,z) = (-\sqrt{-1}x,y,z)$, $\rho_2(x,y,z) = (x,-\sqrt{-1}y,z)$ and $\rho_3(x,y,z) = (x,y,-z)$. The basis of $\H_{\tilde E_7, G_{max}}$ can then be written as $\{[\rho_1^{i}\rho_2^{j}\rho_3,1]\}$ for $1 \le i,j \le 3$.
    
    The change of the variables reads:
    \begin{align*}
      & t_{1,1} = \sqrt{-1}\sqrt{2}\left(\widetilde t_{\rho_1\rho_2^2\rho_3}-\widetilde t_{\rho_1^2\rho_2\rho_3}\right), 
      \ t_{1,2} = -\widetilde t_{\rho_1\rho_2^3\rho_3} + \sqrt{2} \ \widetilde t_{\rho_1^2\rho_2^2\rho_3}-\widetilde t_{\rho_1^3\rho_2\rho_3}, 
      \\ 
      & t_{1,3} = \sqrt{-1}\sqrt{2}\left(\widetilde t_{\rho_1^2\rho_2^3\rho_3}-\widetilde t_{\rho_1^3\rho_2^2\rho_3}\right), 
      \ 
      t_{2,1} = \sqrt{2} \left(\widetilde t_{\rho_1\rho_2^2\rho_3} + \widetilde t_{\rho_1^2\rho_2\rho_3}\right), 
      \\ 
      & t_{2,2} = \widetilde t_{\rho_1\rho_2^3\rho_3} + \sqrt{2} \ \widetilde t_{\rho_1^2\rho_2^2\rho_3}+\widetilde t_{\rho_1^3\rho_2\rho_3}, \ t_{2,3} = \sqrt{2} \left(\widetilde t_{\rho_1^2\rho_2^3\rho_3}+\widetilde t_{\rho_1^3\rho_2^2\rho_3}\right), 
      \\
      & t_{3,1} = \sqrt{-1}\left(\widetilde t_{\rho_1\rho_2^3\rho_3}-\widetilde t_{\rho_1^3\rho_2\rho_3}\right), \quad t_0 = \widetilde t_{\rho_1\rho_2\rho_3}, \ t_{-1}= \widetilde t_{\rho_1^3\rho_2^3\rho_3}.
    \end{align*}
    It's not hard to see that this change of the variables is also degree preserving. The S--action of Theorem~\ref{theorem: CYLG Gmax} is given by $\hat S := \hat S^{\tau_0} \cdot \hat S_0$ for $\hat S_0$ being the rescaling of the variables and
    \[
	S^{\tau_0}(z) = 
	\exp \left( \Bigg(
	\begin{array}{c c c}
	  0 & \dots & 0 \\
	  \vdots & 0 & \vdots \\
	  \tau_0 & \dots & 0
	\end{array}\Bigg)
	z^{-1}\right),
    \]
    so that the action of $\hat S^{\tau_0}$ is equivalent to the expansion at the point $t_{-1} = \tau_0$.
    \begin{remark}
      It's important to note, that in the proof \cite[Section~4]{BP} of the theorem above one doesn't use the virtual fundamental cycle of Fan--Jarvis--Ruan, but again only some properties of the FJRW CohFT. It's easy to check that these are only the axioms\ref{subsection: FJRW state space} --- \ref{subsection: FJRW concavity}, we use in this paper, that are used in \cite{BP}.
    \end{remark}
    
    Explicit R--matrix of the theorem above will play a decisive role in the computations we need to perform to prove main theorem of this paper.

    Recall that we can write the function $\A^{(\tau_0,\omega_0)} \cdot F_{an}^{\PP^1_{4,4,2}}$ (and hence $F_0^{(\tilde E_7,G_{max})}$) via the (holomorphic) functions $X^{(\tau_0,\omega_0)}_k$ with $k=2,3,4$.
    For $\tau_0$ and $\omega_0$ as in theorem above the following series expansions hold:
    \begin{align*}
      X_2^{(\tau_0,\omega_0)}(t) &= \frac{1}{4}-\frac{t}{16}+\frac{t^2}{64}-\frac{t^3}{768}+\frac{t^4}{3072}-\frac{t^5}{20480}+\frac{t^6}{245760}-\frac{13 t^7}{20643840}+\frac{t^8}{9175040}+\mathrm{O}\left(t^9\right)
      \\
      X_3^{(\tau_0,\omega_0)}(t) &= \frac{t}{16}-\frac{t^3}{768}+\frac{t^5}{20480}-\frac{13 t^7}{20643840} + \mathrm{O}\left(t^9\right),
      \\
      X_4^{(\tau_0,\omega_0)}(t) &= -\frac{1}{4}-\frac{t}{16}-\frac{t^2}{64}-\frac{t^3}{768}-\frac{t^4}{3072}-\frac{t^5}{20480}-\frac{t^6}{245760}-\frac{13 t^7}{20643840}-\frac{t^8}{9175040}+ \mathrm{O}\left(t^9\right).
    \end{align*}
    We remind also, that these functions have the particular closed formula by Notation~\ref{notation: Xtauomega}, Eq.\eqref{eq: SL action on Halp} and satisfy $X_k^{(\tau_0,\omega_0)} \in \QQ[[t]]$ for all $k=2,3,4$.
        
  \subsection{Simple--elliptic singularities with a non--maximal symmetry group}
    Consider the simple--elliptic singularity $\tilde E_7$ written by $W = x^4 + y^4 + z^2$ and the symmetry groups (recall the notation of Section~\ref{section: FJRW}):
      \begin{align*}
      G_1 &:= \langle a_1,b_1,c_1 \rangle: \ &&a_1(x,y,z) := \left(\sqrt{-1}x, \sqrt{-1}y, z \right), \ b_1(x,y,z) := (x,-y,z),
      \\
      & && c_1(x,y,z) := (x,y,-z),
      \\
      G_2 &:= \langle a_2,b_2\rangle: \ &&a_2(x,y,z)  := \left(\sqrt{-1}x, \sqrt{-1}y, -z \right), \ b_2(x,y,z):= (x,-y,z),
      \\
      G_3 &:= \langle a_3,b_3\rangle: \ &&a_3(x,y,z) : = \left(\sqrt{-1}x, \sqrt{-1}y, z \right), \ b_3(x.y,z) := (x,y,-z),
    \end{align*}
    \begin{theorem}\label{theorem:main}
    Up to the certain different Givental's S--actions $S^{(k)}$ the partition functions of all three FJRW theories $(\tilde E_7,G_1)$, $(\tilde E_7,G_2)$ and $(\tilde E_7,G_3)$ are connected to the partition function of the Gromov--Witten theory of $\PP^1_{2,2,2,2}$ by the same Givental's R--action of:
    $$
      R^{\sigma^\prime} := 
	\exp( \left(
	\begin{array}{c c c}
	  0 & \dots & \sigma' \\
	  \vdots & 0 & \vdots \\
	  0 & \dots & 0
	\end{array}
      \right) z), \quad \text{ for } \quad \sigma' = -\frac{1}{2\pi^2}\left(\Gamma(\frac{3}{4})\right)^4,
    $$
    so that holds:
    $$
      {\mathcal Z}^{(\tilde E_7, G_k)} =  \hat R^{\sigma'}  \cdot \hat S^{(k)} \cdot \Z^{\PP^1_{2,2,2,2}}, \quad k=1,2,3.
    $$
    \end{theorem}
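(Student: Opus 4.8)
The plan is to prove the partition--function identity in two stages: first at genus zero, where it becomes an identity between two explicit WDVV solutions related by the $\SL2C$--action, and then to lift it to all genera. The conceptual reason the \emph{same} operator $R^{\sigma'}$ appears for all three groups---and agrees with the $G_{max}$ case of Theorem~\ref{theorem: CYLG Gmax}---is that $R^{\sigma'}$ encodes the primitive--form change of the fixed singularity $\tilde E_7$ and is blind to the choice of symmetry group. Concretely, by Theorem~\ref{theorem: SL2C to Givental} the exponent is $\sigma' = -c/(c\tau_0+d)$ with $c,d$ the lower row of $\A^{(\tau_0,\omega_0)}$ and $\tau_0 = \sqrt{-1}$ the same elliptic--curve point as before; and by Proposition~\ref{proposition: sl Fan to F} the passage from $F_{an}^{\X_k}$ to the honest potential $F_0^{\X_k}$ rescales that lower row by $\lambda_k^{-1}$, a factor that cancels in the quotient defining $\sigma'$. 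Hence $\PP^1_{2,2,2,2}$ (with $\lambda_2$) yields exactly the value $-\frac{1}{2\pi^2}(\Gamma(3/4))^4$ already obtained for $\PP^1_{4,4,2}$ (with $\lambda_4$).

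First I would compute the three genus--zero FJRW potentials $F_0^{(\tilde E_7,G_k)}$ from the axioms alone. For each $G_k$ I determine the state space $\H_{\tilde E_7,G_k}$, separating the narrow sectors ($N_h=0$) from the broad ones ($\Fix(h)\neq\{0\}$, e.g. for the reflections $b_k$). The narrow correlators are supplied directly by the pairing axiom and the concavity axiom, and by Corollary~\ref{corollary: concave sectors} they coincide termwise with the corresponding $(\tilde E_7,G_{max})$ correlators. The broad correlators, to which concavity does not apply, are then pinned down by solving the WDVV equation~\eqref{eq: wdvv} subject to the degree~(\ref{subsection: FJR degree axiom}), selection~(\ref{subsection: FJR selection rule}) and $G_W$--invariance~(\ref{subsection: FJR Gw invariance}) constraints. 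This produces the closed formulae of Propositions~\ref{proposition: primary potential of W,G_1}--\ref{proposition: primary potential E,G_3}, and one must be careful to record that for two of the groups the axioms leave a residual one--parameter scaling freedom, while for the third the natural normalization forces $\Gamma(3/4)$--type irrational coefficients.

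Next I would realize each $F_0^{(\tilde E_7,G_k)}$ as an $\SL2C$--transform of the $\PP^1_{2,2,2,2}$ potential. Since $F_{an}^{\PP^1_{2,2,2,2}}$ is written entirely through the Halphen solutions $X_2^\infty,X_3^\infty,X_4^\infty$ (Notation~\ref{notation: analytic potential X2}), Proposition~\ref{proposition: SL action general to Halp} lets me implement the action of $A=\A^{(\tau_0,\omega_0)}$ as the concrete substitution $X_k^\infty \mapsto X_k^{(\tau_0,\omega_0)}$ of Eq.~\eqref{eq: SL action on Halp} and Notation~\ref{notation: Xtauomega}. Fixing $\tau_0=\sqrt{-1}$ and the appropriate $\omega_0$, and expanding the holomorphic series $X_k^{(\tau_0,\omega_0)}$ as in Section~\ref{section: SES Gmax}, I compare the resulting WDVV solution with the FJRW potential computed above, seeking a linear degree--preserving change of variables $\tilde\bt=\tilde\bt(\bt)$ that makes them coincide. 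Matching the two presentations simultaneously fixes the change of variables, determines $\omega_0$, and resolves the scaling ambiguity left over from the first step.

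Finally I would convert the genus--zero $\SL2C$--identity into the stated Givental identity and propagate it upward. Theorem~\ref{theorem: SL2C to Givental} turns the action of $\A^{(\tau_0,\omega_0)}$ into $\hat R^{\sigma'}\cdot\hat S^{(k)}$, where $\hat S^{(k)}$ absorbs the expansion at $\tau_0$ together with the variable rescaling and basis choice that distinguish the three groups, and where $\sigma'$ evaluates to the common value $-\frac{1}{2\pi^2}(\Gamma(3/4))^4$ by the cancellation described above. The higher--genus extension of Theorem~\ref{theorem: SL2C to Givental} (noted in the text), together with generic semisimplicity of the $\PP^1_{2,2,2,2}$ Frobenius manifold and the uniqueness in the Givental--Teleman reconstruction, then lifts the genus--zero matching to the full partition functions, giving $\Z^{(\tilde E_7,G_k)}=\hat R^{\sigma'}\cdot\hat S^{(k)}\cdot\Z^{\PP^1_{2,2,2,2}}$. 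I expect the main obstacle to be the second step: producing the explicit FJRW potentials and then matching them, since concavity yields only a sparse list of correlators and the scaling freedom---whose very existence is signalled by the irrational normalization of the third theory---must be tracked with care throughout.
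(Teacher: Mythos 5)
Your proposal follows essentially the same route as the paper: compute the three genus--zero potentials from concavity plus WDVV, identify each with an $\SL2C$--transform of $F_0^{\PP^1_{2,2,2,2}}$ via Theorem~\ref{theorem: SL2C to Givental} and Proposition~\ref{proposition: sl Fan to F}, and lift to all genera by semisimplicity and Teleman's reconstruction. The only small inaccuracy is that for $G_3$ the relevant element is $\A^{(\tau_3,\omega_3)}$ with $\tau_3 = 1+\sqrt{-1}/2$, $\omega_3=\sqrt{2}\,\omega_0$, not $\A^{(\tau_0,\omega_0)}$ itself, though the ratio $-c/d$ and hence $\sigma'$ is indeed unchanged, exactly as you argue.
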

      \begin{proof}
	We show in Propositions~\ref{proposition: G_1 explicit}, \ref{proposition: G_2 explicit} and \ref{proposition: G_3 explicit} of the next section that there are $A_k \in \SL2C$ for $k=1,2,3$, s.t. $F_0^{(\tilde E_7,G_k)} = A_k \cdot F_0^{\PP^1_{4,4,2}}$, acting as in Eq.\eqref{eq:SLAction}. By using topological recursion relation in genus zero together with Theorem~\ref{theorem: SL2C to Givental} we get an $R$--action of Givental, s.t. $\F_0^{(\tilde E_7,G_k)} = \textrm{res}_{\hbar} ( \hat R \cdot S^{(k)} \cdot \Z_0^{\PP^1_{4,4,2}} )$. It turns out that even though the matrices $A_k$ are not the same in all three cases, the $R$--action appears to be the same (however the $S$--actions needed are anyway different).

	The conditions of Theorem~\ref{theorem: SL2C to Givental} require also certain analyticity of the potentials. We know that this holds because of the particular form of $F_0^{\PP^1_{4,4,2}}$ and $X_k^{(\tau_0,\omega_0)}$. Namely, we utilize the fact that Jacobi theta constants and their logarithmic derivatives are holomorphic in $\HH$.
	
	The FJRW theories of $(\tilde E_7,G_k)$ are all semisimple. One can show it for all three functions $F_0^{(\tilde E_7,G_k)}$ by using the explicit expressions of the potentials. In particular the point $\bt = 0$ is not semisimple, however the point in the neighborhood is semisimple, and this is enough because the property of being semisimple is open. It's a computational exercise to see that the point $\bt = (0,1,2,3,-1,0)$ is semisimple for $\A^{(\tau_0,\omega_0)} \cdot F_{an}^{\PP^1_{2,2,2,2}}$.
	We can apply the reconstruction theorem of Teleman \cite{T}, that gives us that our genus zero equality extends to the higher genera too, what completes the proof.
      \end{proof}
      Note that applying Theorem~\ref{theorem: SL2C to Givental} we made a choice, in which order to apply the S and R--actions. In the equality of two partition functions this is equivalent to the choice, on which side to apply the S--action --- on the FJRW, or on the GW side. The S--action used makes a shift of the coordinates. Hence, in order to have the correlators and make the equality of the partition functions reasonable we should have some analyticity statement about the partition function, to which the S--action is applied. We know such a property only on the GW side, what supports the choice made.

    \begin{remark}
    For the particular values of $\tau_0$ and $\omega_0$ as in Theorem~\ref{theorem: CYLG Gmax} and Theorem~\ref{theorem:main}, we have $X_k^{(\tau_0,\omega_0)} \in \QQ[[t]]$ for all $k=2,3,4$. This is indeed a rare situation (see \cite{Bth}), making the potential 
    \end{remark}
  \section{Computations in FJRW theory}\label{section: proofs}
  
  We first reconstruct explicitly the genus primary potentials of the three FJRW theories in question.
  The reconstruction procedure is always the following. We compute the state space of the FJRW theory and write down the genus $0$ potential via the unknown functions, that are restricted by the selection rule, degree axiom and $G_{max}$--invariance axioms. On the next step we identify those unknown functions that are in the concave sector and hence can be taken from the $G_{max}$--FJRW theory by Corollary~\ref{corollary: concave sectors}. The remaining unknown functions are further reconstructed by the WDVV equation.
  
  Note that usually setting up some mirror symmetry isomorphism one doesn't compute genus zero potentials completely. This is because there is usually a small number of correlators, that reconstruct genus zero potential unambiguously by WDVV equation. The steps outlined above force us to work indeed with the genus zero potentials, and not just some coefficients of their series expansions. 
    
  The most amazing example of the reconstruction procedure we perform is the last one, where the concave sector gives only one function we know explicitly out of the total $10$ building up the potential.

  \subsection{Notations}
  In this section we assume $\tau_0$ and $\omega_0$ to be fixed as in Theorem~\ref{theorem: CYLG Gmax}.
  Recall also Notation~\ref{notation: xAyAzAwA} for $x^A$, $y^A$, $z^A$ and $w^A$. We keep:
    \begin{align*}
      \rx_0 &:= x^{\A^{(\tau_0,\omega_0)}}(t), \ \ry_0 := y^{\A^{(\tau_0,\omega_0)}}(t),  \ \rz_0 := z^{\A^{(\tau_0,\omega_0)}}(t),  \ \rw_0 := w^{\A^{(\tau_0,\omega_0)}}(t).
    \end{align*}
  We make use of the several technical lemmas, that are given in Appendix~\ref{appendix A}.
  
  In this section we write the polynomial $W$ in the $\CC$--coordinates $x,y,z$, rather then $x_1,x_2,x_3$, to reduce the number of subscripts appearing.
  
  We also employ the following notation. All $g \in G_{max}$ are represented by the triples $(\alpha_1,\alpha_2,\alpha_3)$, s.t. 
  \[
    g (x,y,z) = (e[\alpha_1] \cdot x, \ e[\alpha_2]\cdot y, \ e[\alpha_3]\cdot z), \quad \alpha_k \in \QQ \cap [0,1).
  \]
  Such set of the rational number is unique for any $g$.
  
  Recall that the term \textit{WDVV equation} denotes the \textit{system} of PDEs~\eqref{eq: wdvv} for all indices $i,j,k,l$. Due to the complicated variable numbering we will say that the particular PDE~\eqref{eq: wdvv} with some $\{i,j,k,l\}$ is fixed by the quadruple $\{t_i,t_j,t_k,t_l\}$.

\subsection{Case 1: $1$--dimensional broad sector}
Consider $W = x^4 + y^4 + z^2$ and the symmetry group $G_1 := \langle a, b, c \rangle$, where $a = (1/4,1/4,0)$, $b = (0,1/2,0)$ and $c = (0,0,1/2)$. We have $ac = J \in G_1$ and $a^2J = J^{-1}$. The state space $\H$ has the following basis:
$$
  \H = \left\lbrace [J,1],[aJ,1],[bJ,1],[a^2bJ,1],[c,xy],[a^2J,1] \right\rbrace.
$$
By using the selection rule and degree axiom the genus $0$ potential of the FJRW -- theory $(\tilde E_7,G_1)$ reads:

\begin{align*}
  F_0^{(\tilde E_7, G_1)}& = \frac{1}{2} t_J^2 t_{a^2 J}+t_J \left(\frac{t_{aJ}^2}{2}+t_{bJ} t_{a^2 bJ}+\frac{t_{c,xy}^2}{32}\right)+t_{c,xy}^4 g_1( t_{a^2 J} )+t_{bJ} t_{a^2 bJ} t_{c,xy}^2 g_2( t_{a^2 J} )
  \\
   +t_{aJ} & t_{a^2 bJ}^2 t_{c,xy} g_3( t_{a^2 J} )+t_{aJ} t_{bJ}^2 t_{c,xy} g_4( t_{a^2 J} )+t_{aJ}^2 t_{c,xy}^2 g_5( t_{a^2 J} )+t_{a^2 bJ}^2 t_{c,xy}^2 h_1( t_{a^2 J} )
  \\
  +t_{bJ}^2 & t_{c,xy}^2 h_2( t_{a^2 J} )+t_{aJ} t_{c,xy}^3 h_3( t_{a^2 J} )+t_{aJ} t_{bJ} t_{a^2 bJ} t_{c,xy} h_4( t_{a^2 J} )
  \\
  +t_{aJ}^3 & t_{c,xy} h_5( t_{a^2 J} )+t_{bJ} t_{a^2 bJ}^3 f_{0,1}( t_{a^2 J} )+t_{bJ}^3 t_{a^2 bJ} f_{0,2}( t_{a^2 J} )+t_{aJ}^2 t_{a^2 bJ}^2 f_{0,3}( t_{a^2 J} )+t_{aJ}^2 t_{bJ}^2 f_{0,4}( t_{a^2 J} )
  \\
  + &t_{a^2 bJ}^4 f_{1,1}( t_{a^2 J} )+t_{bJ}^2 t_{a^2 bJ}^2 f_{1,2}( t_{a^2 J} )+t_{bJ}^4 f_{1,3}( t_{a^2 J} )+t_{aJ}^2 t_{bJ} t_{a^2 bJ} f_{1,4}( t_{a^2 J} )+t_{aJ}^4 f_{1,5}( t_{a^2 J} ).
\end{align*}
for some unknown functions $g_k(t)$, $h_k(t)$ and $f_{1,k}(t)$. However from the selection rule~\ref{subsection: FJR selection rule} we know that all functions $g_k(t)$ are odd while the functions $h_k(t)$ are even.
The correlators of the $(\tilde E_7, G_1)$ theory involving narrow insertions only are concave. Hence we can identify some of the functions above with those from $(\tilde E_7, G_{max})$ -- theory. We have:
\begin{align*}
  & f_{0,1}( t_{a^2 J} ) = 0, \ f_{0,2}( t_{a^2 J} ) = 0, \ & f_{0,3}( t_{a^2 J} ) & = -\frac{\rx_0^2}{8}-\frac{\rx_0 \ry_0}{4}-\frac{\ry_0^2}{8},
  \\
  & f_{0,4}( t_{a^2 J} ) = -\frac{\rx_0^2}{8}-\frac{\rx_0 \ry_0}{4}-\frac{\ry_0^2}{8}, \ & f_{1,1}( t_{a^2 J} ) & = -\frac{\rx_0^2}{48}+\frac{\rx_0 \ry_0}{8}-\frac{\ry_0^2}{48},
  \\
  & f_{1,2}( t_{a^2 J} ) = -\frac{\rw_0}{2}+\frac{3 \rx_0^2}{8}-\frac{\rx_0 \ry_0}{4}-\frac{\ry_0^2}{8}, \ & f_{1,3}( t_{a^2 J} ) & = -\frac{\rx_0^2}{48}+\frac{\rx_0 \ry_0}{8}-\frac{\ry_0^2}{48},
  \\
  & f_{1,4}( t_{a^2 J} ) = -\frac{\rw_0}{2}+\frac{\rx_0^2}{4}+\frac{\rx_0 \ry_0}{2}-\frac{\ry_0^2}{4}, \ & f_{1,5}( t_{a^2 J} ) & = -\frac{\rw_0}{8}+\frac{\rx_0^2}{12}-\frac{\ry_0^2}{24},
\end{align*}
where the functions $\rx_0 = \rx_0(t_{a^2 J})$, $\ry_0 = \ry_0(t_{a^2 J})$, $\rz_0=\rz_0(t_{a^2 J})$, $\rw_0=\rw_0(t_{a^2 J})$ are 
given at the beginning of this section.

\subsubsection{The WDVV equation}\label{subsection: G_1 potential}
  Writing the WDVV equation for $F_0^{(\tilde E_7,G_1)}$ we get the following system:
  \begin{align*}
    \rw_0'(t) = \rw_0^2 - \rx_0^4, \ \rx_0'(t) =  \rx_0 \left(\rw_0 - \rx_0^2 + 2 \ry_0^2\right), \ \ry_0'(t) = \ry_0 \left( \rw_0 + \rx_0^2 \right),
  \end{align*}
  and also
  \begin{align*}
    & g_3( t_{a^2 J} ) =  0, \ g_4( t_{a^2 J} ) =  0, \ h_3( t_{a^2 J} ) =  0, \ h_4( t_{a^2 J} ) =  0, \ h_5( t_{a^2 J} ) =  0,
    \\
    & g_1( t_{a^2 J} ) =  \frac{\rx_0^2}{3072} - \frac{\rw_0}{2048} -\frac{\ry_0^2}{6144}, \ g_2( t_{a^2 J} ) =  \frac{\rx_0^2}{64} +\frac{\rx_0 \ry_0}{32} -\frac{\ry_0^2}{64} -\frac{\rw_0}{32}, 
    \\ 
    & g_5( t_{a^2 J} ) =  - \frac{\rx_0 \ry_0}{32} +\frac{\rx_0^2}{64} -\frac{\rw_0}{64},
    \\
    &h_1( t_{a^2 J} ) =  \frac{\rx_0^2}{128} +\frac{\rx_0 \ry_0}{64} +\frac{\ry_0^2}{128}, \ h_2( t_{a^2 J} ) =  \frac{\rx_0^2}{128} + \frac{\rx_0 \ry_0}{64} +\frac{\ry_0^2}{128}.
  \end{align*}
  In particular it's obtained by taking Eq.~\eqref{eq: wdvv} fixed by the indices: 
  \begin{align*}
     & \Big\{t_{aJ},t_{aJ},t_{bJ},t_{bJ}\Big\} ,  \Big\{t_{aJ},t_{aJ},t_{a^2 bJ},t_{a^2 bJ}\Big\} ,  \Big\{t_{bJ},t_{bJ},t_{c,xy},t_{c,xy}\Big\} ,  \Big\{t_{aJ},t_{aJ},t_{bJ},t_{bJ}\Big\} ,  
     \\
     & \Big\{t_{aJ},t_{aJ},t_{a^2 bJ},t_{a^2 bJ}\Big\} ,  \Big\{t_{aJ},t_{bJ},t_{c,xy},t_{bJ}\Big\} ,  \Big\{t_{c,xy},t_{c,xy},t_{a^2 bJ},t_{a^2 bJ}\Big\},  \Big\{t_{c,xy},t_{c,xy},t_{aJ},t_{aJ}\Big\}.
  \end{align*}
  
  The differential part of the system above involves only the functions we know already and the PDEs written are equivalent to the WDVV of the genus $0$ GW potential of $\PP^1_{4,4,2}$ (see Section~\ref{section: GW theory}). Hence we do not have to solve the PDEs and we know all functions building up $F_0^{(\tilde E_7,G_1)}$ explicitly.
  The potential of this FJRW theory reads:
  \begin{align*}
    & F_0^{(\tilde E_7, G_1)} = \frac{1}{2} t_J^2 t_{a^2 J}+t_J \left(\frac{t_{aJ}^2}{2}+t_{bJ} t_{a^2 bJ}+\frac{t_{c,xy}^2}{32}\right)+ t_{aJ}^4\left(\frac{\rx_0^2}{12}-\frac{\ry_0^2}{24}-\frac{\rw_0}{8}\right) - t_{bJ}^4\left(\frac{\rx_0^2}{48}-\frac{\rx_0 \ry_0}{8}+\frac{\ry_0^2}{48}\right)
    \\
    & - t_{a^2 bJ}^4 \left(\frac{\rx_0^2}{48} -\frac{\rx_0 \ry_0}{8} +\frac{\ry_0^2}{48}\right) + t_{bJ} t_{a^2 bJ} t_{c,xy}^2\left(\frac{\rx_0^2}{64}+\frac{\rx_0 \ry_0}{32}-\frac{\ry_0^2}{64} -\frac{\rw_0}{32}\right) 
    \\
    & +t_{a^2 bJ}^2 t_{c,xy}^2\left(\frac{\rx_0^2}{128}+\frac{\rx_0 \ry_0}{64}+\frac{\ry_0^2}{128}\right) + t_{c,xy}^4\left(\frac{\rx_0^2}{3072}-\frac{\ry_0^2}{6144} -\frac{\rw_0}{2048} \right) 
    \\
    &+t_{aJ}^2 \Bigg[-t_{bJ}^2\left(\frac{\rx_0^2}{8}+\frac{\rx_0 \ry_0}{4}+\frac{\ry_0^2}{8}\right) + t_{bJ} t_{a^2 bJ}\left(\frac{\rx_0^2}{4}+\frac{\rx_0 \ry_0}{2}-\frac{\ry_0^2}{4}-\frac{\rw_0}{2}\right) -t_{a^2 bJ}^2\left(\frac{\rx_0^2}{8}+\frac{\rx_0 \ry_0}{4}+\frac{\ry_0^2}{8}\right)  
    \\
    & +t_{c,xy}^2\left(\frac{\rx_0^2}{64}-\frac{\rx_0 \ry_0}{32}-\frac{\rw_0}{64}\right)  \Bigg]
    +t_{bJ}^2 \left[ t_{a^2 bJ}^2\left(\frac{3 \rx_0^2}{8}-\frac{\rx_0 \ry_0}{4}-\frac{\ry_0^2}{8}-\frac{\rw_0}{2}\right) +t_{c,xy}^2\left(\frac{\rx_0^2}{128}+\frac{\rx_0 \ry_0}{64}+\frac{\ry_0^2}{128}\right) \right].
  \end{align*}
  By using Eq.\eqref{eq: xyz via HalpT} and the definition of the $\A^{(\tau_0,\omega_0)}$--action we get the following proposition.
  \begin{proposition}\label{proposition: primary potential of W,G_1}
    The genus zero primary potential of the FJRW theory of $(\tilde E_7,G_1)$ reads:
  \begin{equation}\label{eq: G_1 FM potential}
    \begin{aligned}
      F_0^{(\tilde E_7, G_1)} &= \frac{1}{2} t_J^2 t_{a^2 J}+t_J \left(\frac{t_{aJ}^2}{2}+t_{bJ} t_{a^2 bJ}+\frac{t_{c,xy}^2}{32}\right)
      -\Big(\frac{t_{aJ}^4}{24}+\frac{t_{bJ}^4}{48}+\frac{t_{a^2 bJ}^4}{48}
      \\
      & +\frac{t_{c,xy}^4}{6144}+\frac{1}{4} t_{aJ}^2 t_{bJ} t_{a^2 bJ} +\frac{1}{8} t_{bJ}^2 t_{a^2 bJ}^2+\frac{1}{64} t_{bJ} t_{a^2 bJ} t_{c,xy}^2\Big) \left(X_2^{(\tau_0,\omega_0)}  + X_4^{(\tau_0,\omega_0)}  \right)
      \\
      & +\left(\frac{1}{128} t_{bJ}^2 t_{c,xy}^2+\frac{1}{128} t_{a^2 bJ}^2 t_{c,xy}^2-\frac{1}{8} t_{aJ}^2 t_{bJ}^2-\frac{1}{8} t_{aJ}^2 t_{a^2 bJ}^2\right) \left(X_2^{(\tau_0,\omega_0)}  -X_4^{(\tau_0,\omega_0)}  \right)
      \\
      &-\left(\frac{t_{aJ}^4}{24}-\frac{t_{bJ}^4}{24}-\frac{t_{a^2 bJ}^4}{24}+\frac{t_{c,xy}^4}{6144}+\frac{1}{4} t_{bJ}^2 t_{a^2 bJ}^2+\frac{1}{64} t_{aJ}^2 t_{c,xy}^2\right) X_3^{(\tau_0,\omega_0)} ,
    \end{aligned}
  \end{equation}
  where $X_k^{(\tau_0,\omega_0)} = X_k^{(\tau_0,\omega_0)}(t_{a^2 J})$ are as in Section~\ref{section: SES Gmax}.
  \end{proposition}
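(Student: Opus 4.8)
The plan is to take the explicit expression for $F_0^{(\tilde E_7,G_1)}$ assembled just above --- the one written through $\rx_0,\ry_0,\rw_0$ --- and to repackage it in the holomorphic functions $X_k^{(\tau_0,\omega_0)}$. I treat as given the three steps that produced that expression: the degree axiom~\ref{subsection: FJR degree axiom} and the selection rule~\ref{subsection: FJR selection rule} fix the admissible monomials and the parities of the unknown coefficient functions; Corollary~\ref{corollary: concave sectors} identifies every purely narrow coefficient $f_{0,k},f_{1,k}$ with its counterpart in the $(\tilde E_7,G_{max})$ theory; and the WDVV equation~\eqref{eq: wdvv} on the listed quadruples both fixes the broad functions $g_k,h_k$ and, in its differential part, reproduces exactly the system~\eqref{eq: wdvv x_4 in q} satisfied by $\rx_0,\ry_0,\rw_0$. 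This last point is what makes the reconstruction closed: no differential equation has to be integrated, since its solutions are forced to be the $\A^{(\tau_0,\omega_0)}$--transforms of the known $\PP^1_{4,4,2}$ functions (Notation~\ref{notation: xAyAzAwA}, Proposition~\ref{proposition: SL action general to Halp}).

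For the repackaging I would first record the algebraic identities that hold by the very definition of the $\A^{(\tau_0,\omega_0)}$--transformed functions (Notation~\ref{notation: xAyAzAwA} together with~\eqref{eq: xyz via HalpT}). Writing $X_k:=X_k^{(\tau_0,\omega_0)}$, one has $\rx_0+\ry_0=\sqrt{X_2-X_4}$ and $\rx_0-\ry_0=\sqrt{X_2-X_3}$, whence
\begin{align*}
  \rx_0^2+\ry_0^2 &= X_2-\tfrac12(X_3+X_4), \qquad \rx_0\ry_0 = \tfrac14(X_3-X_4),
  \\
  \rw_0 &= \tfrac14\bigl(2X_2+X_3+X_4\bigr)+\tfrac12\bigl(\rx_0^2-\ry_0^2\bigr),
\end{align*}
so the only surviving radical is $\rx_0^2-\ry_0^2=\sqrt{(X_2-X_3)(X_2-X_4)}$. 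Each monomial coefficient of the potential has the shape $a\,\rx_0^2+b\,\ry_0^2+c\,\rx_0\ry_0+d\,\rw_0$, and substituting these identities turns it into a part linear in $X_2,X_3,X_4$ plus the multiple $\tfrac12(a-b+d)$ of that radical.

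I would then check that $a-b+d=0$ for every coefficient, so that all radicals drop out; this is done monomial by monomial, and \emph{a posteriori} it reflects the identity $F_0^{(\tilde E_7,G_1)}=A_1\cdot F_0^{\PP^1_{4,4,2}}$ with $A_1\in\SL2C$ established in Proposition~\ref{proposition: G_1 explicit}, though no such input is needed here. Once the radicals cancel, each coefficient is linear in $X_2,X_3,X_4$, and it remains to re-express it in the basis $\{X_2+X_4,\,X_2-X_4,\,X_3\}$ and to confirm the collected coefficients reproduce the three groupings of~\eqref{eq: G_1 FM potential}; for example the coefficient of $t_{aJ}^4$ collapses to $-\tfrac{1}{24}(X_2+X_3+X_4)$, matching the $-\tfrac{1}{24}(X_2+X_4)$ and $-\tfrac{1}{24}X_3$ pieces of the claim.

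The only genuine obstacle is the bookkeeping rather than any point of principle: the substitution, the radical cancellation, and the basis collection must be carried out for all of the roughly twenty monomials while keeping the many double subscripts straight. As an independent safeguard I would expand a handful of the resulting coefficients against the power series for $X_2^{(\tau_0,\omega_0)},X_3^{(\tau_0,\omega_0)},X_4^{(\tau_0,\omega_0)}$ recorded in Section~\ref{section: SES Gmax}; this simultaneously confirms $X_k^{(\tau_0,\omega_0)}\in\QQ[[t]]$ and hence that~\eqref{eq: G_1 FM potential} has rational coefficients.
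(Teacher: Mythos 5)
Your proposal is correct and is essentially the paper's own proof: the paper disposes of this proposition in one line by invoking Eq.~\eqref{eq: xyz via HalpT} and the definition of the $\A^{(\tau_0,\omega_0)}$--action, i.e.\ exactly the substitution you carry out. Your organizing identity --- that a coefficient $a\,\rx_0^2+b\,\ry_0^2+c\,\rx_0\ry_0+d\,\rw_0$ contributes the radical with weight $\tfrac12(a-b+d)$, which vanishes term by term --- is a clean way to package the bookkeeping the paper leaves implicit.
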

  
\subsubsection{CY/LG correspondence}
Consider the change of the variables:
\begin{equation}\label{eq: G1 linear change}
\begin{aligned}
  & t_J = t_0, \ t_{a^2 J} = \tau
  \\
  & t_{aJ} = \frac{t_1}{\sqrt{2}}, \ t_{bJ} = \frac{t_2}{2} + \frac{\sqrt{-1} t_3}{2}, \ t_{a^2 bJ} = \frac{t_2}{2}-\frac{\sqrt{-1} t_3}{2}, \ t_{c,xy} = 2 \sqrt{2} t_4.
\end{aligned}
\end{equation}
By using Eq.~\eqref{eq: G_1 FM potential} we get:
  \begin{align*}
    F_0^{(\tilde E_7, G_1)} &= \frac{1}{2} t_0^2 \tau +\frac{1}{4} t_0 \sum_{k=1}^4 t_k^2  
    -\frac{1}{16} \left(t_3^2 t_4^2 + t_1^2 t_2^2 \right) X_2^{(\tau_0,\omega_0)}(\tau)  -\frac{1}{16} \left(t_1^2 t_3^2 + t_2^2 t_4^2 \right) X_4^{(\tau_0,\omega_0)}(\tau) 
    \\
    &  - \frac{1}{16} \left( t_2^2 t_3^2 + t_1^2 t_4^2 \right) X_3^{(\tau_0,\omega_0)}(\tau) - \frac{1}{96} \left( \sum_{k=1}^4 t_k^4 \right) \left( \sum_{k=2}^4 X_k^{(\tau_0,\omega_0)} (\tau) \right).
  \end{align*}
  It's obvious that we get:
  $$
    F_0^{(\tilde E_7, G_1)}(\tilde \bt(\bt))  = \A^{(\tau_0,\omega_0)} \cdot F_{an}^{\PP^1_{2,2,2,2}}.
  $$
  In order to derive the equality for the potential $F_0^{\PP^1_{2,2,2,2}}$ we apply Proposition~\ref{proposition: sl Fan to F}. We proved:
  
  \begin{proposition}\label{proposition: G_1 explicit}
  For the linear change of the variables as above holds:
  $$
    F_0^{(\tilde E_7, G_1)}(\tilde \bt(\bt))  = A^{G_1} \cdot F_0^{\PP^1_{2,2,2,2}}, \quad 
    A^{G_1} := 
      \begin{pmatrix}
	  \dfrac{1}{\Theta} & - \pi \Theta
	  \\
	  \dfrac{1}{2\pi \Theta} & \dfrac{\Theta}{2}
	\end{pmatrix}
  $$
  for $\Theta = \sqrt{2\pi}/\left(\Gamma(\frac{3}{4})\right)^2$.
  \end{proposition}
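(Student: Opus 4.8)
The plan is to chain together the explicit closed form already obtained for $F_0^{(\tilde E_7,G_1)}$ with the dictionary of Proposition~\ref{proposition: sl Fan to F} relating the analytic and genuine Gromov--Witten potentials. The first link is the identity, established just above the statement, that after the substitution Eq.~\eqref{eq: G1 linear change} one has $F_0^{(\tilde E_7,G_1)}(\tilde\bt(\bt)) = \A^{(\tau_0,\omega_0)}\cdot F_{an}^{\PP^1_{2,2,2,2}}$; I would make this precise by a monomial--by--monomial comparison. Starting from the closed form of Proposition~\ref{proposition: primary potential of W,G_1}, the new coordinates turn each term into one of the terms of $F_{an}^{\PP^1_{2,2,2,2}}$ (Notation~\ref{notation: analytic potential X2}), with every occurrence of $X_k^\infty(\tau)$ replaced by $X_k^{(\tau_0,\omega_0)}(\tau)$. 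By Notation~\ref{notation: Xtauomega} and Eq.~\eqref{eq: SL action on Halp} this replacement is exactly the substitution defining the $\A^{(\tau_0,\omega_0)}$--action on the Halphen solutions, so the two functions coincide.

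The second link is to trade the action on the analytic potential for an $\SL2C$--action on the true potential $F_0^{\PP^1_{2,2,2,2}}$. Here I would apply Proposition~\ref{proposition: sl Fan to F} with $A = \A^{(\tau_0,\omega_0)}$ and $k=2$, which yields
$$
  \A^{(\tau_0,\omega_0)}\cdot F_{an}^{\PP^1_{2,2,2,2}}(\tau) = \left( (\A^{(\tau_0,\omega_0)})' \cdot F_0^{\PP^1_{2,2,2,2}}(\bt) \right)\Big|_{t_{-1}=\tau},
$$
where the primed matrix multiplies the first row by $\lambda_2$ and the second by $\lambda_2^{-1}$, with $\lambda_2 = \sqrt{\pi\sqrt{-1}}$. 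Setting $A^{G_1} := (\A^{(\tau_0,\omega_0)})'$ is then precisely the assertion, so it remains to evaluate this matrix. Substituting $\tau_0 = \sqrt{-1}$ (so that $\bar\tau_0 = -\sqrt{-1}$ and $\Im(\tau_0) = 1$) into the definition of $\A^{(\tau_0,\omega_0)}$ gives the entries $\tfrac{1}{2\omega_0}$, $\sqrt{-1}\,\omega_0$, $\tfrac{\sqrt{-1}}{2\omega_0}$, $\omega_0$; rescaling by $\lambda_2^{\pm1}$ and inserting the value of $\omega_0$ from Theorem~\ref{theorem: CYLG Gmax} together with $\lambda_2^2 = \pi\sqrt{-1}$ should collapse all transcendental factors to the entries of $A^{G_1}$ with $\Theta = \sqrt{2\pi}/\left(\Gamma(\tfrac34)\right)^2$.

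I expect the only real difficulty to be the bookkeeping of the normalization constants in this last step. The value of $\omega_0$ is the one calibrated for the $\PP^1_{4,4,2}$ correspondence (and hence carries $\lambda_4 = \lambda_2/\sqrt2$), whereas here we rescale with $\lambda_2$, so factors of $\sqrt2$ must be tracked with care, together with the fixed branch of $\lambda_2$ and the $q = q_k(\tau)$ normalization implicit in $F_{an}^{\PP^1_{2,2,2,2}}$; these are exactly the data that distinguish $A^{G_1}$ from the matrix one obtains in the $G_{max}$ case. A clean consistency check to run at the very end is that $\det A^{G_1} = \tfrac{1}{\Theta}\cdot\tfrac{\Theta}{2} - (-\pi\Theta)\cdot\tfrac{1}{2\pi\Theta} = 1$, confirming $A^{G_1}\in\SL2C$ as it must be.
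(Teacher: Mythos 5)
Your plan reproduces the paper's own two--step argument: first the monomial--by--monomial identification $F_0^{(\tilde E_7,G_1)}(\tilde\bt(\bt)) = \A^{(\tau_0,\omega_0)}\cdot F_{an}^{\PP^1_{2,2,2,2}}$, then Proposition~\ref{proposition: sl Fan to F} with $k=2$ to pass to $F_0^{\PP^1_{2,2,2,2}}$. The route is the right one, but there is a genuine gap at exactly the point you defer to ``bookkeeping of normalization constants'': carrying out the last step as you describe does \emph{not} yield the stated matrix. With $\tau_0=\sqrt{-1}$ and $\omega_0=\lambda_4\Theta=\lambda_2\Theta/\sqrt2$ one gets
\begin{equation*}
\left(\A^{(\tau_0,\omega_0)}\right)' \;=\;
\begin{pmatrix} \lambda_2 & 0\\ 0 & \lambda_2^{-1}\end{pmatrix}
\begin{pmatrix} \tfrac{1}{2\omega_0} & \sqrt{-1}\,\omega_0\\ \tfrac{\sqrt{-1}}{2\omega_0} & \omega_0\end{pmatrix}
\;=\;\frac{1}{\sqrt2}
\begin{pmatrix} \tfrac{1}{\Theta} & -\pi\Theta\\ \tfrac{1}{\pi\Theta} & \Theta\end{pmatrix}
\;=\;\begin{pmatrix} \tfrac{1}{\sqrt2} & 0\\ 0 & \sqrt2\end{pmatrix} A^{G_1},
\end{equation*}
so every entry differs from the asserted $A^{G_1}$ by a factor $\sqrt2^{\pm1}$. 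Your proposed consistency check is powerless here: both matrices lie in $\SL2C$, a determinant cannot detect a left factor $\mathrm{diag}(\mu,\mu^{-1})$, and such a factor acts nontrivially through Eq.~\eqref{eq:SLAction} (it is a genuine rescaling of the twisted variables and of $t_{-1}$), so the discrepancy cannot be absorbed silently. To close the proof you must either exhibit the extra $\mathrm{diag}(\sqrt2,1/\sqrt2)$ explicitly --- the natural suspect is the $\psi_k(q^2)$ versus $\psi_k(q)$ convention in $F_0^{\PP^1_{2,2,2,2}}$, i.e.\ which power of the Novikov variable plays the role of the nome when $F_{an}^{\PP^1_{2,2,2,2}}$ is matched with $F_0^{\PP^1_{2,2,2,2}}$ --- or conclude that the stated matrix (equivalently, the intermediate pair $(\tau_0,\omega_0)$) must be corrected by that diagonal factor. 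Note that the identical recipe applied in the $G_3$ case does reproduce $A^{G_3}$ on the nose from $\A^{(\tau_3,\omega_3)}$ with $\tau_3=1+\tau_0/2$, $\omega_3=\sqrt2\omega_0$, so the mismatch is specific to the data entering the $G_1$ computation and not an artifact of Proposition~\ref{proposition: sl Fan to F} itself.

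A secondary point in your first step: after the substitution Eq.~\eqref{eq: G1 linear change} the resulting expression carries $X_2^{(\tau_0,\omega_0)}$ on the monomial $t_3^2t_4^2+t_1^2t_2^2$ and $X_4^{(\tau_0,\omega_0)}$ on $t_1^2t_3^2+t_2^2t_4^2$, which is the opposite pairing to that of $F_{an}^{\PP^1_{2,2,2,2}}$ in Notation~\ref{notation: analytic potential X2}. The monomial--by--monomial comparison you promise therefore only closes after the further relabelling $t_2\leftrightarrow t_3$ (a different assignment of the twisted--sector basis), which must be folded into the ``linear change of the variables as above''; this does not affect the matrix but it does affect the precise statement being proved.
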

  
\subsection{Case 2: $2$--dimensional broad sector}
Consider $W = x^4 + y^4 + z^2$ and the symmetry group $G_2 := \langle a, b\rangle$, where $a = (1/4,1/4,1/2)$, $b = (0,1/2,0)$. We have $a = J \in G_2$ and $a^3J = J^{-1}$. The state space $\H$ has the following basis:
$$
  \H = \left\lbrace [J,1],[ab,1],[a^3b,1],[a^2b,xy],[b,x],[a^3,1] \right\rbrace.
$$
By using the selection rule, degree axiom and $G_{max}$--invariance axiom the genus $0$ potential of the FJRW -- theory $(\tilde E_7,G_2)$ reads
\begin{align*}
  & F_0^{(\tilde E_7,G_2)} = \frac{1}{2} t_{a^3} t_J^2+t_J \left(t_{ab} t_{a^3 b}+\frac{t_{b,x}^2}{16}+\frac{1}{32} t_{a^2 b,xy}^2\right)+t_{b,x}^4 g_1(t_{a^3})+t_{b,x}^2 t_{a^2 b,xy}^2 g_2(t_{a^3})+t_{a^2 b,xy}^4 g_3(t_{a^3})
  \\
  & +t_{ab} t_{a^3 b} t_{b,x}^2 g_4(t_{a^3})+t_{ab} t_{a^3 b} t_{a^2 b,xy}^2 g_5(t_{a^3})+t_{a^3 b}^2 t_{b,x}^2 h_1(t_{a^3})+t_{a^3 b}^2 t_{a^2 b,xy}^2 h_2(t_{a^3})+t_{ab}^2 t_{b,x}^2 h_3(t_{a^3})
  \\
  &+t_{ab}^2 t_{a^2 b,xy}^2 h_4(t_{a^3})+t_{ab} t_{a^3 b}^3 f_{0,1}(t_{a^3})+t_{ab}^3 t_{a^3 b} f_{0,2}(t_{a^3})+t_{a^3 b}^4 f_{1,1}(t_{a^3})+t_{ab}^2 t_{a^3 b}^2 f_{1,2}(t_{a^3})+t_{ab}^4 f_{1,3}(t_{a^3}),
\end{align*}
for some unknown functions $g_k(t)$, $h_k(t)$ and $f_{k,l}(t)$. However from the selection rule~\ref{subsection: FJR selection rule} we know that all functions $g_k(t)$ are odd while the functions $h_k(t)$ are even.
The correlators of the $(\tilde E_7,G_2)$ theory involving narrow insertions only are concave. Hence we can identify some of the functions above with those from $(\tilde E_7,G_{max})$ -- theory. We have:
\begin{align*}
  & f_{0,1}(t_{a^3}) = 0, \ f_{0,2}(t_{a^3}) = 0, \ f_{1,1}(t_{a^3}) = -\frac{\rx_0^2}{48} +\frac{\rx_0 \ry_0}{8} -\frac{\ry_0^2}{48},
  \\
  & f_{1,2}(t_{a^3}) = -\frac{\rw_0}{2} + \frac{3 \rx_0^2}{8} - \frac{\rx_0 \ry_0}{4} -\frac{\ry_0^2}{8}, \ f_{1,3}(t_{a^3}) = -\frac{\rx_0^2}{48}+\frac{\rx_0 \ry_0}{8} -\frac{\ry_0^2}{48}.
\end{align*}

\subsubsection{The WDVV equation}\label{section: G_2 solving wdvv}
  Writing the WDVV equation for $F_0^{(\tilde E_7,G_2)}$ it's enough to consider Eq.~\eqref{eq: wdvv} with the parameters 
  \begin{align*}
    \left\{t_{ab},t_{ab},t_{a^3 b},t_{a^2 b,xy}\right\}, \left\{t_{a^2 b,xy},t_{a^2 b,xy},t_{ab},t_{ab}\right\}, \left\{t_{a^3 b},t_{a^3 b},t_{a^2 b,xy},t_{a^2 b,xy}\right\}, 
    \\
    \left\{t_{a^3 b},t_{a^2 b,xy},t_{b,x},t_{ab}\right\}, \left\{t_{b,x},t_{b,x},t_{a^2 b,xy},t_{a^2 b,xy}\right\}, \left\{t_{a^2 b,xy},t_{ab},t_{a^2 b,xy},t_{ab}\right\}.
  \end{align*}
  We get two cases.
  The first one is when $h_2(t) \equiv 0$ or $h_4(t) \equiv 0$. This case also concludes $f_{1,1}(t) \equiv 0$, what we know to be false. For the second case we have $h_2(t)h_4(t) \not\equiv 0$ and the following system should be solved:
  \begin{align*}
    &g_3'(t) = \frac{1}{2} g_5(t)^2 + \frac{2}{3} h_2(t) h_4(t) -64 g_3(t) g_5(t), 
    \\ 
    &g_5'(t) = - 32 g_5(t) ^2 + 128 h_2(t) h_4(t), 
    \\
    &h_2'(t) = 128 \left(g_5(t) - 96 g_3(t)\right) h_2(t), 
    \\ 
    &h_4'(t) = 128 \left(g_5(t) - 96 g_3(t)\right) h_4(t),
  \end{align*}
  and also
  \begin{align*}
    f_{1,1}(t) &= -\frac{h_2(t)}{h_4(t)} \left(256 g_3(t) - 4 g_5(t)\right), \ f_{1,2}(t) =  8 \left( 192 g_3(t) - g_5(t) \right), \ g_1(t) =  4g_3(t),
    \\
    g_2(t) &= 12 g_3(t) -\frac{1}{8} g_5(t), \ g_4(t) = 2 g_5(t), \ h_1(t) = - 2h_2(t), \ h_3(t) = -2 h_4(t),
    \\
    h_4(t) &\not\equiv 0, \ h_4(t) \not\equiv 0.
  \end{align*}
  From the PDEs on $h_2$ and $h_4$ we see that $h_2(t) = c h_4(t)$ for some non--zero complex $c \in \CC$. Hence we get an expression of $g_3(t)$ and $g_5(t)$ via the functions $X_k^{(\tau_0,\omega_0)}$ and the constant $c$. 
  \begin{align*}
    g_3(t) &= -\frac{1}{24576 c} \left((3c+1) X_2^{(\tau_0,\omega_0)}(t) + 2 (3c-1) X_3^{(\tau_0,\omega_0)}(4t) + (3c+1) X_4^{(\tau_0,\omega_0)}(t) \right),
    \\
    g_5(t) &= -\frac{1}{128 c} \left( (c+1) X_2^{(\tau_0,\omega_0)}(t) + 2 (c-1) X_3^{(\tau_0,\omega_0)}(4t) +(c+1) X_4^{(\tau_0,\omega_0)}(t) \right).
  \end{align*}
  However we also have two PDEs on $g_3(t)$ and $g_5(t)$ that give us the compatibility condition:
  $$
    \frac{3}{2}\left(64 g_3(t) g_5(t)-\frac{1}{2} g_5(t)^2 + g_3'(t)\right) = \frac{1}{128}\left(32 g_5(t){}^2+g_5'(t)\right)
  $$
  Putting the explicit expressions of $g_3(t)$ and $g_5(t)$ via the functions $X_k^{(\tau_0,\omega_0)}$ here we get that this condition is satisfied if and only if $c^2 = 1$. Knowing the functions $g_3(t)$ and $g_5(t)$ explicitly we resolve the function $h_2(t)$ as the square root.

  This gives us two solutions to the WDVV equation and consider them both in what follows.
  
  \subsubsection{Positive solution}
  For $c = 1$ we get the following solution to this system:
  \begin{align*}
    & f_1(t) = \frac{1}{48} \left(-X^{(\tau_0,\omega_0)}_2 +2 X^{(\tau_0,\omega_0)}_3 -X^{(\tau_0,\omega_0)}_4 \right), \ f_2(t) = -\frac{1}{8} \left(X^{(\tau_0,\omega_0)}_2   + 2 X^{(\tau_0,\omega_0)}_3  +X^{(\tau_0,\omega_0)}_4 \right),
    \\
    & g_1(t)  =  -  \frac{1}{1536} \left( X^{(\tau_0,\omega_0)}_2 +X^{(\tau_0,\omega_0)}_3 +X^{(\tau_0,\omega_0)}_4 \right), \ g_2(t)  =  -\frac{1}{512} X^{(\tau_0,\omega_0)}_3,
    \\
    & h_1(t)  = h_3(t) =  -\frac{1}{64} \sqrt{\left(X^{(\tau_0,\omega_0)}_2 -X^{(\tau_0,\omega_0)}_4 \right)^2}, \ h_2(t) = h_4(t)  =  \frac{1}{128} \sqrt{\left(X^{(\tau_0,\omega_0)}_2 -X^{(\tau_0,\omega_0)}_4 \right)^2},
    \\
    & g_3(t) =  - \frac{1}{6144} \left( X^{(\tau_0,\omega_0)}_2 + X^{(\tau_0,\omega_0)}_3 + X^{(\tau_0,\omega_0)}_4 \right), \ g_4(t)  =  -\frac{1}{32} \left(X^{(\tau_0,\omega_0)}_2 + X^{(\tau_0,\omega_0)}_4 \right),
    \\
    &g_5(t)  =  -\frac{1}{64} \left(X^{(\tau_0,\omega_0)}_2 +X^{(\tau_0,\omega_0)}_4 \right).
  \end{align*}
  


  \subsubsection{Negative solution} For $c = -1$ we get the following answer.
  \begin{align*}
    & f_1(t)  =  \frac{1}{48} \left(2 X_3^{(\tau_0,\omega_0)} -X_2^{(\tau_0,\omega_0)} -X_4^{(\tau_0,\omega_0)}  \right), \ f_2(t) =  - \frac{1}{8} \left(X_2^{(\tau_0,\omega_0)}  +2 X_3^{(\tau_0,\omega_0)} +X_4^{(\tau_0,\omega_0)}  \right),
    \\
    & g_1(t)  =  - \frac{1}{3072} \left( X_2^{(\tau_0,\omega_0)}  +4 X_3^{(\tau_0,\omega_0)}  +X_4^{(\tau_0,\omega_0)} \right) , \ g_2(t)  =  -\frac{1}{1024} \left( X_2^{(\tau_0,\omega_0)}  +X_4^{(\tau_0,\omega_0)} \right),
    \\
    & h_1(t) = - h_3(t)  = 2 h_4(t) = -2 h_2(t) =    \frac{1}{32} \sqrt{\left(X_2^{(\tau_0,\omega_0)}  -X_3^{(\tau_0,\omega_0)}  \right) \left(X_3^{(\tau_0,\omega_0)}  -X_4^{(\tau_0,\omega_0)}  \right)}, 
    \\
    &g_3 (t) =  -\frac{1}{12288} \left( X_2^{(\tau_0,\omega_0)} +4 X_3^{(\tau_0,\omega_0)} +X_4^{(\tau_0,\omega_0)}\right), \ g_4(t)  =  -\frac{1}{16} X_3^{(\tau_0,\omega_0)}, \ g_5  =  -\frac{1}{32} X_3^{(\tau_0,\omega_0)}.
  \end{align*}
  
  \subsubsection{Comparison of the two solutions} 
  In both ``negative'' and ``positive'' solutions above, some square roots need to be resolved. This makes one more sign choice for both cases. However it's easy to see that this sign choice can be realized as the scaling of the variables $t_{ab}$, $t_{a^3b}$, preserving the cubic terms. Because we make our computation modulo such rescaling here, we can make a particular choice of this square root resolution in both cases.
  
  Let $F_0^+$ and $F_0^-$ be the two primary genus zero potentials given by the ``positive'' and ``negative'' solutions to the WDVV above respectively. We establish the connection between them.
  
  \begin{proposition}\label{prop: G_2 two potentials together}
    Let $F_0^+$ be written in coordinates $t_{g,\phi(\bx)}^+$ and $F_0^-$ be written in coordinates $t_{g,\phi(\bx)}^-$. Then they are connected by the following linear change of the variables:
    \begin{align*}
      & t_{J}^- = K^{-2} t_{J}^+, \ t_{J^{-1}}^- = K^2 t_{J^{-1}}^+,
      \\
      & t_{ab}^- = \frac{(1-\sqrt{-1}) K }{\sqrt{2}}t_{ab}^+, \ t_{a^3 b}^- = \frac{(1+\sqrt{-1}) K}{\sqrt{2}}t_{a^3 b}^+, \ t_{a^2 b,xy}^- = K t_{a^2 b,xy}^+, \ t_{b,x}^- = K t_{b,x}^+,
    \end{align*}
    where $K = e^{\pi \sqrt{-1}/2}$.
  \end{proposition}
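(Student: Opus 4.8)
The plan is to prove this by a \emph{direct substitution}. Since the two primary potentials $F_0^+$ and $F_0^-$ have already been written out completely in terms of $X_2^{(\tau_0,\omega_0)},X_3^{(\tau_0,\omega_0)},X_4^{(\tau_0,\omega_0)}$ (equivalently in terms of $\rx_0,\ry_0,\rz_0,\rw_0$), the assertion is an identity between two fully explicit functions, so I would verify it monomial by monomial. Concretely I would encode the claimed change of variables as a diagonal rescaling $t^-_\bullet=m_\bullet\,t^+_\bullet$, with $m_J=K^{-2}$, $m_{J^{-1}}=K^{2}$, $m_{b,x}=m_{a^2b,xy}=K$, and $m_{ab},m_{a^3b}$ equal to the two phases $\tfrac{(1\mp\sqrt{-1})K}{\sqrt2}$, i.e. $e^{\pi\sqrt{-1}/4}$ and $e^{3\pi\sqrt{-1}/4}$, substitute these into $F_0^-$, and compare the result with $F_0^+$. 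In the background I would keep in mind the paper's own principle that the axioms reconstruct the potential only up to a scaling: this makes it plausible \emph{a priori} that the discrete ambiguity $c=\pm1$ between the two solutions is implementable by some rescaling, and the task is then to identify it explicitly.

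First I would dispose of the quadratic and cubic part. These terms carry the fixed pairing and contain no $X_k$, so matching them reduces to checking that the product of the rescaling factors along each monomial is the required constant — a finite computation inside the group of roots of unity generated by $K$. This step constrains the phases and explains why the two distinguished directions $t_J,t_{J^{-1}}$ must be scaled by $K^{\mp2}$. Once the skeleton is matched, all remaining content sits in the quartic coefficients.

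The key analytic input for the quartic part is the behaviour of the triple $X_2^{(\tau_0,\omega_0)},X_3^{(\tau_0,\omega_0)},X_4^{(\tau_0,\omega_0)}$ under the argument change $t_{J^{-1}}\mapsto K^2t_{J^{-1}}$ induced by the rescaling. I would record the resulting functional equations, read either from the defining $q$-series in Section~\ref{section: SES Gmax} or from the theta-constant presentation of Notation~\ref{notation: Xtauomega} via the $\SL2C$-formula Eq.~\eqref{eq: SL action on Halp}; they express each $X_k$ at the transformed argument through the $X_\ell$ at the original argument, so that the transformation permutes and re-signs the triple. The aim is that the discrete difference between the two solutions — the sign $c=\pm1$, i.e. $h_2=\pm h_4$ — is absorbed exactly by the combination of these functional equations with the root-of-unity factors acquired by the mixed monomials $t_{ab}^2t_{a^3b}^2$, $t_{ab}^3t_{a^3b}$, $t_{ab}t_{a^3b}^3$, $t_{ab}t_{a^3b}t_{b,x}^2$ and $t_{ab}t_{a^3b}t_{a^2b,xy}^2$; translating back through Eq.~\eqref{eq: xyz via HalpT} should then carry every coefficient function $g_k,h_k$ of $F_0^-$ onto the corresponding one of $F_0^+$.

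The hard part will be the consistent bookkeeping of the square-root branches. Both solutions contain functions defined only up to sign — the $h_i$, built from $\sqrt{(X_2-X_4)^2}$ on one side and from $\sqrt{(X_2-X_3)(X_3-X_4)}$ on the other — and a coherent branch must be transported through the functional equations above. As already observed after the two solutions, this residual sign freedom is itself realised by a cubic-preserving rescaling of $t_{ab}$ and $t_{a^3b}$, so my strategy would be to fix the branches at the outset and then confirm that the phases $e^{\pi\sqrt{-1}/4}$ and $e^{3\pi\sqrt{-1}/4}$ attached to $m_{ab},m_{a^3b}$ are precisely those needed to reconcile the two branch choices after the transformation. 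The one place where genuine care rather than routine algebra is required is checking that no inconsistency arises among the several overlapping quartic monomials; once each quartic coefficient has been matched, the identity $F_0^+(t^+)=F_0^-(t^-)$ follows.
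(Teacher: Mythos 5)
Your strategy (full term-by-term verification of the identity $F_0^-(t^-(t^+))=F_0^+(t^+)$ at all orders in $t_{a^3}$) is workable, but it is a genuinely different and substantially heavier route than the paper's. The paper's proof is a two-line reduction: both $F_0^+$ and $F_0^-$ were obtained in Section~\ref{section: G_2 solving wdvv} as solutions of a first-order ODE system in the coefficient functions (the WDVV equations), so a diagonal change of variables that preserves the cubic terms carries one WDVV solution of the prescribed shape to another, and it then suffices to match the \emph{four-point correlators}, i.e.\ to compare the coefficient functions at the single point $t=0$, where $X_2^{(\tau_0,\omega_0)}(0)=1/4$, $X_3^{(\tau_0,\omega_0)}(0)=0$, $X_4^{(\tau_0,\omega_0)}(0)=-1/4$ are explicit rational numbers. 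That turns the whole proposition into a finite computation with roots of unity. What your route buys is independence from the uniqueness step; what it costs is the need for genuine transcendental identities.

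On that last point, one warning: the ``functional equations'' you plan to read off from the $q$-series or from Eq.~\eqref{eq: SL action on Halp} are not routine. For generic $\tau_0$ there is \emph{no} relation expressing $X_k^{(\tau_0,\omega_0)}(K^2 t)$ through the $X_\ell^{(\tau_0,\omega_0)}(t)$, and the two WDVV solutions would not be related by any diagonal rescaling. The identities you need (for instance $X_2^{(\tau_0,\omega_0)}(t)+X_4^{(\tau_0,\omega_0)}(t)=2\sqrt{-1}\,X_3^{(\tau_0,\omega_0)}(\sqrt{-1}\,t)$, which is what converts the coefficient $-\tfrac{1}{32}(X_2+X_4)$ of $g_4^+$ into the $-\tfrac{1}{16}X_3$ of $g_4^-$) hold only because $\tau_0=\sqrt{-1}$ is the elliptic fixed point of $\tau\mapsto -1/\tau$, so that the rescaling $\omega_0\mapsto e^{-\pi\sqrt{-1}/2}\omega_0$ of Lemma~\ref{lemma: X_k scaling} can be traded for a modular transformation permuting the theta constants. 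You must prove these identities separately (e.g.\ from the series expansions in Section~\ref{section: SES Gmax}); they are nowhere stated in the paper, precisely because the $t=0$ comparison makes them unnecessary. With that supplement, and with the branch bookkeeping you already anticipate, your verification closes; without it, the ``quartic matching'' step is not yet justified.
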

  \begin{proof}
    It's enough to compare the 4--point correlators, what in our case amounts to the comparison of the potentials with $X_k^{(\tau_0,\omega_0)}$ evaluated at the point $t = 0$. The rest is straightforward.
  \end{proof}

  \begin{proposition}\label{proposition: primary potential of W,G_2}
    Up to a scaling of the variables the genus zero primary potential of the FJRW theory of $(\tilde E_7,G_2)$ reads:    
  \begin{equation*}
    \begin{aligned}
      F_0^{(\tilde E_7, G_2)} &= t_{ab} t_{a^3 b} t_J+\frac{1}{2} t_{a^3} t_J^2+\frac{1}{16} t_J t_{b,x}^2+\frac{1}{32} t_J t_{a^2 b,xy}^2 -\Big(\frac{t_{ab}^4}{48}+\frac{t_{a^3 b}^4}{48}+\frac{t_{b,x}^4}{1536}
      \\
      & +\frac{t_{a^2 b,xy}^4}{6144}+\frac{1}{8} t_{ab}^2 t_{a^3 b}^2 +\frac{1}{32} t_{ab} t_{a^3 b} t_{b,x}^2+\frac{1}{64} t_{ab} t_{a^3 b} t_{a^2 b,xy}^2\Big) \left(X_2^{(\tau_0,\omega_0)} +X_4^{(\tau_0,\omega_0)}  \right)
      \\
      & +\left(\frac{1}{64} t_{ab}^2 t_{b,x}^2+\frac{1}{64} t_{a^3 b}^2 t_{b,x}^2-\frac{1}{128} t_{ab}^2 t_{a^2 b,xy}^2-\frac{1}{128} t_{a^3 b}^2 t_{a^2 b,xy}^2\right)\left( X_4^{(\tau_0,\omega_0)} -X_2^{(\tau_0,\omega_0)} \right) 
      \\
      & +\left(\frac{t_{ab}^4}{24}+\frac{t_{a^3 b}^4}{24} -\frac{t_{b,x}^4}{1536}-\frac{t_{a^2 b,xy}^4}{6144}-\frac{1}{4} t_{ab}^2 t_{a^3 b}^2-\frac{1}{512} t_{b,x}^2 t_{a^2 b,xy}^2\right) X_3^{(\tau_0,\omega_0)} 
    \end{aligned}
  \end{equation*}
  where $X_k^{(\tau_0,\omega_0)} = X_k^{(\tau_0,\omega_0)} \left(t_{a^3}\right)$ are as in Section~\ref{section: SES Gmax}.
  \end{proposition}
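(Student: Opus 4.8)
The plan is to follow the reconstruction scheme announced at the start of the section: write $F_0^{(\tilde E_7,G_2)}$ in terms of the unknown structure functions permitted by the selection rule, degree axiom and $G_{max}$--invariance, fix the concave part from the $G_{max}$--theory, and close the remaining unknowns with WDVV. The state space and the shape of the potential are already fixed above, and the functions $f_{1,1}, f_{1,2}, f_{1,3}$ are determined because the corresponding correlators carry only narrow insertions and hence lie in the concave sector; by Corollary~\ref{corollary: concave sectors} they agree with the known $G_{max}$--data expressed through $\rx_0, \ry_0, \rw_0$. The real content is therefore the determination of the broad--sector functions $g_1,\dots,g_5$ and $h_1,\dots,h_4$.

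First I would impose the WDVV equation~\eqref{eq: wdvv} on the listed quadruples of indices. This produces a system that separates into a nonlinear differential part in the four functions $g_3, g_5, h_2, h_4$ together with algebraic identities expressing $g_1, g_2, g_4, h_1, h_3$ (and a constraint on $f_{1,1}, f_{1,2}$) in terms of them. From the two ODEs satisfied by $h_2$ and $h_4$ one reads off $h_2 = c\, h_4$ for a constant $c \in \CC^*$; substituting this back, the remaining equations can be solved explicitly, giving $g_3$ and $g_5$ as linear combinations of $X_2^{(\tau_0,\omega_0)}, X_3^{(\tau_0,\omega_0)}, X_4^{(\tau_0,\omega_0)}$ with coefficients depending on $c$.

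The crux is the compatibility of the two leftover ODEs on $g_3$ and $g_5$. Inserting the explicit expressions and using that the triple $X_k^{(\tau_0,\omega_0)}$ solves the Halphen system~\eqref{eq: Halp in tau}, the compatibility condition collapses to the single algebraic relation $c^2 = 1$. This is the step I expect to be the main obstacle: one must verify that the parametric solution obeys the remaining differential constraint for no value of $c$ other than $\pm 1$, and this genuinely uses the Halphen identities rather than formal manipulation. The two admissible values $c = +1$ and $c = -1$ yield the ``positive'' and ``negative'' solutions; the square root needed to recover $h_2, h_4$ from $g_3, g_5$ adds only a sign, absorbable into a rescaling of $t_{ab}, t_{a^3b}$ that preserves the cubic terms.

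Finally I would remove the apparent ambiguity. By Proposition~\ref{prop: G_2 two potentials together} the positive and negative potentials are related by an explicit linear change of variables, so both represent the same CohFT up to a scaling --- exactly the precision claimed. Taking the $c = 1$ solution, substituting the closed forms of $g_1,\dots,g_5, h_1,\dots,h_4$ into $F_0^{(\tilde E_7,G_2)}$ and collecting the monomials according to the combinations $X_2^{(\tau_0,\omega_0)} + X_4^{(\tau_0,\omega_0)}$, $X_4^{(\tau_0,\omega_0)} - X_2^{(\tau_0,\omega_0)}$ and $X_3^{(\tau_0,\omega_0)}$ reproduces the stated formula, which completes the proof.
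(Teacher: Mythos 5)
Your proposal is correct and follows essentially the same route as the paper: concavity fixes $f_{0,1},f_{0,2},f_{1,1},f_{1,2},f_{1,3}$, WDVV on the listed quadruples yields the algebraic relations plus the ODE system forcing $h_2=c\,h_4$, the compatibility of the two ODEs on $g_3,g_5$ (evaluated via the Halphen system) gives $c^2=1$, and the two resulting solutions are identified up to a rescaling of variables via Proposition~\ref{prop: G_2 two potentials together}. No gaps to report.
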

  \begin{proof}
    It's easy to see that Proposition~\ref{prop: G_2 two potentials together} above performs the scaling $X_k^{(\tau_0,\omega_0)}(t) \to \sqrt{-1} \cdot X_k^{(\tau_0,\omega_0)} \left( \sqrt{-1} t \right)$. This can be obviously realized as an S--action of Givental. Together with the previous section we get the proof. 
  \end{proof}

\subsubsection{CY/LG correspondence}
By using explicit expression of all the functions coming to $F_0^+$ via $X_k^{(\tau_0,\omega_0)}(t)$ and applying the following change of variables:
\begin{align*}
 & t_J = t_0, \ t_{a^3} = \tau.
 \\
 & t_{ab} = \frac{1}{2} \left( t_1 + \sqrt{-1} t_2 \right), \ t_{a^3 b} = \frac{1}{2} \left( t_1 - \sqrt{-1} t_2 \right), \ t_{a^2 b,xy} = 2 \sqrt{2} t_3, \ t_{b,x} = 2 t_4.
\end{align*}
we get: 
\begin{align*}
  F_0^{(\tilde E_7,G_2)} &= \frac{1}{2} t_0^2 \tau +\frac{1}{4} t_0 \sum_{k=2}^5 t_k^2  
  -\frac{1}{16} \left(t_3^2 t_4^2 + t_1^2 t_2^2 \right) X_3^{(\tau_0,\omega_0)} (t) - \frac{1}{16} \left(t_1^2 t_3^2 + t_2^2 t_4^2 \right) X_4^{(\tau_0,\omega_0)}(t)
  \\
  &  - \frac{1}{16} \left( t_2^2 t_3^2 + t_1^2 t_4^2 \right) X_2^{(\tau_0,\omega_0)} (t) - \frac{1}{96} \left( \sum_{k=2}^5 t_k^4 \right) \left( \sum_{k=2}^4 X_k^{(\tau_0,\omega_0)}(t)\right).
\end{align*}
  It's obvious that we get:
  $$
    F_0^{(\tilde E_7, G_2)}(\tilde \bt(\bt))  = \A^{(\tau_0,\omega_0)} \cdot F_{an}^{\PP^1_{2,2,2,2}}.
  $$
  In order to derive the equality for the potential $F_0^{\PP^1_{2,2,2,2}}$ we apply Proposition~\ref{proposition: sl Fan to F}. We get:
  
\begin{proposition}\label{proposition: G_2 explicit}
  For the linear change of the variables holds:
  $$
    F_0^{(\tilde E_7, G_2)}(\tilde \bt(\bt))  = A^{G_2} \cdot F_0^{\PP^1_{2,2,2,2}}, \quad 
    A^{G_2} := 
      \begin{pmatrix}
	  \dfrac{1}{\Theta} & - \pi \Theta
	  \\
	  \dfrac{1}{2\pi \Theta} & \dfrac{\Theta}{2}
	\end{pmatrix}
  $$
  for $\Theta = \sqrt{2\pi}/\left(\Gamma(\frac{3}{4})\right)^2$.
\end{proposition}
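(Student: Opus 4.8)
The plan is to treat the proposition as the final, bookkeeping link of the $G_2$ computation. The genuinely hard work --- solving the WDVV system of Section~\ref{section: G_2 solving wdvv}, importing the concave correlators from $(\tilde E_7,G_{max})$ via Corollary~\ref{corollary: concave sectors}, and resolving the $c=\pm1$ square-root ambiguity --- has already produced the positive-solution potential $F_0^+$ written through the functions $X_k^{(\tau_0,\omega_0)}$. I would start from this explicit $F_0^{(\tilde E_7,G_2)}$, pass to the stated coordinates $\tilde\bt(\bt)$, identify the outcome with the $\SL2C$-transform of the analytic $\PP^1_{2,2,2,2}$ potential, and only then trade the analytic potential for the genuine Gromov--Witten potential by means of Proposition~\ref{proposition: sl Fan to F}.

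First I would substitute the linear change of variables into the explicit potential and regroup the quartic monomials into the three pairing combinations $t_3^2t_4^2+t_1^2t_2^2$, $t_1^2t_3^2+t_2^2t_4^2$, $t_2^2t_3^2+t_1^2t_4^2$ together with the totally symmetric $\sum_k t_k^4$. Comparing coefficients with Notation~\ref{notation: analytic potential X2}, the result is exactly $\A^{(\tau_0,\omega_0)}\cdot F_{an}^{\PP^1_{2,2,2,2}}$ once $X_2^{(\tau_0,\omega_0)},X_3^{(\tau_0,\omega_0)},X_4^{(\tau_0,\omega_0)}$ are matched to the three pairings. The one point needing attention is that in the $G_2$ answer these three functions occur in a permuted assignment relative to $F_{an}^{\PP^1_{2,2,2,2}}$; but the three quadratic pairings are precisely the three partitions of $\{1,2,3,4\}$ into two pairs, so the permutation is induced by a relabeling of $t_1,\dots,t_4$ (which fixes the invariant pieces $\tfrac14 t_0\sum t_k^2$, $\sum t_k^4$ and $\sum_{k}X_k^{(\tau_0,\omega_0)}$), and I absorb this relabeling into $\tilde\bt(\bt)$. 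This gives $F_0^{(\tilde E_7,G_2)}(\tilde\bt(\bt))=\A^{(\tau_0,\omega_0)}\cdot F_{an}^{\PP^1_{2,2,2,2}}$, the identity recorded just above the statement.

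Second, to replace $F_{an}^{\PP^1_{2,2,2,2}}$ by the true potential $F_0^{\PP^1_{2,2,2,2}}$ I would invoke Proposition~\ref{proposition: sl Fan to F} with $k=2$, which converts the action of $A=\A^{(\tau_0,\omega_0)}$ on the analytic potential into the action of the primed matrix $A'$ on $F_0^{\PP^1_{2,2,2,2}}$; hence $A^{G_2}=(\A^{(\tau_0,\omega_0)})'$. It then remains to evaluate this $2\times2$ matrix. Specializing $\tau_0=\sqrt{-1}$ (so $\mathrm{Im}(\tau_0)=1$, $\bar\tau_0=-\sqrt{-1}$) gives $\A^{(\tau_0,\omega_0)}=\left(\begin{smallmatrix}\frac{1}{2\omega_0}&\sqrt{-1}\,\omega_0\\ \frac{\sqrt{-1}}{2\omega_0}&\omega_0\end{smallmatrix}\right)$; substituting $\omega_0=\lambda_4\,\Theta$ with $\Theta=\sqrt{2\pi}/(\Gamma(3/4))^2$ and simplifying with the relations $\lambda_4=\lambda_2/\sqrt2$ and $\lambda_2^2=\pi\sqrt{-1}$ of Notation~\ref{notation: X_4} reduces $(\A^{(\tau_0,\omega_0)})'$ to the matrix displayed in the statement. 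As a check the determinant equals $\tfrac12+\tfrac12=1$, so $A^{G_2}\in\SL2C$, and the entries coincide on the nose with those of $A^{G_1}$ in Proposition~\ref{proposition: G_1 explicit}.

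The obstacle I expect is not conceptual but one of normalization: the conversion factors attached to the Novikov variable (the identification $q_{formal}=\exp(2\pi\sqrt{-1}\tau/k)$ of Eq.~\eqref{eq: Novikov to modular}, carried by the $\lambda_k$ in the prime operation) together with the branch choices for the square roots defining $X_k^{(\tau_0,\omega_0)}$ must be tracked with care, and one must keep straight that in $G_2$ there are two WDVV solutions $F_0^{\pm}$ differing only by the rescaling of Proposition~\ref{prop: G_2 two potentials together}. Getting every $\sqrt2$ and $\sqrt{-1}$ right is exactly what forces $A^{G_2}$ to agree identically with $A^{G_1}$ (and with the matrix arising in the remaining case); this coincidence of the $\SL2C$-elements is the structural fact that, after feeding the equality into Theorem~\ref{theorem: SL2C to Givental}, yields the single common $R$-action asserted in the main theorem.
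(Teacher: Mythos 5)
Your proposal follows essentially the same route as the paper: substitute the stated linear change of variables into the explicit positive WDVV solution $F_0^+$, recognize the result as $\A^{(\tau_0,\omega_0)}\cdot F_{an}^{\PP^1_{2,2,2,2}}$, and then convert to the genuine potential $F_0^{\PP^1_{2,2,2,2}}$ via Proposition~\ref{proposition: sl Fan to F}, reading off $A^{G_2}$ as the primed matrix. The only difference is that you make explicit two points the paper leaves implicit --- the absorption of the permutation of the $X_k^{(\tau_0,\omega_0)}$ relative to Notation~\ref{notation: analytic potential X2} into the relabeling of $t_1,\dots,t_4$, and the final arithmetic evaluation of $(\A^{(\tau_0,\omega_0)})'$ --- so the argument is correct and matches the paper's.
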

  
\subsection{Case 3: $3$--dimensional broad sector}
Consider $W = x^4 + y^4 + z^2$ and the symmetry group $G_3 := \langle a, b \rangle$, where $a = (1/4,1/4,0)$ and $b = (0,0,1/2)$. We have $ab = J \in G_3$ and $a^2J = J^{-1}$. The state space $\H$ has the following basis:
$$
  \H = \left\lbrace [J,1],[aJ,1],[b,x^2],[b,xy],[b,y^2],[a^2J,1] \right\rbrace.
$$
By using the selection rule, degree axiom and $G_{max}$--invariance axiom the genus $0$ potential of the FJRW -- theory $(\tilde E_7,G_3)$ reads:
\begin{align*}
F_0^{(\tilde E_7,G_3)} & = \frac{1}{2} t_J^2 t_{a^2 J}+t_J \left(\frac{t_{aJ}^2}{2}+\frac{t_{b,xy}^2}{32}+\frac{1}{16} t_{b,x^2} t_{b,y^2}\right)  +t_{b,y^2}^4 g_1\left(t_{a^2 J}\right)+t_{b,xy}^4 g_2\left(t_{a^2 J}\right)
\\
&+t_{b,x^2} t_{b,xy}^2 t_{b,y^2} g_3\left(t_{a^2 J}\right) +t_{b,x^2}^2 t_{b,y^2}^2 g_4\left(t_{a^2 J}\right)+t_{b,x^2}^4 g_5\left(t_{a^2 J}\right)+t_{aJ}^2 t_{b,xy}^2 g_6\left(t_{a^2 J}\right)
\\
&+t_{aJ}^2 t_{b,x^2} t_{b,y^2} g_7\left(t_{a^2 J}\right)
+t_{aJ} t_{b,xy} t_{b,y^2}^2 h_1\left(t_{a^2 J}\right)+t_{aJ} t_{b,x^2}^2 t_{b,xy} h_2\left(t_{a^2 J}\right) + t_{aJ}^4 f_{1,1}\left(t_{a^2 J}\right),
\end{align*}
for some unknown functions $g_k(t)$, $h_k(t)$ and $f_{1,1}(t)$. However from the selection rule~\ref{subsection: FJR selection rule} we know that all functions $g_k(t)$ and also $f_1(t)$ are odd while the functions $h_k(t)$ are even.

Note that the correlators of $(\tilde E_7,G)$ involving the insertions of $[J,1]$, $[a^2J,1]$ and $[aJ,1]$ only are concave. Hence we have an explicit expression for the function $f_{1,1}$ that we have found in $(\tilde E_7,G_{max})$. 
\begin{equation}\label{equation: f explicit}
  f_{1,1}(t) = \frac{1}{4} \left( -\frac{\rw_0(t)}{8} + \frac{\rx_0(t)^2}{12} - \frac{\ry_0(t)^2}{24} \right).
\end{equation}
For simplicity we are going to rescale this function for what follows: $f(t) := -16 f_{1,1}(t)$. Then we get:
$$
  f(t) = \frac{2}{3} X_2^{(\tau_0,\omega_0)}(t) + \frac{2}{3} X_3^{(\tau_0,\omega_0)}(t) + \frac{2}{3} X_4^{(\tau_0,\omega_0)}(t).
$$

\subsubsection{The WDVV equation}
Writing the WDVV equation of $F_0^{(\tilde E_7,G_3)}$ we get two cases: when $h_1(t)h_2(t) \equiv 0$ and $h_1(t)h_2(t) \not\equiv 0$. The first case gives system of equations that can be integrated explicitly giving $f_{1,1}(t)$ as a rational function. We know from Eq.\eqref{equation: f explicit} and the series expansion of $X_k^{(\tau_0,\omega_0)}(t)$ that this is not true. The second case is equivalent to the following system of equations:
\begin{equation}\label{eq: Example3 wdvv}
\begin{aligned}
  g_5'(t) & = \frac{16}{3} h_2(t)^2 -64 g_5(t) g_7(t),
  \\
  g_7'(t) & = 512 h_1(t) h_2(t) -32 g_7(t)^2,
  \\
  h_1'(t)& = \frac{64 h_1(t) \left(192 g_5(t) h_1(t)-g_7(t) h_2(t)\right)}{h_2(t)},
  \\
  h_2'(t)& = 64(192 g_5(t) h_1(t) -g_7(t) h_2(t)).
\end{aligned}
\end{equation}
and also:
\begin{equation}\label{eq: Example3 comp cond}
\begin{aligned}
  g_1(t)  &= \left(\frac{h_1(t)}{h_2(t)} \right)^2 g_5(t), \ g_2(t)  = \frac{1}{64} \left(g_7(t) -128 \frac{h_1(t) g_5(t)}{h_2(t)}\right), \ g_3(t)  = \frac{g_7(t)}{16},
  \\
  g_4(t)  &= \frac{g_7(t)}{16} - 6 \frac{ h_1(t) g_5(t)}{h_2(t)} , \ g_6(t)  = \frac{g_7(t)}{2}, \ f(t)  = \frac{8192 g_5(t) h_1(t)-64 g_7(t) h_2(t)}{h_2(t)}.
\end{aligned}
\end{equation}
To get the system above one should consider Eq.~\eqref{eq: wdvv} given by the following quadruples:
\begin{align*}
 \left\{t_{aJ},t_{aJ},t_{b,x^2},t_{b,x^2}\right\}, \left\{t_{aJ},t_{aJ},t_{b,x^2},t_{b,xy}\right\}, \left\{t_{aJ},t_{aJ},t_{b,x^2},t_{b,y^2}\right\} \left\{t_{aJ},t_{aJ},t_{b,xy},t_{b,xy}\right\}, 
 \\
 \left\{t_{aJ},t_{aJ},t_{b,xy},t_{b,y^2}\right\}, \left\{t_{aJ},t_{aJ},t_{b,y^2},t_{b,y^2}\right\},
  \left\{t_{aJ},t_{aJ},t_{b,x^2},t_{a^2 J}\right\}, \left\{t_{aJ},t_{aJ},t_{b,xy},t_{a^2 J}\right\}.
\end{align*}
%

\subsubsection{Solving the WDVV equation}\label{section: G_3 wdvv solution}
From Eq.\eqref{eq: Example3 wdvv} we conclude that $h_1(t) = c h_2(t)$ for some non--zero constant $c$. 

We are going to use now the relation between the functions $g_5(t)$, $g_7(t)$, $f(t)$ and explicitly known functions $X_k^{(\tau_0,\omega_0)}(t)$.
Due to the oddness of the functions $g_5(t)$ and $g_7(t)$ and Eq.~\eqref{eq: Example3 comp cond} we see that there is an odd function $p(t)$, s.t. holds:
$$
  g_7(t)= \frac{1}{64}p(t) - \frac{X_3^{(\tau_0,\omega_0)}}{32} , \quad g_5(t) = \frac{1}{8192 c}p(t) + \frac{1}{12288 c} \left(X_2^{(\tau_0,\omega_0)} + X_4^{(\tau_0,\omega_0)} - 2 X_3^{(\tau_0,\omega_0)}\right).
$$
From the first two PDEs on $g_5$ and $g_7$ we get the compatibility condition:
$$
  \frac{3}{16} \left(g_5'(t)+64 g_5(t) g_7(t)\right) = \frac{1}{512 c} \left(g_7'(t)+32 g_7(t)^2\right),
$$
that gives us the expression of $p^\prime(t)$ via $p(t)$ and $X_k^{(\tau_0,\omega_0)}$:
$$
  p^\prime(t) = p(t) \left(p(t) + 2 \left(X_2^{(\tau_0,\omega_0)} - X_3^{(\tau_0,\omega_0)} + X_4^{(\tau_0,\omega_0)}\right)\right).
$$
From the PDE on $g_7(t)$ we get the expression of $h_2(t)$, that we put into the PDE of $h_2(t)$ and get by using the formula for $p(t)$ above:
$$
  3 p(t) \left(p(t) + 2 \left(X_2^{(\tau_0,\omega_0)} - X_3^{(\tau_0,\omega_0)}\right)\right) \left(p(t) + 2 \left(X_4^{(\tau_0,\omega_0)} -X_3^{(\tau_0,\omega_0)} \right)\right) = 0,
$$
from where we find the function $p(t)$ to be one of the following three:
$$
  p(t) = 0, \ p(t) = -2 \left(X_2^{(\tau_0,\omega_0)}-X_3^{(\tau_0,\omega_0)}\right), \ p(t) = 2 \left(X_3^{(\tau_0,\omega_0)} - X_4^{(\tau_0,\omega_0)}\right)
$$
giving  the different solutions:
\begin{subequations}\label{eq: Example3 wdvv solutions}
\begin{equation}\label{eq: Example3 fjr}
\begin{aligned}
  g_7(t) =  -\frac{1}{32} X_3^{(\tau_0,\omega_0)}, & \quad g_5(t) =  \frac{1}{12288 c} \left(X_2^{(\tau_0,\omega_0)}-2 X_3^{(\tau_0,\omega_0)}+X_4^{(\tau_0,\omega_0)}\right),
  \\
  &h_2(t) =  \frac{1}{128} \sqrt{-\frac{1}{c}\left(X_2^{(\tau_0,\omega_0)}-X_3^{(\tau_0,\omega_0)}\right) \left(X_3^{(\tau_0,\omega_0)}-X_4^{(\tau_0,\omega_0)}\right)}.
\end{aligned}
\end{equation}
\begin{equation}
\begin{aligned}
  g_7(t) =  -\frac{1}{32} X_2^{(\tau_0,\omega_0)}, & \quad g_5(t) =  \frac{1}{12288 c} \left(-2 X_2^{(\tau_0,\omega_0)}+X_3^{(\tau_0,\omega_0)}+X_4^{(\tau_0,\omega_0)}\right),
  \\
  &h_2(t) =  \frac{1}{128} \sqrt{\frac{1}{c} \left(X_2^{(\tau_0,\omega_0)}-X_3^{(\tau_0,\omega_0)}\right) \left(X_2^{(\tau_0,\omega_0)}-X_4^{(\tau_0,\omega_0)}\right)}.
\end{aligned}
\end{equation}
\begin{equation}\label{eq: Example3 auxilary}
\begin{aligned}
  g_7(t) =  -\frac{1}{32} X_4^{(\tau_0,\omega_0)}, & \quad g_5(t) =  \frac{1}{12288 c} \left(X_2^{(\tau_0,\omega_0)}+X_3^{(\tau_0,\omega_0)}-2 X_4^{(\tau_0,\omega_0)}\right),
  \\
  &h_2(t) =  \frac{1}{128} \sqrt{\frac{1}{c} \left(X_2^{(\tau_0,\omega_0)}-X_4^{(\tau_0,\omega_0)}\right) \left(X_3^{(\tau_0,\omega_0)}-X_4^{(\tau_0,\omega_0)}\right)}.
\end{aligned}
\end{equation}
\end{subequations}

Actually only one of them --- Eq.~\eqref{eq: Example3 fjr} is correct for the FJRW theory because $g_7(t)$ is odd by the selection rule and from the series expansions of $X_k^{(\tau_0,\omega_0)}$ we know that only $X_3^{(\tau_0,\omega_0)}$ is odd. 

At the same time it's clear that the rescaling of the variables $t_{b,y^2} \to t_{b,y^2}/c$ and $t_{b,x^2} \to c t_{b,x^2}$ preserves the pairing fixed by $F_0^{(\tilde E_7,G_3)}$ and in the new coordinates this constant $c$ doesn't appear in the potential anymore. 

Hence up to this rescaling we can set $c=1$ and the WDVV equation has the unique solution. We get:

\begin{proposition}\label{proposition: primary potential E,G_3}
Up to a scaling of the variables the primary FJRW potential reads:
\begin{equation*}
  \begin{aligned}
    F_0^{(\tilde E_7,G_3)} & = \frac{1}{2} t_{aJ}^2 t_J+\frac{1}{2} t_J^2 t_{a^2 J}+\frac{1}{32} t_J t_{b,xy}^2+\frac{1}{16} t_J t_{b,x^2} t_{b,y^2}
    \\
    & +\left(\frac{t_{b,x^2}^4}{12288}+\frac{t_{b,y^2}^4}{12288} -\frac{t_{b,xy}^4}{6144} -\frac{t_{aJ}^4}{24} -\frac{t_{b,x^2}^2 t_{b,y^2}^2}{2048}\right)  \left( X_2^{(\tau_0,\omega_0)} + X_4^{(\tau_0,\omega_0)} \right)
    \\
    &+\frac{1}{128} \left( t_{aJ} t_{b,xy} t_{b,y^2}^2 + t_{aJ} t_{b,x^2}^2 t_{b,xy} \right) \sqrt{\left(X_3^{(\tau_0,\omega_0)} -X_2^{(\tau_0,\omega_0)} \right) \left(X_3^{(\tau_0,\omega_0)} -X_4^{(\tau_0,\omega_0)} \right)}
    \\
    & -\Bigg(\frac{1}{24} t_{aJ}^4 + \frac{t_{b,x^2}^4}{6144} + \frac{1}{64} t_{aJ}^2 t_{b,xy}^2 + \frac{t_{b,xy}^4}{6144} + \frac{1}{32} t_{aJ}^2 t_{b,x^2} t_{b,y^2} + \frac{1}{512} t_{b,x^2} t_{b,xy}^2 t_{b,y^2}
    \\
    & \quad +\frac{t_{b,x^2}^2 t_{b,y^2}^2}{1024} +\frac{t_{b,y^2}^4}{6144}\Bigg) X_3^{(\tau_0,\omega_0)},
  \end{aligned}
\end{equation*}
where $X_k^{(\tau_0,\omega_0)} = X_k^{(\tau_0,\omega_0)}\left(t_{a^2 J}\right)$ are as in Section~\ref{section: SES Gmax}. 
Moreover there is an S--action of Givental, performing the scaling of the variables, s.t. $\hat S \cdot F_0^{(\tilde E_7,G_3)} \in \QQ[[\bt]]$
\end{proposition}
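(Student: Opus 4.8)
The plan is to prove the two assertions in turn: first assemble the closed form of $F_0^{(\tilde E_7,G_3)}$ from the WDVV analysis already carried out, and then isolate and remove the single source of irrationality by a metric--preserving rescaling. For the explicit formula I would substitute the unique FJRW--admissible solution of the WDVV system, namely Eq.~\eqref{eq: Example3 fjr} with the normalization $c=1$, into the general ansatz for $F_0^{(\tilde E_7,G_3)}$. Concretely, the relations of Eq.~\eqref{eq: Example3 comp cond} express every unknown $g_k$ as a rational--coefficient combination of $X_2^{(\tau_0,\omega_0)}$, $X_3^{(\tau_0,\omega_0)}$, $X_4^{(\tau_0,\omega_0)}$, while (with $c=1$, so that $h_1=h_2$) the broad functions are $h_1=h_2=\frac{1}{128}\sqrt{(X_3^{(\tau_0,\omega_0)}-X_2^{(\tau_0,\omega_0)})(X_3^{(\tau_0,\omega_0)}-X_4^{(\tau_0,\omega_0)})}$; collecting monomials then yields the stated expression. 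I would cross--check the computation against the function $f(t)=\frac{2}{3}(X_2^{(\tau_0,\omega_0)}+X_3^{(\tau_0,\omega_0)}+X_4^{(\tau_0,\omega_0)})$ coming from Eq.~\eqref{equation: f explicit}, which pins down the overall normalization and gives an independent consistency check on the rational coefficients.

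For the rationality statement, the key observation is that the only non--rational ingredient is the square--root term. Using $X_k^{(\tau_0,\omega_0)}\in\QQ[[t]]$ (Section~\ref{section: SES Gmax}), both $X_3^{(\tau_0,\omega_0)}$ and $X_2^{(\tau_0,\omega_0)}+X_4^{(\tau_0,\omega_0)}$ lie in $\QQ[[t]]$, so every term of the potential except the one carrying $\sqrt{(X_3^{(\tau_0,\omega_0)}-X_2^{(\tau_0,\omega_0)})(X_3^{(\tau_0,\omega_0)}-X_4^{(\tau_0,\omega_0)})}$ already has coefficients in $\QQ[[\bt]]$. From the series of Section~\ref{section: SES Gmax} one computes that $(X_3^{(\tau_0,\omega_0)}-X_2^{(\tau_0,\omega_0)})(X_3^{(\tau_0,\omega_0)}-X_4^{(\tau_0,\omega_0)})=-\frac{1}{16}+\mathrm{O}(t)$ with rational coefficients, so its square root equals $\frac{\sqrt{-1}}{4}$ times a series in $\QQ[[t]]$ (the binomial series of $\sqrt{1+\mathrm{O}(t)}$ over $\QQ$). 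Hence the obstruction to rationality is a single factor of $\sqrt{-1}$ sitting in front of the two monomials $t_{aJ}t_{b,xy}t_{b,y^2}^2$ and $t_{aJ}t_{b,x^2}^2t_{b,xy}$.

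Finally I would clear this factor by the rescaling $t_{b,x^2}\mapsto e^{\pi\sqrt{-1}/4}\,t_{b,x^2}$, $t_{b,y^2}\mapsto e^{-\pi\sqrt{-1}/4}\,t_{b,y^2}$, leaving $t_J,t_{aJ},t_{a^2 J},t_{b,xy}$ fixed. Since it fixes the product $t_{b,x^2}t_{b,y^2}$ and all other pairings read off the cubic terms, it preserves $\eta$ and therefore is realized as a constant, degree--zero element of the lower--triangular group, i.e.\ a genuine $\hat S$--action of Givental (exactly in the sense used in the proofs of Propositions~\ref{proposition: primary potential of W,G_2} and \ref{prop: G_2 two potentials together}). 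Under it the monomials $t_{aJ}t_{b,xy}t_{b,y^2}^2$ and $t_{aJ}t_{b,x^2}^2t_{b,xy}$ acquire the factors $e^{-\pi\sqrt{-1}/2}=-\sqrt{-1}$ and $e^{\pi\sqrt{-1}/2}=\sqrt{-1}$ respectively, which absorb the $\sqrt{-1}$ of the coefficient, while the quartics $t_{b,x^2}^4$, $t_{b,y^2}^4$, $t_{b,x^2}^2t_{b,y^2}^2$ acquire only $-1,-1,1$ and so remain rational; thus $\hat S\cdot F_0^{(\tilde E_7,G_3)}\in\QQ[[\bt]]$. The main obstacle is precisely this last piece of bookkeeping: one must exhibit a rescaling that is simultaneously $\eta$--orthogonal (to qualify as an S--action) and absorbs the $\sqrt{-1}$ without introducing new irrationalities elsewhere. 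This is feasible only because the irrationality is confined to that single $\sqrt{-1}$ and because the balanced choice $\lambda_{b,x^2}\lambda_{b,y^2}=1$ keeps the pairing intact while distributing the needed imaginary factors into the broad variables.
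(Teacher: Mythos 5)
Your proposal is correct. The first half coincides with the paper's (the paper simply says the explicit formula ``follows immediately from the preceding sections'', i.e.\ from substituting the unique admissible WDVV solution~\eqref{eq: Example3 fjr} with $c=1$ into the relations~\eqref{eq: Example3 comp cond}), so there is nothing to compare there. For the rationality statement, however, you take a genuinely different route. The paper never touches the broad variables individually: it performs the substitution $X_a^{(\tau_0,\omega_0)}(t)\to\sqrt{-1}\,X_a^{(\tau_0,\omega_0)}(\sqrt{-1}\,t)$, i.e.\ it replaces $\omega_0$ by $\omega_1=e^{-\pi\sqrt{-1}/2}\omega_0$ via a rescaling involving $t_{a^2J}$ itself, and then concludes rationality from the parity of the building blocks ($g_a$, $f_{1,1}$ odd and hence sent into $\QQ[[t]]$; $h_a$ even and in $\sqrt{-1}\,\QQ[[t^2]]$, hence sent into $\QQ[[t^2]]$). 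You instead leave $t_{a^2J}$ and the functions $X_k^{(\tau_0,\omega_0)}$ untouched, localize the irrationality in the single overall factor $\sqrt{-1}/4$ of the square root (your expansion $(X_3^{(\tau_0,\omega_0)}-X_2^{(\tau_0,\omega_0)})(X_3^{(\tau_0,\omega_0)}-X_4^{(\tau_0,\omega_0)})=-\tfrac1{16}+\mathrm{O}(t)$ is right), and absorb it by the balanced twist $t_{b,x^2}\mapsto e^{\pi\sqrt{-1}/4}t_{b,x^2}$, $t_{b,y^2}\mapsto e^{-\pi\sqrt{-1}/4}t_{b,y^2}$; the sign checks on the quartic monomials all come out as you state, and the condition $\lambda_{b,x^2}\lambda_{b,y^2}=1$ does make this a constant $\eta$--orthogonal, hence legitimate, $S$--element. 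What your version buys is a strictly pairing--preserving diagonal matrix that does not disturb $t_{a^2J}$ or the modular functions at all; what the paper's version buys is conceptual uniformity --- the same trick $X\mapsto\sqrt{-1}X(\sqrt{-1}\,t)$ is reused verbatim from Proposition~\ref{proposition: primary potential of W,G_2} and admits the interpretation as a change $\omega_0\to\omega_1$ of the $\A^{(\tau_0,\omega_0)}$--parameter, which is then exploited again in the CY/LG part. Both arguments are sound.
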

\begin{proof}
  The first part follows immediately from the preceding sections.
  
  From the explicit series expansions of the functions $X_a^{(\tau_0,\omega_0)}$ we have:
  $$
    g_1(t), \ \dots ,\ g_7(t) \in \QQ[[t]], \quad f_{1,1}(t) \in \QQ[[t]],
  $$
  and
  $$
    h_1(t),h_2(t) \in \sqrt{-1} \QQ[[t^2]].
  $$
  Hence we see that $F_0^{(\tilde E_7, G_3)} \not \in \QQ[[\bt]]$.
  
  Consider the rescaling $X_a^{(\tau_0,\omega_0)}(t) \to \sqrt{-1} X_a^{(\tau_0,\omega_0)}(\sqrt{-1} t)$, that can be easily realized as a scaling of the variables, preserving the cubic terms. Note also that we have the relations 
  \[
  \sqrt{-1} X_a^{(\tau_0,\omega_0)}(\sqrt{-1} t) = X_a^{(\tau_0, \omega_1)}(t) 
  \]
  for $\omega_1 := \exp(-\pi\sqrt{-1}/2) \omega_0$, that is equivalent to the rescaling discussed. We get:
  $$
    g_a(t) \to \sqrt{-1} g_a \left(\sqrt{-1} t \right) \in \QQ[[t]], \quad f_{1,1}(t) \to \sqrt{-1} f_{1,1} \left( \sqrt{-1} t \right) \in \QQ[[t]],
  $$
  because these functions are odd, and
  $$
    h_a(t) \to \sqrt{-1} h_a(\sqrt{-1}t) \in \QQ[[t^2]],
  $$
  because these functions are even.
\end{proof}

\subsubsection{CY/LG correspondence}
Note that all three solutions from Eq.~\eqref{eq: Example3 wdvv solutions} to the WDVV equation~\eqref{eq: Example3 wdvv} differ just by the permutations of the functions $X_a^{(\tau_0, \omega_0)}$. All three solutions give some genus zero primary CohFT potentials, but only one of them is indeed a FJRW--theory genus zero primary potential as we have shown above. 

Denote the genus zero primary potential of the third WDVV solution --- Eq.~\eqref{eq: Example3 auxilary} by $F^{aux}_0$.
We identify this potential with the $\A^{(\tau_1,\omega_1)}$--transformed potential of $F_0^{\PP^1_{2,2,2,2}}$. Then Lemma~\ref{lemma: Xk permutation} gives the CY/LG correspondence action.

\subsubsection{Computation of $F_0^{aux}$}
Comparing to the previously computed FJRW theories here we also make use of Lemma~\ref{lemma: double argument of XA} and Lemma~\ref{lemma: X_k scaling}. 
We get:
\begin{align*}
    4 \cdot 32 \cdot g_7(4t) & =  -\left( 4 X_4^\infty \left(4 t \right) \right)^{(\tau_0,\omega_0)} = -\frac{1}{2} \left( (2 X_3\left(2t\right) )^{(\tau_0,\omega_0)} + ( 2 X_4\left(2t\right))^{(\tau_0,\omega_0)} \right)
    \\
    &= -\frac{1}{2} \left(X_3^{(\tau_1,\omega_1)}\left(t\right) + X_4^{(\tau_1,\omega_1)}\left(t\right) \right)
\end{align*}
where $\tau_1 = 2\tau_0$ and $\omega_1 = \omega_0/\sqrt{2}$. Similarly we have:
\begin{align*}
    & 4 \cdot 12288 c \cdot g_5(4t) =  \left(4 X_2^\infty \left(4 t \right) \right)^{(\tau_0,\omega_0)} + \left( 4 X_3^\infty \left( 4 t \right) \right)^{(\tau_0,\omega_0)} - 2 \left( 4 X_4^\infty \left(4 t \right) \right)^{(\tau_0,\omega_0)} 
    \\
    & \quad\quad = 2 X_2^{(\tau_1,\omega_1)} \left(t\right) - X_3 ^{(\tau_1,\omega_1)} \left(t\right) - X_4^{(\tau_1,\omega_1)} \left(t\right),
   \\
    & 4 \cdot 128 \sqrt{c} \cdot h_2(4t) =  
    \\
    & \sqrt{\left( \left( 4 X_2^\infty (4 t) \right)^{(\tau_0,\omega_0)} - \left( 4 X_4^\infty \left(4 t \right) \right)^{(\tau_0,\omega_0)} \right) \left( \left( 4 X_3^\infty (4 t) \right)^{(\tau_0,\omega_0)} - \left( 4 X_4^\infty \left(4 t \right) \right)^{(\tau_0,\omega_0)} \right)}
    \\
    & \quad\quad = \frac{1}{2} \left( X_3^{(\tau_1,\omega_1)} \left(t\right) - X_4^{(\tau_1,\omega_1)} \left(t\right)  \right).
\end{align*}
Applying the following linear change of the variables:
\begin{align*}
  & t_J = t_0, \ t_{a^2 J}= \tau
  \\
  & t_{aJ} = \frac{1}{2} \left(t_1-t_3\right), \ t_{b,x^2}= 2 t_2 + 2 \sqrt{-1} t_4, \ t_{b,xy} = 2 \left(t_1 + t_3\right), \ t_{b,y^2}= 2 t_2 - 2\sqrt{-1} t_4.
\end{align*}
to the potential $F_0^{aux}$ we get:
\begin{align*}
  F_0^{aux}(\bt) &= \frac{1}{2} t_0^2 \tau +\frac{1}{4} t_0 \sum_{k=2}^5 t_k^2  
  -\frac{1}{64} \left(t_3^2 t_4^2 + t_1^2 t_2^2 \right) X_4^{(\tau_1,\omega_1)} \left(\frac{\tau}{4}\right) -\frac{1}{64} \left(t_1^2 t_3^2 + t_2^2 t_4^2 \right) X_2^{(\tau_1,\omega_1)} \left(\frac{\tau}{4}\right)
  \\
  &  - \frac{1}{64} \left( t_2^2 t_3^2 + t_1^2 t_4^2 \right) X_3^{(\tau_1,\omega_1)} \left(\frac{\tau}{4}\right)  -\frac{1}{4\cdot 96} \left( \sum_{k=2}^5 t_k^4 \right) \left( \sum_{l=2}^4 X_l ^{(\tau_1,\omega_1)} \left(\frac{\tau}{4}\right) \right).
\end{align*}
Applying again Lemma~\ref{lemma: X_k scaling} we have for $\tau_2 = \tau_1/4$ and $\omega_2 = 2\omega_1$:
\begin{align*}
  F_0^{aux}(\bt) &= \frac{1}{2} t_0^2 \tau +\frac{1}{4} t_0 \sum_{k=2}^5 t_k^2  
  -\frac{1}{16} \left(t_3^2 t_4^2 + t_1^2 t_2^2 \right) X_4^{(\tau_2,\omega_2)} \left(\tau\right) -\frac{1}{16} \left(t_1^2 t_3^2 + t_2^2 t_4^2 \right) X_2^{(\tau_2,\omega_2)} \left(\tau\right)
  \\
  &  - \frac{1}{16} \left( t_2^2 t_3^2 + t_1^2 t_4^2 \right) X_3^{(\tau_2,\omega_2)} \left(\tau\right)  -\frac{1}{96} \left( \sum_{k=2}^5 t_k^4 \right) \left( \sum_{l=2}^4 X_l ^{(\tau_2,\omega_2)} \left(\tau\right) \right).
\end{align*}
Therefore, for $\tau_3 = 1 + \tau_0/2$ and $\omega_3 = \sqrt{2} \omega_0 $ holds:
$$
  F_0^{(\tilde E_7, G_3)}(\tilde \bt(\bt))  = \A^{(\tau_3,\omega_3)} \cdot F_{an}^{\PP^1_{2,2,2,2}},
$$
In order to derive the equality for the potential $F_0^{\PP^1_{2,2,2,2}}$ we apply now Proposition~\ref{proposition: sl Fan to F}. We have got:

\begin{proposition}\label{proposition: G_3 explicit}
For the linear change of the variables as above holds:
\[
  F_0^{(\tilde E_7, G_3)}(\tilde \bt(\bt))  = A^{G_3} \cdot F_0^{\PP^1_{2,2,2,2}}, \quad 
  A^{G_3} := 
    \begin{pmatrix}
	\dfrac{2\sqrt{-1}+1}{2\Theta} &  \pi \Theta \left(\sqrt{-1} - \dfrac{1}{2} \right)
	\\
	\dfrac{1}{\pi \Theta} & \Theta
      \end{pmatrix}
\]
for $\Theta = \sqrt{2\pi}/\left(\Gamma(\frac{3}{4})\right)^2$.
\end{proposition}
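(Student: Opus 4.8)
The starting point is the identity established immediately above the statement, namely
\[
F_0^{(\tilde E_7, G_3)}(\tilde \bt(\bt)) = \A^{(\tau_3,\omega_3)} \cdot F_{an}^{\PP^1_{2,2,2,2}}, \quad \tau_3 = 1 + \tau_0/2, \ \omega_3 = \sqrt{2}\,\omega_0,
\]
which compares the FJRW potential with the \emph{analytic} potential rather than with the genuine genus zero GW potential $F_0^{\PP^1_{2,2,2,2}}$. The plan is to feed this into Proposition~\ref{proposition: sl Fan to F} in order to exchange $F_{an}^{\PP^1_{2,2,2,2}}$ for $F_0^{\PP^1_{2,2,2,2}}$, and then to simplify the resulting $\SL2C$ matrix down to the asserted form of $A^{G_3}$.

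First I would apply Proposition~\ref{proposition: sl Fan to F} with $k=2$ to $A = \A^{(\tau_3,\omega_3)}$. By that proposition the entire content reduces to identifying $A^{G_3}$ with the rescaled matrix $A'$, whose entries are obtained from those of $\A^{(\tau_3,\omega_3)}$ by multiplying the top row by $\lambda_2 = \sqrt{\pi\sqrt{-1}}$ and the bottom row by $\lambda_2^{-1}$. No new geometry enters at this stage; everything comes down to the arithmetic of $(\A^{(\tau_3,\omega_3)})'$.

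Second, I would carry out that arithmetic. Substituting $\tau_0 = \sqrt{-1}$ gives $\tau_3 = 1 + \sqrt{-1}/2$, hence $\bar\tau_3 = 1 - \sqrt{-1}/2$ and ${\rm Im}(\tau_3) = 1/2$; and using $\omega_0 = \lambda_4\Theta$ together with $\lambda_4 = \lambda_2/\sqrt{2}$ gives $\omega_3 = \sqrt{2}\,\omega_0 = \lambda_2\Theta$. Plugging these into the definition of $\A^{(\tau_0,\omega_0)}$ and then applying the row rescaling, the factors of $\lambda_2$ combine via $\lambda_2^2 = \pi\sqrt{-1}$, and I expect the four entries to collapse to
\[
a\lambda_2 = \frac{2\sqrt{-1}+1}{2\Theta}, \quad b\lambda_2 = \pi\Theta\Big(\sqrt{-1} - \tfrac12\Big), \quad \frac{c}{\lambda_2} = \frac{1}{\pi\Theta}, \quad \frac{d}{\lambda_2} = \Theta,
\]
which is exactly $A^{G_3}$. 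The $(1,2)$ entry is the only one requiring a small simplification, namely $\sqrt{-1}\cdot\sqrt{-1}/2 = -1/2$; the others are immediate cancellations of $\lambda_2$ against $\lambda_2^{-1}$ in $\omega_3 = \lambda_2\Theta$.

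The genuine subtlety is not in this last computation but in the \emph{input} identity, that is, in pinning down $\tau_3$ and $\omega_3$ themselves. That step rests on Lemma~\ref{lemma: Xk permutation}: the true FJRW solution Eq.~\eqref{eq: Example3 fjr} and the auxiliary solution Eq.~\eqref{eq: Example3 auxilary} differ only by a permutation of $\{X_2,X_3,X_4\}$, and one must show this permutation is realized by a definite modular element, so that the genuine FJRW potential remains an $\A$--transform of $F_{an}^{\PP^1_{2,2,2,2}}$, only at a shifted point $(\tau_3,\omega_3)$ instead of the $(\tau_2,\omega_2)$ appearing for $F_0^{aux}$. The main care is therefore in checking that this permutation composes correctly with the double--argument and scaling operations of Lemmas~\ref{lemma: double argument of XA} and~\ref{lemma: X_k scaling}, so as to produce precisely $\tau_3 = 1+\tau_0/2$ and $\omega_3 = \sqrt{2}\,\omega_0$; once these are verified, the passage to $A^{G_3}$ is the routine matrix computation displayed above.
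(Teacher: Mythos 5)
Your proposal is correct and follows exactly the paper's route: the paper likewise takes the identity $F_0^{(\tilde E_7,G_3)}(\tilde\bt(\bt)) = \A^{(\tau_3,\omega_3)}\cdot F_{an}^{\PP^1_{2,2,2,2}}$ (obtained from the $F_0^{aux}$ computation via Lemmas~\ref{lemma: double argument of XA}, \ref{lemma: X_k scaling} and \ref{lemma: Xk permutation}) and simply applies Proposition~\ref{proposition: sl Fan to F} with $k=2$, the matrix $A^{G_3}$ being $(\A^{(\tau_3,\omega_3)})'$. Your evaluation of the entries (using $\omega_3=\lambda_2\Theta$, $\Im(\tau_3)=1/2$, $\lambda_2^2=\pi\sqrt{-1}$) checks out, and you correctly locate the actual content of the argument in the preceding identification of $(\tau_3,\omega_3)$.
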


\appendix

\section{Some formulae on the theta constants}\label{appendix A}
  The Jacobi theta constants have the following connection to the Fourier series $\psi_k(q)$, $k=2,3,4$ of Section~\ref{section: GW theory}:
  \begin{equation*}
    \left(\vartheta_2(q)\right)^4 = 2 (\psi_3(q) - \psi_4(q)), \ \left(\vartheta_3(q)\right)^4 = 2 (\psi_2(q) - \psi_4(q)), \ \left(\vartheta_4(q)\right)^4 = 2 (\psi_2(q) - \psi_3(q)).
  \end{equation*}
  Note that these equalities are not enough to express $\psi_k(q)$ via the theta constants. We also have the following double argument formulae:
  \begin{align*}
    \left( \vartheta_2(q^2) \right)^2 = & \frac{1}{2} \left( \left(\vartheta_3(q) \right)^2 - \left(\vartheta_4(q) \right)^2 \right),
    \quad
    \left( \vartheta_3(q^2) \right)^2 = \frac{1}{2} \left( \left(\vartheta_3(q) \right)^2 + \left(\vartheta_4(q) \right)^2 \right),
    \\
    & \quad \left( \vartheta_4(q^2) \right)^2 = \vartheta_3(q)\vartheta_4(q).
  \end{align*}
  Combining these formulae with the definition of the functions $X_k^\infty(q)$ we get:
  \begin{equation*}
  \begin{aligned}
    2 X_2^\infty (q^2)  &= \frac{X_3^\infty(q) \left(\vartheta_3(q)\right)^2 -X_4^\infty(q) \left(\vartheta_4(q)\right)^2}{\left(\vartheta_3(q)\right)^2-\left(\vartheta_4(q)\right)^2},
    \\
    2 X_3^\infty (q^2)  &= \frac{X_3^\infty(q) \left(\vartheta_3(q)\right)^2 +X_4^\infty(q) \left(\vartheta_4(q)\right)^2}{\left(\vartheta_3(q)\right)^2+\left(\vartheta_4(q)\right)^2},
    \\
    2 X_4^\infty (q^2)  &= \frac{1}{2}\left(X_3^\infty(q) + X_4^\infty(q)\right).
  \end{aligned}
  \end{equation*}

  The following lemma is only applicable to the scaling of $\tau$ by $2$ and uses double argument formulae of the theta constants.
  \begin{lemma}\label{lemma: double argument of XA}
    For any $A \in \mathrm{SL}(2,\CC)$ we have the following equalities:
    \begin{align*}
      & \left(2 X_2(2 \tau)\right)^A = \frac{X_3^A(\tau) T_3^A(\tau) - X_4^A(\tau) T_4^A(\tau)}{ T_3^A(\tau) - T_4^A(\tau)},
      \\
      & \left(2 X_3(2 \tau)\right)^A = \frac{X_3^A(\tau) T_3^A(\tau) + X_4^A(\tau) T_4^A(\tau)}{ T_3^A(\tau) + T_4^A(\tau)},
      \\
      & \left(2 X_4(2 \tau)\right)^A = \frac{1}{2}\left(X_3^A(\tau) + X_4^A(\tau) \right),
    \end{align*}
    where
    $$
      T_k^A(\tau) := \frac{1}{c \tau + d} \left( \vartheta_k \left(\frac{a \tau + b}{c \tau + d} \right) \right)^2, \quad k=2,3,4.
    $$
  \end{lemma}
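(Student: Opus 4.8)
The plan is to deduce the three identities by \emph{conjugating} the untwisted double argument formulae --- the three displayed expressions for $2X_k^\infty(2\tau)$ immediately preceding the statement --- by the $\SL2C$-action of Eq.~\eqref{eq: SL action on Halp}. Throughout I abbreviate $A\tau := \frac{a\tau+b}{c\tau+d}$ and recall $\det A = 1$. The symbol $\left(2X_k(2\tau)\right)^A$ denotes the action applied to the function $\tau \mapsto 2X_k^\infty(2\tau)$; since the rescaled triple $\{2X_k^\infty(2\tau)\}_k$ again solves the Halphen system Eq.~\eqref{eq: Halp in tau} (a one-line check using $\tau\mapsto2\tau$ homogeneity), this is legitimate and is computed by the same formula,
\begin{equation*}
  \left(2X_k(2\tau)\right)^A = \frac{1}{(c\tau+d)^2}\,2\,X_k^\infty(2A\tau) + \frac{c}{c\tau+d}.
\end{equation*}

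First I would record two dictionary relations converting untwisted data at $A\tau$ into twisted data at $\tau$. Inverting Eq.~\eqref{eq: SL action on Halp} gives
\begin{equation*}
  X_k^\infty(A\tau) = (c\tau+d)^2\,X_k^A(\tau) - c(c\tau+d),
\end{equation*}
and directly from the definition of $T_k^A$ one has
\begin{equation*}
  \left(\vartheta_k(A\tau)\right)^2 = (c\tau+d)\,T_k^A(\tau).
\end{equation*}
The second is purely definitional; the first is the only place the affine anomaly $\frac{c}{c\tau+d}$ enters.

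Next I would substitute $s := A\tau$ into each untwisted double argument formula, so that its left side becomes exactly $2X_k^\infty(2A\tau)$ while its right side is a rational expression in $X_j^\infty(A\tau)$ and $\left(\vartheta_j(A\tau)\right)^2$. Feeding in the two dictionary relations and then the overall action formula above produces the claim. For the simplest case $k=4$ there are no theta factors: one gets $\left(2X_4(2\tau)\right)^A = \frac12\bigl(X_3^A(\tau)+X_4^A(\tau)\bigr)$, the two shifts $-\frac{c}{c\tau+d}$ arising from $X_3^\infty(A\tau)$ and $X_4^\infty(A\tau)$ combining with the global $+\frac{c}{c\tau+d}$ to cancel exactly. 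For $k=2,3$ the substitution $\left(\vartheta_j(A\tau)\right)^2 = (c\tau+d)T_j^A$ contributes a common factor $(c\tau+d)$ to numerator and denominator, which cancels in the ratio and turns $(\vartheta_j)^2$ into $T_j^A$; the affine part $-c(c\tau+d)^2\bigl(T_3^A \mp T_4^A\bigr)$ of the numerator, divided by the denominator $(c\tau+d)\bigl(T_3^A \mp T_4^A\bigr)$ and by the global factor $(c\tau+d)^{-2}$, yields a single $-\frac{c}{c\tau+d}$ that again cancels the global $+\frac{c}{c\tau+d}$. This leaves precisely the two ratio identities.

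The computation is otherwise routine, and the only point demanding care --- the main (mild) obstacle --- is the bookkeeping of the weight-$(c\tau+d)$ automorphy factors together with the verification that the additive anomaly $\frac{c}{c\tau+d}$ cancels in all three cases. Conceptually this cancellation is forced: each right-hand side is a ratio of quantities of equal $(c\tau+d)$-weight (hence effective weight zero), and $X^A$ carries exactly the affine shift that the ratio structure absorbs. I would therefore write out the $k=4$ case in full and note that $k=2,3$ are identical up to the signs in $T_3^A \mp T_4^A$ and $X_3^A T_3^A \mp X_4^A T_4^A$. If one prefers not to invoke the displayed untwisted identities, they can be re-derived in the $\tau$ variable by applying $2\partial_\tau\log$ to the theta double argument formulae, using $\partial_\tau(A\tau)=(c\tau+d)^{-2}$ and $X_k^\infty = 2\partial_\tau\log\vartheta_k$.
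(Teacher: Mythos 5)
Your proposal is correct and follows essentially the same route as the paper's proof: apply the $\SL2C$-action formula to the Halphen solution $\{2X_k^\infty(2\tau)\}_k$, insert the untwisted double argument identities at $\tau'=A\tau$, and absorb the affine shift $\frac{c}{c\tau+d}$ into the numerator via $X_j^\infty(\tau')+c(c\tau+d)=(c\tau+d)^2X_j^A(\tau)$ and $\vartheta_j^2(\tau')=(c\tau+d)T_j^A(\tau)$. The only cosmetic difference is that the paper writes out the $k=2$ case in full while you write out $k=4$; the bookkeeping of automorphy factors and the anomaly cancellation you describe match the paper's computation exactly.
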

  \begin{proof}
    First of all note that we can not apply $A$ to the function $X_k^\infty(2 \tau)$ because the latter one doesn't solve the Halphen's system. Let's apply it to $2 X_k^\infty(2 \tau)$. We only do it in one example, while all the other are similar. Let:
    $$      
      A = \begin{pmatrix}
           a & b \\ c & d
          \end{pmatrix} \in \SL2C, \quad \text{ and } \quad \tau' := \frac{a \tau + b}{c \tau + d}.
    $$
    Using the double argument formula for $X_2^\infty$ above we have:
    \begin{align*}
      \left( 2 X_2^\infty (2 \tau) \right)^A &= \frac{1}{(c \tau + d)^2} \cdot 2 X_2^\infty \left( 2 \frac{a \tau + b}{c \tau + d} \right) + \frac{c}{c \tau + d},
      \\
      &= \frac{1}{(c \tau + d)^2} \frac{X_3^\infty (\tau') \vartheta_3^2 (\tau') - X_4^\infty (\tau') \vartheta_4^2(\tau')}{\vartheta_3^2(\tau') - \vartheta_4^2(\tau')} + \frac{c}{c \tau + d}
      \\
      &= \frac{\left[ X_3^\infty (\tau')  + c(c \tau + d) \right]\vartheta_3^2 (\tau')  - \left[ X_4^\infty (\tau') + c(c \tau + d) \right]\vartheta_4^2(\tau') }{(c \tau + d)^2(\vartheta_3^2(\tau') - \vartheta_4^2(\tau'))}.
    \end{align*}
    The other two cases are treated in the same way.
  \end{proof}
  For a more general scaling we have.
  \begin{lemma}\label{lemma: X_k scaling}
    For any $\tau_0 \in \HH$, $\omega_0 \in \CC^*$ and $k \in \QQ_{> 0}$ holds:
    $$
       \left( k X_a^\infty(k \tau) \right) ^{(\tau_0,\omega_0)} = \left( X_a^\infty(\tau) \right)^{(\tau_1,\omega_1)}, \quad 2 \le a \le 4,
    $$
    where $\tau_1 = k \tau_0, \ \omega_1 = \omega_0 / \sqrt{k}$.
  \end{lemma}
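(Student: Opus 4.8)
The plan is to reduce the statement to the explicit formula Eq.~\eqref{eq: SL action on Halp} for the $\SL2C$--action on Halphen's system, together with a single matrix identity. First I would check that the left hand side is well--posed: since $\{X_a^\infty(\tau)\}$ solves the Halphen system Eq.~\eqref{eq: Halp in tau}, the rescaled triple $Y_a(\tau) := k X_a^\infty(k\tau)$ solves it too. Indeed $Y_a'(\tau) = k^2 (X_a^\infty)'(k\tau)$, while each quadratic combination on the right hand side of Eq.~\eqref{eq: Halp in tau} also picks up exactly a factor $k^2$ when evaluated on $\{Y_a\}$, so the two sides still match. Hence $\left( k X_a^\infty(k\,\cdot) \right)^{(\tau_0,\omega_0)}$ is, by the $\SL2C$--action on the space of Halphen solutions, the image of $\{Y_a\}$ under $\A^{(\tau_0,\omega_0)}$ computed via Eq.~\eqref{eq: SL action on Halp}.

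The heart of the argument is then to express $\A^{(\tau_1,\omega_1)}$, with $\tau_1 = k\tau_0$ and $\omega_1 = \omega_0/\sqrt{k}$, in terms of the entries $a,b,c,d$ of $\A^{(\tau_0,\omega_0)}$. Here I would use crucially that $k \in \QQ_{>0}$ is real, so that ${\rm Im}(\tau_1) = k\,{\rm Im}(\tau_0)$ and $\bar\tau_1 = k\bar\tau_0$. Substituting these into the explicit entries of $\A^{(\tau_1,\omega_1)}$ and simplifying yields
$$
  \A^{(\tau_1,\omega_1)} = \begin{pmatrix} \sqrt{k}\,a & \sqrt{k}\,b \\ c/\sqrt{k} & d/\sqrt{k} \end{pmatrix} = \begin{pmatrix} \sqrt{k} & 0 \\ 0 & 1/\sqrt{k} \end{pmatrix} \A^{(\tau_0,\omega_0)},
$$
which again lies in $\SL2C$ since the diagonal prefactor has determinant $1$.

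Finally I would feed this matrix into Eq.~\eqref{eq: SL action on Halp} applied to $X_a^\infty$. Writing $a_1,b_1,c_1,d_1$ for the entries of $\A^{(\tau_1,\omega_1)}$, one reads off $c_1\tau + d_1 = (c\tau+d)/\sqrt{k}$, hence the Möbius argument becomes $(a_1\tau+b_1)/(c_1\tau+d_1) = k\,(a\tau+b)/(c\tau+d)$, the multiplicative prefactor becomes $(c_1\tau+d_1)^{-2} = k/(c\tau+d)^2$, and the additive shift is unchanged, $c_1/(c_1\tau+d_1) = c/(c\tau+d)$. Therefore $\left( X_a^\infty \right)^{(\tau_1,\omega_1)}(\tau)$ equals
$$
  \frac{k}{(c\tau+d)^2}\,X_a^\infty\!\left( k\,\frac{a\tau+b}{c\tau+d} \right) + \frac{c}{c\tau+d},
$$
which is exactly the value of $\left( k X_a^\infty(k\,\cdot) \right)^{(\tau_0,\omega_0)}(\tau)$ produced by Eq.~\eqref{eq: SL action on Halp} applied to $\{Y_a\}$, for every $2 \le a \le 4$. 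The only genuine obstacle is the matrix identity of the middle paragraph; once the reparametrization $(\tau_0,\omega_0)\mapsto(k\tau_0,\omega_0/\sqrt{k})$ is recognized as left multiplication by $\mathrm{diag}(\sqrt{k},1/\sqrt{k})$, the remaining steps are routine bookkeeping with the action formula.
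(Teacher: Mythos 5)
Your proof is correct and follows essentially the same route as the paper: one first checks that $k X_a^\infty(k\tau)$ is again a Halphen solution, and then verifies the identity by direct computation with the explicit action formula. Your packaging of the key step as the matrix identity $\A^{(k\tau_0,\,\omega_0/\sqrt{k})} = \mathrm{diag}(\sqrt{k},1/\sqrt{k})\,\A^{(\tau_0,\omega_0)}$ (valid since $k$ is real, so $\overline{\tau_1}=k\bar\tau_0$ and ${\rm Im}(\tau_1)=k\,{\rm Im}(\tau_0)$) is just a cleaner organization of the same entrywise computation the paper carries out.
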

  \begin{proof}
    First of all note that the formula given makes sense. Namely, the triple of functions $k X_a^\infty(k\tau)$ is solution of the Halphen's system too. The rest follows from the following equalities.
    \begin{align*}
      \left( k X_a^\infty(k \tau) \right) & ^{(\tau_0,\omega_0)} = \A^{(\tau_0,\omega_0)} \cdot \left( k X_a^\infty(k \tau) \right) 
      \\
      & = k \frac{(2 \omega_0 \Im(\tau_0))^2}{(\sqrt{-1}\tau + 2 \omega_0^2 \Im(\tau_0))^2} X_a^\infty \left( k \cdot \frac{\sqrt{-1}\tau \bar\tau_0 + \tau_0 \cdot 2 \omega_0^2 \Im(\tau_0)}{\sqrt{-1}\tau + 2 \omega_0^2 \Im(\tau_0)} \right) 
      \\
      & \quad - \frac{1}{\tau - 2 \sqrt{-1} \omega_0^2 \Im(\tau_0)} 
      = \A^{(\tau_1,\omega_1)} \cdot \left( X_a^\infty(\tau) \right).
    \end{align*}
  \end{proof}

  \begin{lemma}\label{lemma: Xk permutation}
    For any $\tau_0 \in \HH$ and $\omega_0 \in \CC^*$ holds:
    $$
      X_2^{(\tau_0,\omega_0)}(t) = X_2^{(\tau_1,\omega_0)}(t), \ X_3^{(\tau_0,\omega_0)}(t) = X_4^{(\tau_1,\omega_0)}(t), \ X_4^{(\tau_0,\omega_0)}(t) = X_3^{(\tau_1,\omega_0)}(t),
    $$
    for $\tau_1 := \tau_0 + 1$.
  \end{lemma}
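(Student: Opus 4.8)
The plan is to reduce the claimed permutation identity to the classical behaviour of the theta constants under $\tau\mapsto\tau+1$, transported through the $\SL2C$--action of Eq.~\eqref{eq: SL action on Halp}. Writing $\tau_1:=\tau_0+1$, the first step is to record the matrix identity
$$
  \A^{(\tau_1,\omega_0)} = \begin{pmatrix} 1 & 1 \\ 0 & 1 \end{pmatrix}\cdot \A^{(\tau_0,\omega_0)},
$$
which follows at once from $\Im(\tau_1)=\Im(\tau_0)$ and $\overline{\tau_1}=\bar\tau_0+1$: adding the second row of $\A^{(\tau_0,\omega_0)}$ to its first row replaces $\bar\tau_0$ by $\bar\tau_0+1$ in the $(1,1)$--entry and $\tau_0$ by $\tau_0+1$ in the $(1,2)$--entry, while the second row is untouched. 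Both matrices lie in $\SL2C$, since a direct computation gives $\det\A^{(\tau_0,\omega_0)}=\sqrt{-1}(\bar\tau_0-\tau_0)/(2\Im(\tau_0))=1$.

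The second ingredient is that the assignment $Y\mapsto Y^A$ of Eq.~\eqref{eq: SL action on Halp}, read for an arbitrary Halphen solution $Y=(Y_2,Y_3,Y_4)$ rather than only for the $X_k^\infty$, is a \emph{right} action of $\SL2C$, i.e.\ $(Y^A)^B=Y^{AB}$. I would verify this by the standard cocycle computation for the automorphy factor $j(M,\tau):=c_M\tau+d_M$, which satisfies $j(AB,\tau)=j(A,B\tau)\,j(B,\tau)$. Under composition the weight--$2$ parts collapse to $(c_{AB}\tau+d_{AB})^{-2}\,Y_k((AB)\tau)$, and the two affine correction terms
$$
  \frac{c_A}{(c_B\tau+d_B)(c_{AB}\tau+d_{AB})} + \frac{c_B}{c_B\tau+d_B} = \frac{c_{AB}}{c_{AB}\tau+d_{AB}}
$$
add up correctly precisely because $\det B=1$ (the identity above reduces, after clearing denominators, to $a_Bd_B-b_Bc_B=1$).

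With these two facts in hand, set $T:=\begin{pmatrix}1&1\\0&1\end{pmatrix}$ and $A_0:=\A^{(\tau_0,\omega_0)}$, so $\A^{(\tau_1,\omega_0)}=TA_0$, and the right--action property yields $X_k^{(\tau_1,\omega_0)}=(X_k^\infty)^{TA_0}=\bigl((X_k^\infty)^{T}\bigr)^{A_0}$. Since $T$ has $c=0,\,d=1$, the inner action is simply $(X_k^\infty)^{T}(\tau)=X_k^\infty(\tau+1)$. I then invoke the classical transformations $\vartheta_2(\tau+1)=e^{\pi\sqrt{-1}/4}\vartheta_2(\tau)$, $\vartheta_3(\tau+1)=\vartheta_4(\tau)$, $\vartheta_4(\tau+1)=\vartheta_3(\tau)$, immediate from the Fourier series defining $\vartheta_k$ together with $n^2\equiv n \pmod 2$. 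Applying $2\partial_\tau\log(\cdot)$ annihilates the constant factor $e^{\pi\sqrt{-1}/4}$ and gives $X_2^\infty(\tau+1)=X_2^\infty(\tau)$, $X_3^\infty(\tau+1)=X_4^\infty(\tau)$, $X_4^\infty(\tau+1)=X_3^\infty(\tau)$; that is, $(X_2^\infty)^T=X_2^\infty$, $(X_3^\infty)^T=X_4^\infty$, $(X_4^\infty)^T=X_3^\infty$. Feeding these through the $A_0$--action produces exactly $X_2^{(\tau_1,\omega_0)}=X_2^{(\tau_0,\omega_0)}$, $X_3^{(\tau_1,\omega_0)}=X_4^{(\tau_0,\omega_0)}$ and $X_4^{(\tau_1,\omega_0)}=X_3^{(\tau_0,\omega_0)}$, which is the assertion.

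The only genuinely non--formal step is the cocycle verification that $Y\mapsto Y^A$ composes as a right action; everything else is bookkeeping with known theta identities. I expect this to be the main (though still routine) obstacle, and it is the reason I phrase the action for general Halphen solutions rather than applying Eq.~\eqref{eq: SL action on Halp} only to $X_k^\infty$, so that the intermediate object $(X_k^\infty)^T$ is legitimately a solution to which $A_0$ may be applied.
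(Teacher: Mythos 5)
Your proof is correct and follows essentially the same route as the paper's: the paper also deduces the lemma from the translation identities $X_2^\infty(t+1)=X_2^\infty(t)$, $X_3^\infty(t+1)=X_4^\infty(t)$, $X_4^\infty(t+1)=X_3^\infty(t)$ together with the definition of the $\A^{(\tau_0,\omega_0)}$--action. You merely make explicit what the paper leaves implicit, namely the factorization $\A^{(\tau_0+1,\omega_0)}=\begin{pmatrix}1&1\\0&1\end{pmatrix}\A^{(\tau_0,\omega_0)}$ and the cocycle verification that Eq.~\eqref{eq: SL action on Halp} composes as a group action, both of which check out.
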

  \begin{proof}
    This follows immediately from the identities $X_2^\infty(t+1) = X_2^\infty(t)$, $X_3^\infty(t+1) = X_4^\infty(t)$, $X_4^\infty(t+1) = X_3^\infty(t)$ and the definition of the the $A^{(\tau_0, \omega_0)}$ --action.
  \end{proof}

\section{Gromov--Witten potential of $\PP^1_{4,4,2}$}\label{section: appendix GW}
In order to shorten the formulae let $t_k := t_{1,k}$ for $1 \le k \le 3$, $t_l := t_{2,l-3}$ for $4 \le l \le 6$, $t_7 := t_{3,1}$.
Let 
$x = x(q)$, $y = y(q)$, $z = z(q)$, $w = w(q)$ be as in Section~\ref{section: GW theory}.
The following expression for the genus zero GW potential of $\PP^1_{4,4,2}$ was published in \cite{BP}.

\begingroup
\everymath{\scriptstyle}
\tiny
\begin{align*}
  & F_0^{\PP^1_{4,4,2}} = -\frac{\left(x^6-5 x^4 y^2-5 x^2 y^4+y^6\right) }{4128768}\left(t_3^8+t_6^8\right)+\frac{x y \left(x^4+14 x^2 y^2+y^4\right) }{294912}t_3^2 t_6^2 \left(t_3^4+t_6^4\right)+\frac{z \left(8 x^4+8 y^4+19 z^4\right)}{294912} t_6^3 t_7 t_3^3
  \\
  &+\frac{x \left(x^2+y^2\right)^2 }{73728}\left(t_2 t_3^6+t_5 t_6^6\right)+\frac{y \left(x^2+y^2\right)^2 }{73728}\left(t_3^6 t_5+t_2 t_6^6\right)+\frac{5 x^2 y^2 \left(x^2+y^2\right) }{73728}t_6^4 t_3^4-\frac{\left(x^4-6 x^2 y^2+y^4\right) }{30720}\left(t_1 t_3^5+t_4 t_6^5\right)
  \\
  &-\frac{\left(x^4-3 x^2 y^2\right) }{3072}\left(t_2^2 t_3^4+t_5^2 t_6^4\right)+\frac{\left(3 x^2 y^2-y^4\right) }{3072}\left(t_3^4 t_5^2+t_2^2 t_6^4\right)+\frac{x y z\left(x^2+y^2\right) }{6144}t_3 t_6 \left(t_3^4+t_6^4\right) t_7+\frac{x^2 y \left(x^2+4 y^2\right)}{6144}t_3^2 t_6^2 \left(t_2 t_3^2+t_5 t_6^2\right)
  \\
  &+\frac{x y^2 \left(4 x^2+y^2\right) }{6144}t_3^2 t_6^2 \left(t_3^2 t_5+t_2 t_6^2\right)+\frac{x y \left(x^2+y^2\right) }{1536}\left(t_3^2 t_6^2 \left(t_1 t_3+t_4 t_6\right)+t_2 t_5 \left(t_3^4+t_6^4\right)\right)+\frac{x^2 y^2 }{1536}t_3 t_6 \left(t_3^3 t_4+t_1 t_6^3\right)
  \\
  &+\frac{x z\left(x^2+7 y^2\right) }{1536}t_3 t_6 t_7\left(t_3^2 t_5+t_2 t_6^2\right)+\frac{y z\left(7 x^2+y^2\right)}{1536}t_3 t_6 t_7\left(t_2 t_3^2+t_5 t_6^2\right)+\frac{x y \left(x^2+y^2\right)}{512} t_3^2 t_6^2\left(t_2^2+t_5^2\right) +\frac{x^2 y^2 }{384} \left(t_3^4+t_6^4\right) t_7^2
  \\
  &+\frac{x \left(x^2+y^2\right)}{384} \left(t_1 t_2 t_3^3+t_4 t_5 t_6^3\right)+\frac{y \left(x^2+y^2\right)}{384} \left(t_1 t_3^3 t_5+t_2 t_4 t_6^3\right)+\frac{\left(x^2+y^2\right) z}{384} t_7\left(t_3^3 t_4+t_1 t_6^3\right) +\frac{x^3}{384}\left(t_2^3 t_3^2+t_5^3 t_6^2\right)
  \\
  &+\frac{y^3 }{384} \left(t_3^2 t_5^3+t_2^3 t_6^2\right)-\frac{\left(3 w-x^2+2 y^2\right)}{384} \left(t_2^4+t_5^4\right)+\frac{x y^2 }{128} t_2 t_5 \left(t_3^2 t_5+t_2 t_6^2\right)+\frac{x^2 y }{128} t_2 t_5 \left(t_2 t_3^2+t_5 t_6^2\right)+\frac{x^2 y^2}{128} t_2 t_5 t_6^2 t_3^2
  \\
  &+\frac{x y \left(x^2+y^2\right)}{128} t_6^2 t_7^2 t_3^2+\frac{\left(2 x^2-y^2-3 w\right)}{96} t_7^4+\frac{x y^2}{64} t_3 t_6 \left(t_2 t_3 t_4+t_1 t_5 t_6\right)+\frac{x^2 y}{64} t_3 t_6 \left(t_3 t_4 t_5+t_1 t_2 t_6\right)
  \\
  &+\frac{x y z}{192} t_3 t_6 t_7 \left(3 t_2^2+3 t_1 t_3+3 t_5^2+3 t_4 t_6+4 t_7^2\right)+\frac{z\left(x^2+y^2\right)}{64} t_2 t_5 t_6 t_7 t_3-\frac{\left(w-x^2\right) }{64} \left(2 t_5^2 t_7^2+t_2^2 t_5^2+2 t_2^2 t_7^2\right)
  \\
  &-\frac{\left(2 w-x^2+y^2\right)}{64} \left(t_1^2 t_3^2+t_4^2 t_6^2\right)+\frac{x y^2}{32} \left(t_2 t_7^2 t_3^2+t_5 t_6^2 t_7^2\right)+\frac{x^2 y }{32} \left(t_5 t_7^2 t_3^2+t_2 t_6^2 t_7^2\right)+\frac{x y }{32} \left(2t_1 t_2 t_5 t_3+t_1^2 t_6^2+t_4^2 t_3^2\right)
  \\
  &-\frac{w}{32} \left(t_4 t_5^2 t_6+t_1 t_2^2 t_3\right)+\frac{\left(x^2-y^2-w\right)}{32} \left(t_1 t_5^2 t_3+ t_2^2 t_4 t_6\right)-\frac{\left(w-x^2\right)}{16} \left(t_1 t_7^2 t_3+ t_1 t_4 t_6 t_3+ t_4 t_6 t_7^2\right)+\frac{x y }{16} t_2 t_5\left(t_4t_6+2 t_7^2\right)
  \\
  &+\frac{x z}{16} t_7\left(t_2 t_4 t_3+t_1 t_5 t_6\right)+\frac{y z }{16} t_7 \left(t_3t_4 t_5+t_1 t_2 t_6\right)+\frac{x}{8} \left(t_1^2 t_2+t_4^2 t_5\right)+\frac{y}{8} \left(t_2 t_4^2+ t_1^2 t_5\right)+\frac{z}{4} t_1 t_4 t_7
  \\
  &+\frac{1}{8} t_0 \left(t_2^2+t_5^2+2 t_7^2+2 t_1 t_3+2 t_4 t_6\right)+\frac{1}{2} t_0^2 t_{-1}.
\end{align*}
\endgroup

\end{document}